\numberwithin{equation}{section} 
\numberwithin{figure}{section} 
  \theoremstyle{plain}
  \newtheorem{thm}{Theorem}[section]
  \theoremstyle{plain}
  \theoremstyle{plain}
  \theoremstyle{Remark}
  \newtheorem{rem}[thm]{Remark}
  \theoremstyle{remark}
  \theoremstyle{plain}
  \newtheorem{lem}[thm]{Lemma}
\newcommand{\R}{\mathbb{R}}
\def\com#1{ \hbox{#1}}
\def\<{{\langle }}
\def\>{{\rangle }}
\def\com#1{ \quad\hbox{#1}\quad}
\def\R{\hbox{\bf R}}
\def\Sp{\hbox{\bf S}}
\def\Hi{\hbox{\bf H}}
\def\<{{\langle }}
\def\>{{\rangle }}
\begin{document}

\title[cmc hypersurfaces]{CMC hypersurfaces on Riemannian and Semi-Riemannian manifolds}

\author{ Oscar M. Perdomo }

\date{\today}

\curraddr{Department of Mathematics\\
Central Connecticut State University\\
New Britain, CT 06050\\
}

\email{ perdomoosm@ccsu.edu}

\begin{abstract}

In this paper we generalize the explicit formulas for cmc immersion given in \cite{P} and \cite{P1} of hypersurfaces of Euclidean spaces, spheres and hyperbolic spaces to provide explicit examples of several families of immersions with constant mean curvature and non constant principal curvatures, in semi-riemannian manifolds with constant sectional curvature. In particular, we prove that every $h\in[-1,-\frac{2\sqrt{n-1}}{n})$ can be realized as  the constant curvature of a complete immersion of  $S_1^{n-1}\times \R$ in the $(n+1)$-dimensional de Sitter space $\Sp_1^{n+1}$. We provide 3 types of immersions with cmc in the Minkowski space, 5 types of immersion with cmc in the de Sitter space and 5 types of immersion with cmc in the anti de Sitter space.  At the end of the paper we analyze the families of examples that can be extended to closed hypersurfaces.

\end{abstract}

\subjclass[2000]{58E12, 58E20, 53C42, 53C43}

\maketitle

\section{Introduction and preliminaries}

For any non negative integer $k\le n+2$, we will denote by $\R_k^{n+2}$ the space $\R^{n+2}$ endowed with the metric

$$\<v,w\>=-v_1w_1-\dots-v_kw_k+v_{k+1}w_{k+1}+\dots+v_{n+2}w_{n+2}$$

We also define

$$\Sp_{k}^{n+1}=\{x\in\R_k^{n+2}:\<x,x\>=1 \} $$

and when $k\ge1$,

$$\Hi_{k-1}^{n+1}=\{x\in\R_k^{n+2}:\<x,x\>=-1 \}  $$

with the metric induced by $\R_k^{n+1}$. Notice that $\Hi_0^{n}$, $\Sp_0^n$ and $\R_0^n$ are the $n$-dimensional hyperbolic space, sphere and  the Euclidean space respectively.

In this paper we will modify the explicit formulas for cmc hypersurfaces of the Euclidean space, spheres and hyperbolic spaces given in \cite{P} and \cite{P1} to  provide explicit formulas for cmc hypersurfaces in $\R_k^{n+1}$, $\Hi_k^{n+1}$ and $\Sp_k^{n+1}$. Several of these examples are well known. The references at the end provide part of the history and context of the given examples. Since several of the examples in semi riemannian manifold are given by the same ODE as in the riemannian case, for these examples, the nice geometric interpretations given by Sterling in \cite{SI} hold also true.  The author has been informed that Professor Bingye Wu has classified all (space like) hypersurfaces in (Lorentzian) space forms with constant generalized m-th curvature and two distinct principal curvatures.

\section{CMC hypersurfaces}

Let us start with three basic observations that will be used in all the examples.

\begin{lem}\label{mu}
Let $g:(a,b) \to \R $ be a smooth positive function, $h$ any real number, $c$  a non zero real number  and $n$ an integer greater than $1$. If

$$\lambda=h+g^{-n},\quad \mu=h-(n-1)g^{-n}\com{and} r=\frac{g}{\sqrt{|c|}}$$

then,

$$(n-1)\lambda+\mu=nh \com{and} (r\, \lambda)^\prime=\mu r^\prime$$

\end{lem}

\begin{proof}
First notice that $\lambda-\mu=ng^{-n}$. Then

$$\lambda^\prime=-ng^{-n}\frac{g^\prime}{g}=-(\lambda-\mu)\frac{r^\prime}{r}$$

The equation above implies the lemma
\end{proof}

\begin{lem}\label{solution lemma 1}
Let $q:[a,\infty)\to \R$ be a smooth function such that $q(a)=0$, $\lim_{t\to\infty}\frac{q(t)}{t^2}=\epsilon >0$,  $\, q^\prime(a)>0$ and $q(t)>0$ for all $t>a$. If

$$F:[a,\infty)\to \R \com{is given by} F(t)=\int_a^t\frac{1}{\sqrt{q(\tau)}}d\tau $$

then, $F(t)$ is a well-defined, strictly increasing function and $\lim_{t\to\infty}F(t)=\infty$. Moreover, if $G:[0,\infty)\to [a,\infty)$ is the inverse function of $F$, then, $g:(-\infty,\infty)\to [a,\infty)$ given by

$$g(t)=G(t)\quad\com{if}t\ge0 \com{and} g(t)=G(-t)\quad\com{if}t<0 $$

is a solution of the ordinary differential equation $(g^\prime(t))^2=q(g(t))$.

\end{lem}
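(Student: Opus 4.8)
The plan is to treat the three assertions about $F$ in order and then transfer the information to $g$ through the inverse function theorem, paying special attention to the two points where $q$ vanishes.

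First I would establish that $F$ is well defined. The only difficulty is the improper behavior of the integrand at the lower endpoint $\tau=a$, where $q(a)=0$ forces $1/\sqrt{q(\tau)}$ to blow up. Because $q$ is smooth with $q(a)=0$ and $q'(a)>0$, Taylor's theorem gives $q(\tau)=q'(a)(\tau-a)+O((\tau-a)^2)$, so near $a$ the integrand behaves like $(q'(a)(\tau-a))^{-1/2}$, which is integrable. Hence the integral defining $F(t)$ converges for every $t>a$ and $F(a)=0$. On $(a,\infty)$ the integrand is continuous and strictly positive (since $q>0$ there), so the fundamental theorem of calculus yields $F'(t)=q(t)^{-1/2}>0$, which proves $F$ is strictly increasing.

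Next I would prove $\lim_{t\to\infty}F(t)=\infty$ using the quadratic growth hypothesis. Since $q(\tau)/\tau^2\to\epsilon>0$, there is a $T>a$ with $q(\tau)\le 2\epsilon\tau^2$ for $\tau\ge T$, whence $q(\tau)^{-1/2}\ge(2\epsilon)^{-1/2}\tau^{-1}$ and $F(t)\ge F(T)+(2\epsilon)^{-1/2}\int_T^t\tau^{-1}\,d\tau\to\infty$. Combined with continuity and strict monotonicity, this makes $F$ a homeomorphism of $[a,\infty)$ onto $[0,\infty)$, so its inverse $G$ is well defined, continuous and strictly increasing.

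Finally I would recover the differential equation. On $(0,\infty)$ the inverse function theorem applies because $F'\neq 0$ there, giving $G'(s)=1/F'(G(s))=\sqrt{q(G(s))}$; thus for $t>0$ we obtain $(g'(t))^2=q(g(t))$ directly, and for $t<0$ the even reflection $g(t)=G(-t)$ gives $g'(t)=-G'(-t)$, which squares to the same identity. The delicate point, and the step I expect to be the main obstacle, is the behavior at $t=0$, where $q(g(0))=q(a)=0$ and the inverse function theorem breaks down. Here I would integrate the leading term found above to get the sharper asymptotic $F(t)\sim 2(t-a)^{1/2}/\sqrt{q'(a)}$, which inverts to $G(s)-a\sim q'(a)s^2/4$; this shows $G$ is differentiable at $0$ with $G'(0)=0=\sqrt{q(a)}$. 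Consequently the one sided derivatives of $g$ at $0$ both vanish, so $g$ is differentiable there with $g'(0)=0$, and the identity $(g'(0))^2=0=q(g(0))$ holds, completing the verification on all of $\R$.
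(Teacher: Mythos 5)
Your proposal is correct and follows essentially the same route as the paper: the divergence of $F$ is proved by exactly the paper's comparison of the integrand with $1/\tau$ (i.e.\ with $\log t$), and the remaining steps are the ``direct verification'' the paper leaves to the reader. Your write-up simply fills in those details carefully, most notably the gluing at $t=0$, where the asymptotics $F(t)\sim 2\sqrt{t-a}/\sqrt{q'(a)}$ and $G(s)-a\sim q'(a)s^{2}/4$ correctly show $g'(0)=0$ so that the even reflection really does solve $(g')^{2}=q(g)$ on all of $\R$.
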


\begin{proof}
In order to prove that the limit when $t\to \infty$ of $F(t)$ is infinity it is enough to compare $F(t)$ with $\log(t)=\int_1^t\frac{1}{\sqrt{\tau^2}}d\tau$ for big positive values of $t$. A direct verification proves the rest of the lemma.
\end{proof}

\begin{lem} \label{solutions lemma 2}
Let $q:[a,b]\to \R$ be a smooth function such that $q(a)=0=q(b)$,  $\, q^\prime(a)>0$, $\, q^\prime(b)<0$  and $q(t)>0$ for all $a<t<b$. If

$$F:[a,b]\to \R \com{is given by} F(t)=\int_a^t\frac{1}{\sqrt{q(\tau)}}d\tau $$

then, $F(t)$ is a well-defined strictly increasing function and $\lim_{t\to b}F(t)=\frac{T}{2}<\infty$. Moreover, if $G:[0,\frac{T}{2}]\to [a,b]$ is the inverse function of $F$, then, the $T$-periodic function $g:(-\infty,\infty)\to [a,b]$ that satisfies

$$g(t)=G(t)\quad\com{if}0\le t\le \frac{T}{2} \com{and} g(t)=G(-t)\quad\com{if}-\frac{T}{2}\le t\le 0 $$

is a solution of the ordinary differential equation $(g^\prime(t))^2=q(g(t))$.

\end{lem}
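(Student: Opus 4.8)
The plan is to follow the same strategy as in Lemma~\ref{solution lemma 1}, the only genuinely new point being that here the integrand $1/\sqrt{q}$ is singular at \emph{both} endpoints and yet the integral stays finite. First I would establish that $F$ is well defined by checking that the improper integral converges near $a$ and near $b$. Near $a$ the hypotheses $q(a)=0$ and $q^\prime(a)>0$ give, via a first order Taylor expansion, a constant $c_1>0$ with $q(\tau)\ge c_1(\tau-a)$ for $\tau$ close to $a$; hence $1/\sqrt{q(\tau)}\le c_1^{-1/2}(\tau-a)^{-1/2}$, which is integrable. Symmetrically, $q(b)=0$ and $q^\prime(b)<0$ yield $q(\tau)\ge c_2(b-\tau)$ near $b$ for some $c_2>0$, so $1/\sqrt{q(\tau)}\le c_2^{-1/2}(b-\tau)^{-1/2}$ is integrable near $b$. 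Therefore the limit $\lim_{t\to b}F(t)$ exists and is finite; call it $T/2$.

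Since $1/\sqrt{q(\tau)}>0$ on $(a,b)$, the function $F$ is continuous and strictly increasing, so $F:[a,b]\to[0,T/2]$ is a homeomorphism with continuous, strictly increasing inverse $G:[0,T/2]\to[a,b]$, where $G(0)=a$ and $G(T/2)=b$. On the open interval $(a,b)$ we have $F^\prime(t)=1/\sqrt{q(t)}>0$, so by the inverse function theorem $G$ is differentiable on $(0,T/2)$ with $G^\prime(s)=1/F^\prime(G(s))=\sqrt{q(G(s))}$. In particular $G$ satisfies $(G^\prime)^2=q(G)$ with $G^\prime>0$ on $(0,T/2)$, which is exactly the asserted ODE for $g$ on that interval.

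Next I would verify that the even, $T$-periodic extension $g$ really is a $C^1$ solution across the turning points $t=0$ and $t=T/2$, where the reflected pieces are glued. From $G^\prime(s)=\sqrt{q(G(s))}$ and $G(s)\to a$ as $s\to0^+$ we get $G^\prime(0^+)=\sqrt{q(a)}=0$, and likewise $G^\prime((T/2)^-)=\sqrt{q(b)}=0$. Using $g(t)=G(|t|)$ on $[-T/2,T/2]$ together with the $T$-periodic continuation, the left and right one-sided derivatives of $g$ at $t=0$ and at $t=T/2$ all equal $0$, so $g$ is $C^1$ and $g^\prime$ vanishes there; since $q(g)$ also vanishes at these points (where $g=a$ or $g=b$), the identity $(g^\prime)^2=q(g)$ holds at the turning points as well. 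Finally, because the equation is autonomous and invariant under $t\mapsto -t$ (reflection sends a solution to a solution, flipping the sign of $g^\prime$ but not of $(g^\prime)^2$), the reflected and translated pieces automatically satisfy $(g^\prime)^2=q(g)$, so $g$ solves the equation on all of $\R$.

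The step I expect to be the main obstacle is the gluing at the turning points $t=0,\,T/2$: one must confirm that the left and right one-sided derivatives of $g$ agree there. This is precisely where the finiteness from the first step and the fact that $G^\prime\to0$ at the endpoints combine — because $F$ has vertical tangents at $a$ and $b$, its inverse $G$ has horizontal tangents at $0$ and $T/2$, which is what makes the even reflection $C^1$ rather than merely continuous.
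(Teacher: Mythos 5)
Your proof is correct. Note that the paper offers no proof of this lemma at all: it is stated right after Lemma \ref{solution lemma 1}, whose proof ends with ``a direct verification proves the rest of the lemma,'' and the periodic case is left entirely to the reader; your argument supplies exactly the omitted verification --- integrability of $1/\sqrt{q}$ at both simple zeros, the inverse-function computation $G^\prime=\sqrt{q(G)}$, and the $C^1$ gluing of the even, $T$-periodic extension at the turning points. The only step you state too quickly is the endpoint derivative: from $\lim_{s\to 0^+}G^\prime(s)=0$ alone you assert $G^\prime(0^+)=0$, which is not a tautology but follows from the mean value theorem (for $s>0$ write $\bigl(G(s)-G(0)\bigr)/s=G^\prime(\xi_s)$ with $0<\xi_s<s$ and let $s\to 0^+$), and the same remark applies at $s=T/2$; with that standard patch your proof is complete.
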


\subsection{CMC hypersurfaces in $\Sp_k^{n+1}$}\label{Sitter examples}

Let us start this section by considering the following differential equations

\begin{eqnarray}\label{Sphere m1}
(g^\prime(t))^2=q_1(g(t))\com{where} q_1(t)=c-t^2+t^2(h+t^{-n})^2
\end{eqnarray}

and
\begin{eqnarray}\label{Sphere 1}
(g^\prime(t))^2=p_1(g(t))\com{where} p_1(t)=c-t^2-t^2(h+t^{-n})^2
\end{eqnarray}

where $c$ is a constant.

\begin{thm}\label{examples1 Sm1}
Let $c$ be a positive constant and let $g:(a_1,a_2)\to \R$ be a non constant solution of (\ref{Sphere m1}) such that $g(t)>\sqrt{c}$. For any pair of integers $k$ and $n$  such that $n\ge 2$ and $1\le k\le n$, if we define

$$\lambda=h+g^{-n},\quad \mu=h-(n-1)g^{-n}\quad r=\frac{g}{\sqrt{c}} \com{and} \theta(t)=\int_0^t\frac{r(\tau)\lambda(\tau)}{r(\tau)^2-1}d\tau$$

and

$$\tilde{\Sp}_{k-1}^{n-1}=\Sp_k^{n+1}\cap\tilde{\R}_{k-1}^n\com{where}\tilde{\R}_{k-1}^n=\{x\in\R_k^{n+2}:x_k=x_{k+1}=0\} $$

$$ B_2(t)=(0,\dots,0,\cosh(t),\sinh(t),0,\dots,0)\com{and} B_3(t)=(0,\dots,0,\sinh(t),\cosh(t),0,\dots,0) $$

then, the map $\phi:\tilde{\Sp}_{k-1}^{n-1}\times (a_1,a_2)\to \Sp_k^{n+1}$ given by

$$\phi(y,u)=r(u)\, y+\sqrt{r^2(u)-1} \, B_2(\theta(u))$$

defines a hypersurface with constant mean curvature h.

\end{thm}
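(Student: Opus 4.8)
The plan is to exhibit $\phi$ as an equivariant immersion whose two distinct principal curvatures are exactly the functions $\lambda$ and $\mu$, and then to read off the mean curvature from Lemma \ref{mu}. First I would verify that $\phi$ lands in $\Sp_k^{n+1}$. Writing $s=\sqrt{r^2-1}$, and noting that $y\in\tilde{\Sp}_{k-1}^{n-1}$ has vanishing $k,k+1$ coordinates while $B_2(\theta)$ is supported exactly on those, the cross term $\<y,B_2(\theta)\>$ is zero; combined with $\<y,y\>=1$ and $\<B_2,B_2\>=-1$ this gives $\<\phi,\phi\>=r^2-s^2=1$. I would also record the elementary relations $B_2'=B_3$, $B_3'=B_2$, together with $\theta'=\frac{r\lambda}{s^2}$ and $s'=\frac{rr'}{s}$, which drive all the later computations.

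Next I would differentiate. For $X$ tangent to $\tilde{\Sp}_{k-1}^{n-1}$ at $y$ one has $d\phi(X)=rX$, while in the profile direction $\phi_u=r'y+s'B_2+s\theta'B_3$. To find the unit normal I would look for $N=ay+bB_2+eB_3$; the conditions $\<N,\phi\>=0$ and $\<N,\phi_u\>=0$ force $(a,b)\parallel(s,r)$ and then pin down $e$, giving (up to scale) $\tilde N=r\lambda s\,y+r^2\lambda\,B_2+r'B_3$. The decisive step is the normalization: substituting the defining equation (\ref{Sphere m1}), which in the variable $r$ reads $(r')^2=1-r^2+r^2\lambda^2$, collapses $\<\tilde N,\tilde N\>$ to $-s^2$. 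Hence $N$ is timelike, consistent with the expected causal character of the hypersurface, and the unit normal is $\hat N=r\lambda\,y+\frac{r^2\lambda}{s}B_2+\frac{r'}{s}B_3$.

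I would then compute the shape operator by differentiating $\hat N$ in the flat ambient connection $\bar\nabla$ of $\R_k^{n+2}$. Along $\tilde{\Sp}_{k-1}^{n-1}$ only the factor $y$ varies, so $\bar\nabla_X\hat N=r\lambda\,X=\lambda\,d\phi(X)$, exhibiting $\lambda$ as a principal curvature of multiplicity $n-1$. In the $u$-direction, since $\<\hat N,\hat N\>$ and $\<\hat N,\phi\>$ are constant, $\partial_u\hat N$ is tangent to the hypersurface; as it lies in $\mathrm{span}(y,B_2,B_3)$ it is automatically orthogonal to every $\tilde{\Sp}$-direction, hence proportional to $\phi_u$. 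Here $y$ is held fixed, so the coefficient of $y$ in $\partial_u\hat N$ is $(r\lambda)'$ while that in $\phi_u$ is $r'$; matching them identifies the proportionality factor as $\frac{(r\lambda)'}{r'}$, which equals $\mu$ precisely by the identity $(r\lambda)'=\mu r'$ of Lemma \ref{mu}. Thus $\mu$ is the remaining principal curvature, with multiplicity one.

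Finally, with principal curvatures $\lambda$ (multiplicity $n-1$) and $\mu$ (multiplicity $1$), the mean curvature is $\frac{(n-1)\lambda+\mu}{n}$, which is identically $h$ by the first conclusion of Lemma \ref{mu}; a choice of orientation for $\hat N$ fixes the sign. The main obstacle is the normalization of the normal in the second paragraph: it is exactly there that the particular form of $q_1$ in (\ref{Sphere m1}), and the timelike choice $B_2$ with $\<B_2,B_2\>=-1$, are used, and obtaining $\<\tilde N,\tilde N\>=-s^2$ genuinely requires the ODE rather than being formal. Everything else is bookkeeping, modulo the standing hypotheses $g>\sqrt c$ (so $s>0$ throughout) and $g$ non-constant (so $r'\not\equiv0$), with the isolated points where $r'=0$ or $\lambda=0$ handled by continuity.
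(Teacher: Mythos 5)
Your proposal is correct and follows essentially the same route as the paper's proof: check $\langle\phi,\phi\rangle=1$, build the unit timelike normal (your $\hat N$ is the paper's $-\nu$, normalized using the ODE rewritten as $(r')^2-r^2\lambda^2=1-r^2$), obtain $\lambda$ as the principal curvature of multiplicity $n-1$ in the $\tilde{\Sp}_{k-1}^{n-1}$ directions, identify the remaining principal curvature as $(r\lambda)'/r'=\mu$ via Lemma \ref{mu}, and conclude $H=\frac{(n-1)\lambda+\mu}{n}=h$. The only cosmetic differences are that you solve for the normal rather than verify the stated one, and you differentiate the normal along sphere directions directly instead of through the paper's explicit curves $\beta(t)$.
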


\begin{proof}
A direct computation shows the following identities,

$$ (r^\prime)^2-r^2 \lambda^2=1-r^2,\com{and} \lambda r^\prime +r\lambda^\prime=\mu r^\prime$$

Notice that

$$\<y,y\>=1,\quad\<B_2,B_2\>=-1,\quad \<B_3,B_3\>=1,\quad \<B_2,B_3\>=0,\quad B_2^\prime=\frac{r\lambda}{r^2-1}B_3$$

A direct verification shows that $\<\phi,\phi\>=1$ and that

$$\frac{\partial{\phi}}{\partial u}=r^\prime\, y+\frac{rr^\prime}{\sqrt{r^2-1}} \, B_2+
\frac{r \lambda}{\sqrt{r^2-1}} \, B_3$$

satisfies $\<\frac{\partial{\phi}}{\partial u},\frac{\partial{\phi}}{\partial u}\>=1$.
We have that the tangent space of the immersion at $\phi(y,t)$ is given by

$$T_{\phi(y,u)}=\{v+s\, \frac{\partial \phi}{\partial u}: v\in \tilde{\R}_{k-1}^n,\quad \<v,y\>=0\com{and} s\in{\bf R}\}$$

A direct verification shows that the map

$$\nu=-r\lambda\,   y -\frac{r^2\, \lambda}{\sqrt{r^2-1}} \, B_2 - \frac{r^\prime}{\sqrt{r^2-1}} \, B_3$$

satisfies that $\<\nu,\nu\>=-1$, $\<\nu,\frac{\partial \phi}{\partial u}\>=0$ and, for any $v\in\tilde{\R}_{k-1}^n$ with $\<v,y\>=0$, we have that $\<\nu,v\>=0$. It then follows that $\nu$ is a Gauss map of the immersion $\phi$. The fact that the immersion $\phi$ has constant mean curvature $h$ follows because, for any unit vector $v$ in $\tilde{\R}_{k-1}^n$ perpendicular to $y$, we have that

$$\beta(t)= r\cosh(t)\, y+r\sinh(t)\, v +\sqrt{r^2-1}\, B_2=\phi(\cosh(t)y+\sinh(t)v,u)\com{in the case that $\<v,v\>=-1$} $$

and

$$\beta(t)= r\cos(t)\, y+r\sin(t)\, v +\sqrt{r^2-1}\, B_2=\phi(\cos(t)y+\sin(t)v,u)\com{in the case that $\<v,v\>=1$} $$

satisfies that $\beta(0)=\phi(y,u)$, $\beta^\prime(0)=rv$ and

$$\frac{d \nu(\beta(t))}{dt}\big{|}_{t=0} = d\nu(rv)=-r\lambda\, v$$

Therefore,  $\lambda$ is a principal curvature with multiplicity $n-1$. Now, since $\<\frac{\partial \nu}{\partial u},v\>=0$ for every $v\in T_{\phi(y,u)}\cap \tilde{\R}_{k-1}^n$, we have that $\frac{\partial \phi}{\partial u}$ defines a principal direction, i.e. we must have that $\frac{\partial \nu}{\partial u}$ is a multiple of $\frac{\partial \phi}{\partial u}$.  A direct verification using the lemma (\ref{mu}) shows that,

$$\<\frac{\partial \nu}{\partial u},y\>=-\lambda^\prime \, r-\lambda r^\prime=-\mu\, r^\prime=-(nh-(n-1)\lambda)r^\prime$$

We also have that $\<\frac{\partial \phi}{\partial u},y\>=r^\prime$, therefore, since $r$ is not constant, we obtain that

$$\frac{\partial \nu}{\partial u}= d\nu(\frac{\partial \phi}{\partial u})=-\mu\, \frac{\partial \phi}{\partial u}=-(nh-(n-1)\lambda) \frac{\partial \phi}{\partial u}$$

It follows that the other principal curvature is $nh-(n-1)\lambda$. Therefore, $\phi$ defines an immersion with constant mean curvature $h$, this proves the theorem.

\end{proof}

\begin{thm}\label{rem example1 Sm1}
With the same notation as in the previous theorem, if $r_0>1$ is any constant, then the map $\phi:\tilde{\Sp}_{k-1}^{n-1}\times \R \to \Sp_k^{n+1}$ given by

$$\phi(y,t)=r_0\, y+\sqrt{r_0^2-1} \, B_2(t)$$

defines a complete hypersurface with constant mean curvature

$$ \frac{r_0}{n\, \sqrt{r_0^2-1}}\, +\, \frac{(n-1)\, \sqrt{r_0^2-1}}{n\, r_0}$$

In the case $k=1$, these examples are known as hyperbolic cylinder in the de Sitter space $\Sp_1^{n+1}$ and they provide space like complete hypersurfaces with constant mean curvature values in $[\frac{2\, \sqrt{n-1}}{n },\infty)$ when $n>2$ and $(1,\infty)$ when $n=2$.

\end{thm}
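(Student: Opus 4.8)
The plan is to specialize the previous theorem (Theorem \ref{examples1 Sm1}) to the constant solution $g\equiv r_0\sqrt{c}$, since a constant $g$ trivially satisfies $(g')^2=q_1(g)$ precisely when $q_1(r_0\sqrt c)=0$; but rather than verify that algebraic condition I would instead re-derive the principal curvatures directly for the simple map $\phi(y,t)=r_0\,y+\sqrt{r_0^2-1}\,B_2(t)$, which is cleaner since $r\equiv r_0$ is constant. First I would record that here $r=r_0$ is constant, $r'=0$, $\lambda=h+g^{-n}$ and $\mu=h-(n-1)g^{-n}$ are constants, and the parameter $\theta(t)=\frac{r_0\lambda}{r_0^2-1}\,t$ is linear, so after a harmless reparametrization of $t$ we may take $\phi(y,t)=r_0\,y+\sqrt{r_0^2-1}\,B_2(t)$ exactly as stated. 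I would then check $\<\phi,\phi\>=r_0^2-(r_0^2-1)=1$, confirming $\phi$ lands in $\Sp_k^{n+1}$.

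Next I would compute the principal curvatures. The first family comes verbatim from the computation in the previous theorem: for a unit vector $v\in\tilde{\R}_{k-1}^n$ perpendicular to $y$, the curve $\beta(t)=\phi(\cosh t\,y+\sinh t\,v,u)$ (or the trigonometric analogue when $\<v,v\>=1$) has $\beta'(0)=r_0 v$ and $d\nu(r_0 v)=-r_0\lambda\,v$, so $\lambda$ is a principal curvature of multiplicity $n-1$. For the remaining direction $\partial\phi/\partial t=\sqrt{r_0^2-1}\,B_2'(t)=\sqrt{r_0^2-1}\,B_3(t)$, a direct differentiation of the Gauss map $\nu=-r_0\lambda\,y-\frac{r_0^2\lambda}{\sqrt{r_0^2-1}}B_2-\frac{r_0'}{\sqrt{r_0^2-1}}B_3$ shows $\partial\nu/\partial t$ is a multiple of $\partial\phi/\partial t$; since $r_0'=0$ the computation simplifies and yields the principal curvature $\mu=h-(n-1)g^{-n}=nh-(n-1)\lambda$. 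Then $\frac{(n-1)\lambda+\mu}{n}=h$ by Lemma \ref{mu}, so the mean curvature is the constant $h$.

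To obtain the explicit value of $h$ in terms of $r_0$, I would solve $q_1(g)=0$ at $g=\sqrt c\,r_0$, i.e. $c-c r_0^2+c r_0^2(h+g^{-n})^2=0$, giving $(h+g^{-n})^2=\frac{r_0^2-1}{r_0^2}$, hence $\lambda=h+g^{-n}=\frac{\sqrt{r_0^2-1}}{r_0}$ (choosing the sign consistent with $g>\sqrt c$). From $\lambda=\frac{\sqrt{r_0^2-1}}{r_0}$ and $g^{-n}=\lambda-h$ I would back out the relation $nh=(n-1)\lambda+\mu$; expressing $\mu=nh-(n-1)\lambda$ and using the orthonormal frame computation that $\mu$ equals the second principal curvature, I would arrive at $h=\frac{1}{n}\bigl((n-1)\lambda+\mu\bigr)$ and, after substituting the two curvature values $\frac{\sqrt{r_0^2-1}}{r_0}$ and $\frac{r_0}{\sqrt{r_0^2-1}}$, obtain the stated formula $h=\frac{r_0}{n\sqrt{r_0^2-1}}+\frac{(n-1)\sqrt{r_0^2-1}}{n\,r_0}$. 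Completeness is immediate because $\tilde{\Sp}_{k-1}^{n-1}$ is complete and $B_2$ is defined for all $t\in\R$, so $\phi$ is defined and regular on the complete product $\tilde{\Sp}_{k-1}^{n-1}\times\R$.

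For the final claim about the range of $h$ when $k=1$, I would treat $h(r_0)=\frac{r_0}{n\sqrt{r_0^2-1}}+\frac{(n-1)\sqrt{r_0^2-1}}{n\,r_0}$ as a function on $(1,\infty)$ and analyze it by elementary calculus. Writing $x=\frac{\sqrt{r_0^2-1}}{r_0}\in(0,1)$ so that $h=\frac{1}{n}\bigl(\frac{1}{x}+(n-1)x\bigr)$, I would minimize over $x\in(0,1)$: the derivative vanishes at $x=\frac{1}{\sqrt{n-1}}$, which lies in $(0,1)$ exactly when $n>2$, giving the minimum value $\frac{2\sqrt{n-1}}{n}$; as $x\to 0^+$ or $x\to 1^-$ the function tends to $+\infty$, so the image is $[\frac{2\sqrt{n-1}}{n},\infty)$. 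When $n=2$ the critical point $x=1$ falls on the boundary, so $h$ is strictly decreasing on $(0,1)$ with infimum $h(1^-)=1$ (not attained) and supremum $+\infty$, yielding the range $(1,\infty)$. The main obstacle is purely bookkeeping: keeping the sign of $\lambda=h+g^{-n}$ consistent with the hypothesis $g>\sqrt c$ when extracting the square root in $q_1(g)=0$, and correctly identifying which of $\lambda,\mu$ becomes $\frac{\sqrt{r_0^2-1}}{r_0}$ versus $\frac{r_0}{\sqrt{r_0^2-1}}$; once the two principal curvatures are pinned down the averaging and the single-variable optimization are routine.
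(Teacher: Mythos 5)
Your argument is correct in its essentials, but note that it cannot coincide with the paper's proof for a simple reason: the paper states this theorem with no proof at all, treating it as an immediate by-product of Theorem \ref{examples1 Sm1} (the cylinder corresponds to the constant solution $g\equiv r_0\sqrt{c}$ of (\ref{Sphere m1}), i.e.\ to a double root of $q_1$). Your decision to recompute the shape operator from scratch is in fact necessary, not merely cleaner: the proof of Theorem \ref{examples1 Sm1} identifies the second principal curvature by comparing $\<\frac{\partial \nu}{\partial u},y\>=-\mu\,r^\prime$ with $\<\frac{\partial \phi}{\partial u},y\>=r^\prime$ and explicitly invokes ``since $r$ is not constant''; when $r\equiv r_0$ both of these quantities vanish and that argument yields nothing, so the constant case genuinely requires the direct differentiation you carry out ($\frac{\partial\phi}{\partial t}=\sqrt{r_0^2-1}\,B_3$ and $\frac{\partial\nu}{\partial t}$ proportional to $B_3$), giving principal curvatures $\lambda=\frac{\sqrt{r_0^2-1}}{r_0}$ with multiplicity $n-1$ and $\mu=\frac{r_0}{\sqrt{r_0^2-1}}$, whence $h=\frac{(n-1)\lambda+\mu}{n}$ is the stated value. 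In this sense your writeup supplies a verification the paper omits.

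Two details should be repaired. First, in the range analysis for $n>2$ you claim $h\to+\infty$ as $x\to1^-$; in fact $h\to\frac{1}{n}\bigl(1+(n-1)\bigr)=1$ there, exactly the computation you yourself perform in the $n=2$ case. The stated image $[\frac{2\sqrt{n-1}}{n},\infty)$ is still correct, because $\frac{2\sqrt{n-1}}{n}<1$ for $n>2$ and the decreasing branch $x\in(0,\frac{1}{\sqrt{n-1}}]$ alone already sweeps out all of $[\frac{2\sqrt{n-1}}{n},\infty)$, but the reasoning as written is false and should be replaced by this one-sided argument. Second, completeness does not follow merely from $\phi$ being defined and regular on a complete product (an immersion of a complete manifold need not induce a complete metric); what makes it work here is that for $k=1$ the induced metric is explicitly $r_0^2\,g_{\Sp^{n-1}}+(r_0^2-1)\,dt^2$, a product of complete Riemannian metrics scaled by constants, hence complete.
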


\begin{thm}\label{examples2 Sm1}
Let $c$ be a negative constant and let $g:(a_1,a_2)\to \R$ be a positive non constant solution of (\ref{Sphere m1}). For any pair of integers $k$ and $n$  such that $n\ge 2$ and $1\le k\le n$, if we define

$$\lambda=h+g^{-n},\quad \mu=h-(n-1)g^{-n}\com{and} r=\frac{g}{\sqrt{-c}}\quad \theta(t)=\int_0^t\frac{r(\tau)\lambda(\tau)}{r(\tau)^2+1}d\tau$$

and

$$\tilde{\Hi}_{k-1}^{n-1}=\Hi_{k-1}^{n+1}\cap\tilde{\R}_{k}^n\com{where}\tilde{\R}_{k}^n=\{x\in\R_k^{n+2}:x_{n+1}=x_{n+2}=0\} $$

$$ B_2(t)=(0,\dots,0,\cos(t),\sin(t))\com{and} B_3(t)=(0,\dots,0,-\sin(t),\cos(t)) $$

then, the map $\phi:\tilde{\Hi}_{k-1}^{n-1}\times (a_1,a_2)\to \Sp_k^{n+1}$ given by

$$\phi(y,u)=r(u)\, y+\sqrt{r^2(u)+1} \, B_2(\theta(u))$$

defines a hypersurface with constant mean curvature h.

\end{thm}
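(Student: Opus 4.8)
The plan is to mirror the proof of Theorem \ref{examples1 Sm1} almost verbatim, carefully tracking the two sign changes forced by the new hypotheses: $c<0$ (so that $r=g/\sqrt{-c}$) and the fact that the fibre $y$ now lives in a hyperbolic factor rather than a spherical one. First I would record the elementary inner products dictated by the new choices. Since $y\in\tilde{\Hi}_{k-1}^{n-1}$ we have $\langle y,y\rangle=-1$, while $B_2,B_3$ now live in the positive-definite $(x_{n+1},x_{n+2})$-plane, so $\langle B_2,B_2\rangle=\langle B_3,B_3\rangle=1$, $\langle B_2,B_3\rangle=0$ and $B_2'=B_3$; moreover $y$ is orthogonal to both $B_2$ and $B_3$. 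With these, $\langle\phi,\phi\rangle=-r^2+(r^2+1)=1$ is immediate, so $\phi$ indeed maps into $\Sp_k^{n+1}$.

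The computational heart is a single algebraic identity coming from the ODE (\ref{Sphere m1}). Writing $g=r\sqrt{-c}$ and substituting into $(g')^2=c-g^2+g^2\lambda^2$ yields, after dividing by $-c>0$, the relation $(r')^2-r^2\lambda^2=-(r^2+1)$, which is the exact analogue of the identity $(r')^2-r^2\lambda^2=1-r^2$ used in Theorem \ref{examples1 Sm1}. Together with $(r\lambda)'=\mu r'$ from Lemma \ref{mu}, this is all the analysis needed. Using it I would check that $\frac{\partial\phi}{\partial u}=r'y+\frac{rr'}{\sqrt{r^2+1}}B_2+\frac{r\lambda}{\sqrt{r^2+1}}B_3$ is a unit spacelike vector, and that the candidate Gauss map $\nu=-r\lambda\,y-\frac{r^2\lambda}{\sqrt{r^2+1}}B_2-\frac{r'}{\sqrt{r^2+1}}B_3$ satisfies $\langle\nu,\nu\rangle=-1$, $\langle\nu,\phi\rangle=0$, $\langle\nu,\frac{\partial\phi}{\partial u}\rangle=0$, and $\langle\nu,v\rangle=0$ for every $v\in\tilde{\R}_k^n$ with $\langle v,y\rangle=0$; all four reduce to the identity above once the cross terms drop out.

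For the principal curvatures I would repeat the two-step argument of Theorem \ref{examples1 Sm1}. To obtain the multiplicity-$(n-1)$ direction, I fix a unit $v\in\tilde{\R}_k^n$ with $\langle v,y\rangle=0$ and move $y$ along the geodesic of $\tilde{\Hi}_{k-1}^{n-1}$ in the direction $v$; here the roles of the trigonometric and hyperbolic functions swap relative to the spherical case, so I would use $\cosh t\,y+\sinh t\,v$ when $\langle v,v\rangle=1$ and $\cos t\,y+\sin t\,v$ when $\langle v,v\rangle=-1$, each of which stays on $\langle\cdot,\cdot\rangle=-1$. Differentiating $\nu$ along $\beta(t)=\phi(\cosh t\,y+\sinh t\,v,u)$ (resp.\ the trigonometric curve) at $t=0$ gives $d\nu(rv)=-r\lambda\,v$, so $\lambda$ is a principal curvature of multiplicity $n-1$. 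For the remaining direction, the $\tilde{\R}_k^n$-part of $\nu$ is just $-r\lambda\,y$, so $\frac{\partial\nu}{\partial u}$ is orthogonal to every such $v$ and hence proportional to $\frac{\partial\phi}{\partial u}$; comparing $y$-components via $\langle\nu,y\rangle=r\lambda$ gives $\langle\frac{\partial\nu}{\partial u},y\rangle=(r\lambda)'=\mu r'$, while $\langle\frac{\partial\phi}{\partial u},y\rangle=-r'$, whence $\frac{\partial\nu}{\partial u}=-\mu\,\frac{\partial\phi}{\partial u}$ and the second principal curvature is $\mu=nh-(n-1)\lambda$. The mean curvature is then $\frac1n[(n-1)\lambda+\mu]=h$ by Lemma \ref{mu}.

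I expect the only real obstacle to be the bookkeeping of these two independent sign flips, one from $c<0$ (which turns every $r^2-1$ into $r^2+1$) and one from $\langle y,y\rangle=-1$ (which reverses the sign of several inner products relative to Theorem \ref{examples1 Sm1}). They conspire so that, for instance, $\langle\nu,y\rangle$ becomes $+r\lambda$ instead of $-r\lambda$ and $\langle\frac{\partial\phi}{\partial u},y\rangle$ becomes $-r'$, yet the final principal curvatures come out identical; confirming that these flips cancel correctly, rather than checking each displayed identity in isolation, is where care is required.
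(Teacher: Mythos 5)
Your proposal is correct and follows essentially the same route as the paper's own proof: the same key identity $(r^\prime)^2-r^2\lambda^2=-1-r^2$, the same Gauss map $\nu$, the same geodesic curves $\beta(t)$ (with the hyperbolic/trigonometric cases correctly swapped relative to Theorem \ref{examples1 Sm1}), and the same two-step identification of the principal curvatures $\lambda$ and $\mu=nh-(n-1)\lambda$. In fact your sign bookkeeping in the last step is slightly more careful than the paper's, which copies $\langle\frac{\partial\nu}{\partial u},y\rangle=-\mu r^\prime$ and $\langle\frac{\partial\phi}{\partial u},y\rangle=r^\prime$ verbatim from the spherical-fibre case even though $\langle y,y\rangle=-1$ flips both signs; as you note, the two flips cancel and the conclusion $\frac{\partial\nu}{\partial u}=-\mu\frac{\partial\phi}{\partial u}$ is unaffected.
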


\begin{proof}
A direct computation shows the following identities,

$$ (r^\prime)^2-r^2 \lambda^2=-1-r^2,\com{and} \lambda r^\prime +r\lambda^\prime=\mu r^\prime$$

Notice that

$$\<y,y\>=-1,\quad\<B_2,B_2\>=1,\quad \<B_3,B_3\>=1,\quad \<B_2,B_3\>=0,\quad B_2^\prime=\frac{r\lambda}{r^2+1}B_3$$

A direct verification shows that $\<\phi,\phi\>=1$ and that

$$\frac{\partial{\phi}}{\partial u}=r^\prime\, y+\frac{rr^\prime}{\sqrt{r^2+1}} \, B_2+
\frac{r \lambda}{\sqrt{r^2+1}} \, B_3$$

satisfies $\<\frac{\partial{\phi}}{\partial u},\frac{\partial{\phi}}{\partial u}\>=1$.
We have that the tangent space of the immersion at $\phi(y,t)$ is given by

$$T_{\phi(y,u)}=\{v+s\, \frac{\partial \phi}{\partial u}: v\in \tilde{\R}_{k}^n,\quad \<v,y\>=0\com{and} s\in{\bf R}\}$$

A direct verification shows that the map

$$\nu=-r\lambda\,   y -\frac{r^2\, \lambda}{\sqrt{r^2+1}} \, B_2 - \frac{r^\prime}{\sqrt{r^2+1}} \, B_3$$

satisfies that $\<\nu,\nu\>=-1$, $\<\nu,\frac{\partial \phi}{\partial u}\>=0$ and, for any $v\in\tilde{\R}_{k-1}^n$ with $\<v,y\>=0$, we have that $\<\nu,v\>=0$. It then follows that $\nu$ is a Gauss map of the immersion $\phi$. The fact that the immersion $\phi$ has constant mean curvature $h$ follows because, for any unit vector $v$ in $\tilde{\R}_{k-1}^n$ perpendicular to $y$, we have that

$$\beta(t)= r\cosh(t)\, y+r\sinh(t)\, v +\sqrt{r^2+1}\, B_2=\phi(\cosh(t)y+\sinh(t)v,u)\com{in the case that $\<v,v\>=1$} $$

and

$$\beta(t)= r\cos(t)\, y+r\sin(t)\, v +\sqrt{r^2+1}\, B_2=\phi(\cos(t)y+\sin(t)v,u)\com{in the case that $\<v,v\>=-1$} $$

satisfies that $\beta(0)=\phi(y,u)$, $\beta^\prime(0)=rv$ and

$$\frac{d \nu(\beta(t))}{dt}\big{|}_{t=0} = d\nu(rv)=-r\lambda\, v$$

Therefore,  $\lambda$ is a principal curvature with multiplicity $n-1$. Now, since $\<\frac{\partial \nu}{\partial u},v\>=0$ for every $v\in T_{\phi(y,u)}\cap \tilde{\R}_{k-1}^n$, we have that $\frac{\partial \phi}{\partial u}$ defines a principal direction, i.e. we must have that $\frac{\partial \nu}{\partial u}$ is a multiple of $\frac{\partial \phi}{\partial u}$.  A direct verification using the lemma (\ref{mu}) shows that,

$$\<\frac{\partial \nu}{\partial u},y\>=-\lambda^\prime \, r-\lambda r^\prime=-\mu\, r^\prime=-(nh-(n-1)\lambda)r^\prime$$

We also have that $\<\frac{\partial \phi}{\partial u},y\>=r^\prime$, therefore, since $r$ is not constant, we obtain that

$$\frac{\partial \nu}{\partial u}= d\nu(\frac{\partial \phi}{\partial u})=-\mu\, \frac{\partial \phi}{\partial u}=-(nh-(n-1)\lambda) \frac{\partial \phi}{\partial u}$$

It follows that the other principal curvature is $nh-(n-1)\lambda$. Therefore $\phi$ defines an immersion with constant mean curvature $h$, this proves the theorem.

\end{proof}

\begin{thm}\label{rem example2 Sm1}
With the same notation as in the previous theorem, if $r_0$ is any constant, then, the map $\phi:\tilde{\Hi}_{k-1}^{n-1}\times \R \to \Sp_k^{n+1}$ given by

$$\phi(y,t)=r_0\, y+\sqrt{r_0^2+1} \, B_2(t)$$

defines a complete hypersurface with constant mean curvature

$$ \frac{r_0}{n\, \sqrt{r_0^2+1}}\, +\, \frac{(n-1)\, \sqrt{r_0^2+1}}{n\, r_0}$$

These examples provide complete space like hypersurfaces with constant mean curvature values in $(1,\infty)$.

\end{thm}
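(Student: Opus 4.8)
The plan is to treat this statement as the degenerate case $r\equiv r_0$ (equivalently $g$ constant) of Theorem \ref{examples2 Sm1}. When $g$ is constant so are $r$ and $\lambda$, the auxiliary function $\theta$ becomes linear, and one may use $B_2(t)$ directly as the parameter, which is exactly the map written here. First I would verify that $\phi$ lands in $\Sp_k^{n+1}$: from $\<y,y\>=-1$, $\<B_2,B_2\>=1$ and $\<y,B_2\>=0$ one gets $\<\phi,\phi\>=-r_0^2+(r_0^2+1)=1$. Writing $a=\sqrt{r_0^2+1}$, the same orthogonality relations (together with $\<v,B_3\>=0$ for $v$ tangent to $\tilde{\Hi}_{k-1}^{n-1}$) show that the induced metric splits as the constant-coefficient product $r_0^2\,g_{\tilde{\Hi}}\oplus a^2\,dt^2$; hence $d\phi$ has rank $n$ and $\phi$ is an immersion. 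When $k=1$ the factor $\tilde{\Hi}_0^{n-1}$ is Riemannian hyperbolic space, this product metric is positive definite, and $\phi$ is space-like.

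Next I would pin down the two principal curvatures. Substituting $r'=0$ into the identity $(r')^2-r^2\lambda^2=-1-r^2$ from the proof of Theorem \ref{examples2 Sm1} forces $\lambda=\sqrt{r_0^2+1}/r_0=a/r_0$, and the time-like unit normal collapses to $\nu=-r_0\lambda\,y-\tfrac{r_0^2\lambda}{a}B_2$. The curve argument $\beta(s)$ of that theorem applies verbatim (it never used that $r$ was non-constant): in every tangent direction $v$ of $\tilde{\Hi}_{k-1}^{n-1}$ one finds $d\nu(r_0 v)=-r_0\lambda\,v$, so $\lambda$ is a principal curvature of multiplicity $n-1$. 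For the remaining direction the constant case is in fact simpler than the general one: a direct differentiation, using $\tfrac{\partial\phi}{\partial t}=a\,B_3$, gives $\tfrac{\partial\nu}{\partial t}=-\tfrac{r_0^2\lambda}{a}B_3=-\tfrac{r_0^2\lambda}{a^2}\,\tfrac{\partial\phi}{\partial t}$, so I can read off the second principal curvature $\mu=\tfrac{r_0^2\lambda}{a^2}=r_0/\sqrt{r_0^2+1}$ without invoking the ``$r$ not constant'' step used before. Averaging, $H=\tfrac1n\big((n-1)\lambda+\mu\big)$ is precisely the stated expression.

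Completeness is then immediate: the induced metric is a constant rescaling of a product of the complete metric on $\tilde{\Hi}_{k-1}^{n-1}$ with $dt^2$ on $\R$, and such rescalings and products preserve (geodesic) completeness. Finally, for the range in the space-like case $k=1$ with $r_0>0$, I would write $\lambda=\sqrt{r_0^2+1}/r_0$ and note $\mu=1/\lambda$, so $H=\tfrac1n\big((n-1)\lambda+\lambda^{-1}\big)$ with $\lambda\in(1,\infty)$. Since $r_0\mapsto\lambda$ is a decreasing bijection of $(0,\infty)$ onto $(1,\infty)$, and $\tfrac{dH}{d\lambda}=\tfrac1n\big((n-1)-\lambda^{-2}\big)>0$ for every $\lambda>1$ and $n\ge2$, with $H\to1^+$ as $\lambda\to1^+$ and $H\to\infty$ as $\lambda\to\infty$, the value $H$ sweeps out exactly the interval $(1,\infty)$.

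Essentially every computation here is forced once the substitution $r\equiv r_0$ is made, so I expect no real obstacle in the curvature part. The one point deserving genuine care is the range claim: I must check that the endpoint $1$ is a strict lower bound attained by no finite $r_0$, which is exactly what the strict monotonicity of $H(\lambda)$ on $(1,\infty)$ together with the two boundary limits delivers.
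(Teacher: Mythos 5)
Your proposal is correct. It is worth noting that the paper contains no written proof of this statement at all: it is stated as an immediate companion to Theorem \ref{examples2 Sm1}, with the verification left implicit, so there is no ``paper proof'' to match line by line. Your argument is exactly the specialization the author intends --- set $r\equiv r_0$ in the framework of Theorem \ref{examples2 Sm1} --- but you supply two details that genuinely need to be said and appear nowhere in the paper. First, the proof of Theorem \ref{examples2 Sm1} identifies the second principal curvature by an argument that explicitly invokes ``since $r$ is not constant,'' so it cannot be quoted verbatim in the constant case; your direct computation $\frac{\partial \nu}{\partial t}=-\frac{r_0}{\sqrt{r_0^2+1}}\,\frac{\partial \phi}{\partial t}$, together with $\lambda=\sqrt{r_0^2+1}/r_0$ being forced by the unit-normal condition (equivalently by the identity $(r^\prime)^2-r^2\lambda^2=-1-r^2$ with $r^\prime=0$), closes that gap cleanly and yields $\mu=r_0/\sqrt{r_0^2+1}$, hence the stated value of $h$. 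Second, the range claim $(1,\infty)$ is also left unproved in the paper; your substitution $\mu=1/\lambda$, the observation that $r_0\mapsto\lambda$ is a decreasing bijection of $(0,\infty)$ onto $(1,\infty)$, and the monotonicity $\frac{dH}{d\lambda}=\frac{1}{n}\bigl((n-1)-\lambda^{-2}\bigr)>0$ with limits $H\to 1^{+}$ and $H\to\infty$ establish it, including that the endpoint $1$ is not attained. One small caveat: the theorem says ``any constant'' $r_0$, but your formulas and the range claim require $r_0>0$ (for $r_0<0$ one obtains, by symmetry, mean curvatures in $(-\infty,-1)$); you implicitly handle this by restricting to $r_0>0$, which is the sensible reading of the statement.
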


\begin{thm}\label{examples3 Sm1}
Let $c$ be a positive constant and let $g:(a_1,a_2)\to \R$ be a non constant positive solution of (\ref{Sphere m1}) such that $g(t)<\sqrt{c}$. For any pair of integers $k$ and $n$  such that $n\ge 2$ and $1\le k\le n$, if we define

$$\lambda=h+g^{-n},\quad \mu=h-(n-1)g^{-n}\com{and} r=\frac{g}{\sqrt{c}}\quad \theta(t)=\int_0^t\frac{r(\tau)\lambda(\tau)}{1-r(\tau)^2}d\tau$$

and

$$\tilde{\Sp}_{k-1}^{n-1}=\Sp_k^{n+1}\cap\tilde{\R}_{k-1}^n\com{where}\tilde{\R}_{k-1}^n=\{x\in\R_k^{n+2}:x_k=x_{k+1}=0\} $$

$$ B_2(t)=(0,\dots,0,\sinh(t),\cosh(t),0,\dots,0)\com{and} B_3(t)=(0,\dots,0,\cosh(t),\sinh(t),0,\dots,0) $$

then, the map $\phi:\tilde{\Sp}_{k-1}^{n-1}\times (a_1,a_2)\to \Sp_k^{n+1}$ given by

$$\phi(y,u)=r(u)\, y+\sqrt{1-r^2(u)} \, B_2(\theta(u))$$

defines a hypersurface with constant mean curvature h.

\end{thm}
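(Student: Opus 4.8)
The plan is to run the same argument used for Theorems \ref{examples1 Sm1} and \ref{examples2 Sm1}, since the construction is formally identical and differs only in the causal type of the moving frame and in a single sign. First I would record the two algebraic identities that drive everything. Writing $q_1(g)=c-g^2+g^2\lambda^2$ and substituting $g=\sqrt{c}\,r$, the defining ODE (\ref{Sphere m1}) collapses to
$$(r^\prime)^2-r^2\lambda^2=1-r^2,$$
exactly as in Theorem \ref{examples1 Sm1}; here the hypothesis $g(t)<\sqrt{c}$ is what guarantees $r<1$, hence $1-r^2>0$, so that every $\sqrt{1-r^2}$ and the integrand defining $\theta$ are genuine. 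The second identity is $(r\lambda)^\prime=\mu r^\prime$, i.e. $\lambda r^\prime+r\lambda^\prime=\mu r^\prime$, which is precisely Lemma \ref{mu}.

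Next I would fix the ambient geometry. Because the hyperbolic cosine and sine entries of $B_2,B_3$ are interchanged relative to Theorem \ref{examples1 Sm1}, a direct check now gives $\<y,y\>=1$, $\<B_2,B_2\>=1$, $\<B_3,B_3\>=-1$, $\<B_2,B_3\>=0$ and $B_2^\prime=\frac{r\lambda}{1-r^2}B_3$; in particular $B_2$ is now spacelike and $B_3$ timelike, the reverse of the earlier case. From these one verifies $\<\phi,\phi\>=1$, so $\phi$ maps into $\Sp_k^{n+1}$, and one computes
$$\frac{\partial\phi}{\partial u}=r^\prime\,y-\frac{rr^\prime}{\sqrt{1-r^2}}\,B_2+\frac{r\lambda}{\sqrt{1-r^2}}\,B_3,$$
whose squared length, after clearing the denominator, reduces to $(r^\prime)^2-r^2\lambda^2=1-r^2$ and hence equals $1$. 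The tangent space at $\phi(y,u)$ is $\{v+s\,\partial_u\phi:v\in\tilde{\R}_{k-1}^n,\ \<v,y\>=0,\ s\in\R\}$, and the candidate Gauss map is
$$\nu=-r\lambda\,y+\frac{r^2\lambda}{\sqrt{1-r^2}}\,B_2-\frac{r^\prime}{\sqrt{1-r^2}}\,B_3.$$
The $+$ sign on the $B_2$ term, in contrast to the $-$ sign in Theorems \ref{examples1 Sm1} and \ref{examples2 Sm1}, is forced by $\<B_2,B_2\>=+1$, and this is exactly the point where the bookkeeping must be done with care. With this sign one checks $\<\nu,\phi\>=0$, $\<\nu,\partial_u\phi\>=0$ (using the first identity), $\<\nu,v\>=0$ for every $v\in\tilde{\R}_{k-1}^n$ with $\<v,y\>=0$, and $\<\nu,\nu\>=-1$, so that $\nu$ is a well-defined unit normal.

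Finally I would read off the principal curvatures. For a unit $v\in\tilde{\R}_{k-1}^n$ with $\<v,y\>=0$, the curve $\beta(t)=\phi(\cos t\,y+\sin t\,v,u)$ when $\<v,v\>=1$, and $\beta(t)=\phi(\cosh t\,y+\sinh t\,v,u)$ when $\<v,v\>=-1$, lies in the hypersurface with $\beta(0)=\phi(y,u)$, $\beta^\prime(0)=rv$, and $\frac{d}{dt}\nu(\beta(t))|_{t=0}=-r\lambda\,v$; hence $d\nu(v)=-\lambda v$ and $\lambda$ is a principal curvature of multiplicity $n-1$. Since $\<\partial_u\nu,v\>=0$ for all such $v$, the vector $\partial_u\phi$ is the remaining principal direction, and comparing $\<\partial_u\nu,y\>=-(r\lambda)^\prime=-\mu r^\prime$ (Lemma \ref{mu}) with $\<\partial_u\phi,y\>=r^\prime$, together with $r^\prime\not\equiv 0$, yields $d\nu(\partial_u\phi)=-\mu\,\partial_u\phi$, so the last principal curvature is $\mu=nh-(n-1)\lambda$. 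The mean curvature is therefore $\frac{1}{n}\big((n-1)\lambda+\mu\big)=h$. The only genuine obstacle is the sign bookkeeping triggered by the interchange of the timelike and spacelike roles of $B_2$ and $B_3$; once the Gauss map is normalized with the correct sign, every remaining step is the routine verification already carried out in Theorems \ref{examples1 Sm1} and \ref{examples2 Sm1}.
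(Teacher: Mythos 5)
Your proposal is correct and follows essentially the same route as the paper's own proof: the identity $(r')^2-r^2\lambda^2=1-r^2$, the inner products $\<B_2,B_2\>=1$, $\<B_3,B_3\>=-1$ with $B_2'=\frac{r\lambda}{1-r^2}B_3$, the same unit normal $\nu=-r\lambda\,y+\frac{r^2\lambda}{\sqrt{1-r^2}}B_2-\frac{r'}{\sqrt{1-r^2}}B_3$, the curves $\beta$ giving $\lambda$ as a principal curvature of multiplicity $n-1$, and Lemma \ref{mu} yielding the remaining principal curvature $\mu=nh-(n-1)\lambda$ via $\<\partial_u\nu,y\>=-\mu r'$. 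Your observation that the sign of the $B_2$ term in $\nu$ is forced by the interchange of the spacelike and timelike roles of $B_2,B_3$ is exactly the one point where this case differs from Theorems \ref{examples1 Sm1} and \ref{examples2 Sm1}, and you handled it as the paper does.
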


\begin{proof}
A direct computation shows the following identities,

$$ (r^\prime)^2-r^2 \lambda^2=1-r^2,\com{and} \lambda r^\prime +r\lambda^\prime=\mu r^\prime$$

Notice that

$$\<y,y\>=1,\quad\<B_2,B_2\>=1,\quad \<B_3,B_3\>=-1,\quad \<B_2,B_3\>=0,\quad B_2^\prime=\frac{r\lambda}{1-r^2}B_3$$

A direct verification shows that $\<\phi,\phi\>=1$ and that

$$\frac{\partial{\phi}}{\partial u}=r^\prime\, y-\frac{rr^\prime}{\sqrt{1-r^2}} \, B_2+
\frac{r \lambda}{\sqrt{1-r^2}} \, B_3$$

satisfies $\<\frac{\partial{\phi}}{\partial u},\frac{\partial{\phi}}{\partial u}\>=1$.
We have that the tangent space of the immersion at $\phi(y,t)$ is given by

$$T_{\phi(y,u)}=\{v+s\, \frac{\partial \phi}{\partial u}: v\in \tilde{\R}_{k-1}^n,\quad \<v,y\>=0\com{and} s\in{\bf R}\}$$

A direct verification shows that the map

$$\nu=-r\lambda\,   y +\frac{r^2\, \lambda}{\sqrt{1-r^2}} \, B_2 - \frac{r^\prime}{\sqrt{1-r^2}} \, B_3$$

satisfies that $\<\nu,\nu\>=-1$, $\<\nu,\frac{\partial \phi}{\partial u}\>=0$ and, for any $v\in\tilde{\R}_{k-1}^n$ with $\<v,y\>=0$, we have that $\<\nu,v\>=0$. It then follows that $\nu$ is a Gauss map of the immersion $\phi$. The fact that the immersion $\phi$ has constant mean curvature $h$ follows because, for any unit vector $v$ in $\tilde{\R}_{k-1}^n$ perpendicular to $y$, we have that

$$\beta(t)= r\cosh(t)\, y+r\sinh(t)\, v +\sqrt{1-r^2}\, B_2=\phi(\cosh(t)y+\sinh(t)v,u)\com{in the case that $\<v,v\>=-1$} $$

and

$$\beta(t)= r\cos(t)\, y+r\sin(t)\, v +\sqrt{1-r^2}\, B_2=\phi(\cos(t)y+\sin(t)v,u)\com{in the case that $\<v,v\>=1$} $$

satisfies that $\beta(0)=\phi(y,u)$, $\beta^\prime(0)=rv$ and

$$\frac{d \nu(\beta(t))}{dt}\big{|}_{t=0} = d\nu(rv)=-r\lambda\, v$$

Therefore,  $\lambda$ is a principal curvature with multiplicity $n-1$. Now, since $\<\frac{\partial \nu}{\partial u},v\>=0$ for every $v\in T_{\phi(y,u)}\cap \tilde{\R}_{k-1}^n$, we have that $\frac{\partial \phi}{\partial u}$ defines a principal direction, i.e. we must have that $\frac{\partial \nu}{\partial u}$ is a multiple of $\frac{\partial \phi}{\partial u}$.  A direct verification using the lemma (\ref{mu}) shows that,

$$\<\frac{\partial \nu}{\partial u},y\>=-\lambda^\prime \, r-\lambda r^\prime=-\mu\, r^\prime=-(nH-(n-1)\lambda)r^\prime$$

We also have that $\<\frac{\partial \phi}{\partial u},y\>=r^\prime$, therefore, since $r$ is not constant, we obtain that

$$\frac{\partial \nu}{\partial u}= d\nu(\frac{\partial \phi}{\partial u})=-\mu\, \frac{\partial \phi}{\partial u}=-(nH-(n-1)\lambda) \frac{\partial \phi}{\partial u}$$

It follows that the other principal curvature is $nH-(n-1)\lambda$. Therefore $\phi$ defines an immersion with constant mean curvature $H$, this proves the theorem.

\end{proof}

\begin{thm}\label{examples1 S1}
Let $c$ be a positive constant and let $g:(a_1,a_2)\to \R$ be a non constant positive solution of (\ref{Sphere 1}) such that $g(t)<\sqrt{c}$. For any pair of integers $k$ and $n$  such that $n\ge 2$ and $0\le k\le n$, if we define

$$\lambda=h+g^{-n},\quad \mu=h-(n-1)g^{-n}\com{and} r=\frac{g}{\sqrt{c}}\quad \theta(t)=\int_0^t\frac{r(\tau)\lambda(\tau)}{1-r(\tau)^2}d\tau$$

and

$$\tilde{\Sp}_{k}^{n-1}=\Sp_k^{n+1}\cap\tilde{\R}_{k}^n\com{where}\tilde{\R}_{k-1}^n=\{x\in\R_k^{n+2}:x_{n+1}=x_{n+2}=0\} $$

$$ B_2(t)=(0,\dots,0,\cos(t),\sin(t))\com{and} B_3(t)=(0,\dots,0,-\sin(t),\cos(t)) $$

then, the map $\phi:\tilde{\Sp}_{k}^{n-1}\times (a_1,a_2)\to \Sp_k^{n+1}$ given by

$$\phi(y,u)=r(u)\, y+\sqrt{1-r^2(u)} \, B_2(\theta(u))$$

defines a hypersurface with constant mean curvature h.

\end{thm}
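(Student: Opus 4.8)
The plan is to run the same template as the three preceding theorems, the only genuinely new ingredient being the causal characters of the frame vectors and the consequent sign in the governing ODE identity. First I would extract from (\ref{Sphere 1}) the relation satisfied by $r=g/\sqrt{c}$: since $(g^\prime)^2=c-g^2-g^2\lambda^2$, dividing by $c$ yields
$$(r^\prime)^2+r^2\lambda^2=1-r^2,$$
whose plus sign (in contrast to the $q_1$-equation driving Theorems \ref{examples1 Sm1}--\ref{examples3 Sm1}) is precisely what makes every norm computation below collapse to $1$. Alongside it I would record, directly from Lemma \ref{mu}, the relations $(r\lambda)^\prime=\mu r^\prime$ and $(n-1)\lambda+\mu=nh$.

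Next I would set down the inner products of the moving frame. Because $y$ ranges over $\tilde{\Sp}_k^{n-1}$ while $B_2,B_3$ occupy the last two (spacelike) coordinates, one has $\<y,y\>=1$, $\<B_2,B_2\>=\<B_3,B_3\>=1$, $\<B_2,B_3\>=0$, with $y$ orthogonal to both $B_2,B_3$, and $B_2^\prime=\frac{r\lambda}{1-r^2}B_3$ after the reparametrization by $\theta$. A direct verification then gives $\<\phi,\phi\>=1$ (so $\phi$ maps into $\Sp_k^{n+1}$) together with
$$\frac{\partial\phi}{\partial u}=r^\prime\,y-\frac{rr^\prime}{\sqrt{1-r^2}}\,B_2+\frac{r\lambda}{\sqrt{1-r^2}}\,B_3,$$
whose square norm equals $\frac{(r^\prime)^2+r^2\lambda^2}{1-r^2}=1$ by the relation above. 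The tangent space at $\phi(y,u)$ is $\{v+s\frac{\partial\phi}{\partial u}:v\in\tilde{\R}_k^n,\ \<v,y\>=0,\ s\in\R\}$.

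The crux, and the one place where this theorem differs arithmetically from its siblings, is the Gauss map. Imposing $\<\nu,\phi\>=0$ and $\<\nu,\frac{\partial\phi}{\partial u}\>=0$ on the ansatz $\nu=-r\lambda\,y+\beta B_2+\gamma B_3$ (automatically orthogonal to every $v\in\tilde{\R}_k^n$ with $\<v,y\>=0$) pins down
$$\nu=-r\lambda\,y+\frac{r^2\lambda}{\sqrt{1-r^2}}\,B_2+\frac{r^\prime}{\sqrt{1-r^2}}\,B_3;$$
I would stress that the $B_3$-coefficient now carries a plus sign, precisely because $B_3$ is spacelike here (unlike in Theorem \ref{examples3 Sm1}), and that consequently $\<\nu,\nu\>=\frac{(r^\prime)^2+r^2\lambda^2}{1-r^2}=1$, so the unit normal is spacelike. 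Since $\nu\in\mathrm{span}(y,B_2,B_3)$ and any $v\in\tilde{\R}_k^n$ with $\<v,y\>=0$ is orthogonal to $y,B_2,B_3$, one also gets $\<\nu,v\>=0$, confirming $\nu$ is a Gauss map of $\phi$.

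Finally I would read off the two principal curvatures exactly as before. For a unit $v\in\tilde{\R}_k^n$ with $\<v,y\>=0$, the curve $\beta(t)=\phi(\cosh t\,y+\sinh t\,v,u)$ when $\<v,v\>=-1$, or $\beta(t)=\phi(\cos t\,y+\sin t\,v,u)$ when $\<v,v\>=1$, satisfies $\beta^\prime(0)=rv$ and $\frac{d\nu(\beta(t))}{dt}\big|_{t=0}=-r\lambda\,v$, so $\lambda$ is a principal curvature of multiplicity $n-1$. Since $\<\frac{\partial\nu}{\partial u},v\>=0$ for every such $v$, the direction $\frac{\partial\phi}{\partial u}$ is principal; comparing $\<\frac{\partial\nu}{\partial u},y\>=-(r\lambda)^\prime=-\mu r^\prime=-(nh-(n-1)\lambda)r^\prime$ with $\<\frac{\partial\phi}{\partial u},y\>=r^\prime$ and using that $r$ is non-constant forces $\frac{\partial\nu}{\partial u}=-\mu\frac{\partial\phi}{\partial u}$, so the second principal curvature is $\mu=nh-(n-1)\lambda$. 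The mean curvature is then $\frac{1}{n}\big[(n-1)\lambda+\mu\big]=h$ by Lemma \ref{mu}. The only thing genuinely requiring care is the sign bookkeeping forced by the spacelike $B_3$ (which flips the $B_3$-term of $\nu$ and makes $\nu$ spacelike); every other step is the routine direct verification already rehearsed three times above.
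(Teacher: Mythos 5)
Your proposal is correct and is exactly the paper's argument: the paper's own proof merely states the key data (the identity $(r^\prime)^2+r^2\lambda^2=1-r^2$, the inner products $\<y,y\>=\<B_2,B_2\>=\<B_3,B_3\>=\<\nu,\nu\>=1$, and the Gauss map $\nu=-r\lambda\,y+\frac{r^2\lambda}{\sqrt{1-r^2}}B_2+\frac{r^\prime}{\sqrt{1-r^2}}B_3$) and appeals to "the same arguments as before," which is precisely the template you carried out in full. Your sign bookkeeping, the spacelike normal, and the identification of the principal curvatures $\lambda$ (multiplicity $n-1$) and $\mu=nh-(n-1)\lambda$ via Lemma \ref{mu} all agree with the paper.
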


\begin{proof}
The proof follows the same arguments as before. In this case

$$ (r^\prime)^2+r^2 \lambda^2=1-r^2$$

and

$$\<y,y\>=1,\quad\<B_2,B_2\>=1,\quad \<B_3,B_3\>=1 \com{and} \<\nu,\nu\>=1$$

where,

$$\nu=-r\lambda\,   y +\frac{r^2\, \lambda}{\sqrt{1-r^2}} \, B_2 + \frac{r^\prime}{\sqrt{1-r^2}} \, B_3$$

\end{proof}





\subsection{CMC hypersurfaces in $\Hi_k^{n+1}$}

Let us start this section by considering the following differential equations

\begin{eqnarray}\label{Hyperbolic m1}
(g^\prime(t))^2=q_2(g(t))\com{where} q_2(t)=c+t^2+t^2(h+t^{-n})^2
\end{eqnarray}

and
\begin{eqnarray}\label{Hyperbolic 1}
(g^\prime(t))^2=p_2(g(t))\com{where} p_1(t)=c+t^2-t^2(h+t^{-n})^2
\end{eqnarray}

where $c$ is a  constant.

\begin{thm}\label{example1 Hm1}
Let $c$ be a positive constant and let $g:(a_1,a_2)\to \R$ be a non constant positive solution of (\ref{Hyperbolic m1}). For any pair of integers $k$ and $n$  such that $n\ge 2$ and $2\le k\le n$, if we define

$$\lambda=h+g^{-n},\quad \mu=h-(n-1)g^{-n}\com{and} r=\frac{g}{\sqrt{c}}\quad \theta(t)=\int_0^t\frac{r(\tau)\lambda(\tau)}{r(\tau)^2+1}d\tau$$

and

$$\tilde{\Sp}_{k-2}^{n-1}=\Sp_k^{n+1}\cap\tilde{\R}_{k}^n\com{where}\tilde{\R}_{k-1}^n=\{x\in\R_k^{n+2}:x_{1}=x_{2}=0\} $$

$$ B_2(t)=(\cos(t),\sin(t),0,\dots,0)\com{and} B_3(t)=(-\sin(t),\cos(t),0,\dots,0) $$

then, the map $\phi:\tilde{\Sp}_{k-2}^{n-1}\times (a_1,a_2)\to \Sp_k^{n+1}$ given by

$$\phi(y,u)=r(u)\, y+\sqrt{1+r^2(u)} \, B_2(\theta(u))$$

defines a hypersurface with constant mean curvature h.

\end{thm}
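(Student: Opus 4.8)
The plan is to follow the same template as the proofs of Theorems \ref{examples1 Sm1}, \ref{examples2 Sm1} and \ref{examples3 Sm1}: build an explicit frame adapted to the splitting $\tilde{\Sp}_{k-2}^{n-1}\times(a_1,a_2)$, exhibit a Gauss map $\nu$, and read off the two principal curvatures, one equal to $\lambda$ with multiplicity $n-1$ and the other equal to $\mu=nh-(n-1)\lambda$, so that $H=\frac1n[(n-1)\lambda+\mu]=h$ by Lemma \ref{mu}. The only genuinely new inputs are the sign of the fundamental identity coming from (\ref{Hyperbolic m1}) and the causal character of $B_2,B_3$.

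First I would extract from the ODE the algebraic identity that drives every computation. Writing $g=\sqrt{c}\,r$ in (\ref{Hyperbolic m1}) and dividing by $c$ gives
$$(r^\prime)^2-r^2\lambda^2=1+r^2,$$
which is the analogue — now with the crucial $+r^2$ — of the identities $(r^\prime)^2-r^2\lambda^2=\pm1-r^2$ used in the spherical cases. I would pair this with the relation $(r\,\lambda)^\prime=\mu\,r^\prime$ furnished by Lemma \ref{mu}. Next I would record the frame data: since $k\ge2$, the vectors $B_2,B_3$ lie entirely in the first two (timelike) coordinates of $\R_k^{n+2}$, so $\<B_2,B_2\>=\<B_3,B_3\>=-1$, $\<B_2,B_3\>=0$ and $B_2^\prime=\frac{r\lambda}{r^2+1}B_3$, while $\<y,y\>=1$ for $y\in\tilde{\Sp}_{k-2}^{n-1}$. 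The hypothesis $k\ge2$ is exactly what makes both $B_2$ and $B_3$ timelike, and this is the structural reason the construction differs from the earlier ones.

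With these identities in hand the remaining steps are direct verifications. I would check that $\<\phi,\phi\>=-1$, placing the image in the hyperbolic quadric of this subsection, compute
$$\frac{\partial\phi}{\partial u}=r^\prime\,y+\frac{r r^\prime}{\sqrt{r^2+1}}\,B_2+\frac{r\lambda}{\sqrt{r^2+1}}\,B_3,$$
and confirm $\<\tfrac{\partial\phi}{\partial u},\tfrac{\partial\phi}{\partial u}\>=\frac{(r^\prime)^2-r^2\lambda^2}{r^2+1}=1$ via the identity above. After describing $T_{\phi(y,u)}$ as in the previous proofs, I would verify that
$$\nu=-r\lambda\,y-\frac{r^2\lambda}{\sqrt{r^2+1}}\,B_2-\frac{r^\prime}{\sqrt{r^2+1}}\,B_3$$
satisfies $\<\nu,\nu\>=-1$, $\<\nu,\tfrac{\partial\phi}{\partial u}\>=0$ and $\<\nu,v\>=0$ for every $v\in\tilde{\R}_k^n$ with $\<v,y\>=0$, so that $\nu$ is a Gauss map.

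Finally I would compute the principal curvatures exactly as before. For a unit $v\perp y$ in $\tilde{\R}_k^n$ the curve $\beta$, built from $\cosh,\sinh$ when $\<v,v\>=-1$ and from $\cos,\sin$ when $\<v,v\>=1$, stays in $\tilde{\Sp}_{k-2}^{n-1}$, has $\beta^\prime(0)=rv$, and yields $d\nu(rv)=-r\lambda v$, so $\lambda$ is a principal curvature of multiplicity $n-1$. For the remaining direction $\frac{\partial\phi}{\partial u}$ is principal, and comparing $y$-components through Lemma \ref{mu} gives $\<\tfrac{\partial\nu}{\partial u},y\>=-(r^\prime\lambda+r\lambda^\prime)=-\mu\,r^\prime=-(nh-(n-1)\lambda)\,r^\prime$ against $\<\tfrac{\partial\phi}{\partial u},y\>=r^\prime$, whence the second principal curvature is $\mu=nh-(n-1)\lambda$ and $H=h$. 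The main obstacle is not any single step but bookkeeping the two sign changes — the $+r^2$ in the identity and the timelike signs of $B_2,B_3$ — so that the norms of $\partial\phi/\partial u$ and of $\nu$ come out to $+1$ and $-1$ respectively; once the sign of the fundamental identity is pinned down, everything else propagates as in the spherical proofs.
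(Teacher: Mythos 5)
Your proposal is correct and takes essentially the same approach as the paper's (much terser) proof: the same key identity $(r^\prime)^2-r^2\lambda^2=1+r^2$, the same signature data $\<y,y\>=1$, $\<B_2,B_2\>=\<B_3,B_3\>=-1$, the same Gauss map $\nu$ with $\<\nu,\nu\>=-1$, and the same identification of the principal curvatures $\lambda$ (multiplicity $n-1$) and $\mu=nh-(n-1)\lambda$ via Lemma \ref{mu}. Your observation that $\<\phi,\phi\>=-1$ is also correct: the image in fact lies in $\Hi_{k-1}^{n+1}$ rather than in $\Sp_k^{n+1}$ as the statement literally says, a typo in the theorem consistent with this subsection treating hypersurfaces of the anti de Sitter type quadric.
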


\begin{proof}
The proof follows the same arguments as before. In this case

$$ (r^\prime)^2-r^2 \lambda^2=1+r^2$$

and

$$\<y,y\>=1,\quad\<B_2,B_2\>=-1,\quad \<B_3,B_3\>=-1 \com{and} \<\nu,\nu\>=-1$$

where,

$$\nu=-r\lambda\,   y -\frac{r^2\, \lambda}{\sqrt{1+r^2}} \, B_2 - \frac{r^\prime}{\sqrt{r^2+1}} \, B_3$$

\end{proof}


\begin{thm}\label{example2 Hm1}
Let $c$ be a negative constant and let $g:(a_1,a_2)\to \R$ be a non constant positive solution of (\ref{Hyperbolic m1}) such that $g(t)<\sqrt{-c}$. For any pair of integers $k$ and $n$  such that $n\ge 2$ and $2\le k\le n$, if we define

$$\lambda=h+g^{-n},\quad \mu=h-(n-1)g^{-n}\com{and} r=\frac{g}{\sqrt{-c}}\quad \theta(t)=\int_0^t\frac{r(\tau)\lambda(\tau)}{1-r(\tau)^2}d\tau$$

and

$$\tilde{\Hi}_{k-2}^{n-1}=\Hi_{k-1}^{n+1}\cap\tilde{\R}_{k-1}^n\com{where}\tilde{\R}_{k-1}^n=\{x\in\R_k^{n+2}:x_{k}=x_{k+1}=0\} $$

$$ B_2(t)=(0,\dots,0,\cosh(t),\sinh(t),0,\dots,0)\com{and} B_3(t)=(0,\dots,0,\sinh(t),\cosh(t),0,\dots,0) $$

then, the map $\phi:\tilde{\Hi}_{k-2}^{n-1}\times (a_1,a_2)\to \Hi_{k-1}^{n+1}$ given by

$$\phi(y,u)=r(u)\, y+\sqrt{1-r^2(u)} \, B_2(\theta(u))$$

defines a hypersurface with constant mean curvature h.

\end{thm}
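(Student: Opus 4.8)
The plan is to run the very same template used in Theorems \ref{examples2 Sm1} and \ref{examples3 Sm1}, adjusting only the ambient signature, since here the base is the anti--de Sitter slice $\tilde{\Hi}_{k-2}^{n-1}$ and the target is $\Hi_{k-1}^{n+1}$. First I would record the two algebraic identities that drive every computation of this kind. Differentiating $r=g/\sqrt{-c}$ and using the ODE $(g^\prime)^2=q_2(g)=c+g^2+g^2\lambda^2$ together with $c<0$ gives
$$(r^\prime)^2-r^2\lambda^2=-(1-r^2),$$
while Lemma \ref{mu} supplies $(r\lambda)^\prime=\lambda r^\prime+r\lambda^\prime=\mu r^\prime$. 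The hypothesis $g<\sqrt{-c}$ guarantees $r<1$, so $\sqrt{1-r^2}$ is real and $\theta$ is well defined. I would also record the inner products forced by the placement of the blocks: for $y\in\tilde{\Hi}_{k-2}^{n-1}$ one has $\<y,y\>=-1$, and since $B_2,B_3$ live in the $(x_k,x_{k+1})$--plane with $x_k$ timelike and $x_{k+1}$ spacelike, $\<B_2,B_2\>=-1$, $\<B_3,B_3\>=1$, $\<B_2,B_3\>=0$, $\<y,B_j\>=0$, and $\frac{d}{du}B_2(\theta)=\theta^\prime B_3=\frac{r\lambda}{1-r^2}B_3$.

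With these in hand the verification is mechanical. A direct computation gives $\<\phi,\phi\>=-r^2-(1-r^2)=-1$, so $\phi$ indeed lands in $\Hi_{k-1}^{n+1}$, and differentiating yields
$$\frac{\partial\phi}{\partial u}=r^\prime\,y-\frac{rr^\prime}{\sqrt{1-r^2}}\,B_2+\frac{r\lambda}{\sqrt{1-r^2}}\,B_3,$$
which, after collecting terms over the common denominator $1-r^2$ and invoking the first identity, satisfies $\<\partial_u\phi,\partial_u\phi\>=1$. I would then propose the normal
$$\nu=-r\lambda\,y+\frac{r^2\lambda}{\sqrt{1-r^2}}\,B_2-\frac{r^\prime}{\sqrt{1-r^2}}\,B_3,$$
exactly as in Theorem \ref{examples3 Sm1}, and check its three defining relations: $\<\nu,\phi\>=0$ and $\<\nu,\partial_u\phi\>=0$ follow from the definitions, while $\<\nu,\nu\>=\frac{(r^\prime)^2-r^2\lambda^2}{1-r^2}=-1$ by the first identity, so $\nu$ is a (timelike) Gauss map. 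Orthogonality to the base directions $v\in\tilde{\R}_{k-1}^n$ with $\<v,y\>=0$ is automatic, since $\nu$ is a combination of $y,B_2,B_3$, all perpendicular to such $v$.

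Finally I would compute the principal curvatures. For a unit $v\in\tilde{\R}_{k-1}^n$ with $\<v,y\>=0$ I use the curves $\beta(t)=\phi(\cosh(t)y+\sinh(t)v,u)$ when $\<v,v\>=1$ and $\beta(t)=\phi(\cos(t)y+\sin(t)v,u)$ when $\<v,v\>=-1$; in either case $\beta(0)=\phi(y,u)$, $\beta^\prime(0)=rv$, and since only the $y$--slot of $\nu$ varies along $\beta$, one gets $\frac{d}{dt}\nu(\beta(t))|_{t=0}=d\nu(rv)=-r\lambda\,v$, so $\lambda$ is a principal curvature of multiplicity $n-1$. For the remaining direction, $\partial_u\nu$ is tangent to the hypersurface and orthogonal to every such $v$, hence a multiple of $\partial_u\phi$; comparing $y$--components gives $\<\partial_u\nu,y\>=(r\lambda)^\prime=\mu r^\prime$ (this is where $\<y,y\>=-1$ enters) against $\<\partial_u\phi,y\>=-r^\prime$, so $\partial_u\nu=-\mu\,\partial_u\phi$ and the second principal curvature is $\mu=nh-(n-1)\lambda$. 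Lemma \ref{mu} then gives mean curvature $\frac1n\big((n-1)\lambda+\mu\big)=h$. The only genuine care needed is signature bookkeeping: $\<y,y\>=\<B_2,B_2\>=-1$ with $\<B_3,B_3\>=1$ reverses several signs relative to the de Sitter case, so the main obstacle is simply confirming that these reversals cancel to leave $\<\partial_u\phi,\partial_u\phi\>=1$ and $\<\nu,\nu\>=-1$ intact; once that is checked, the argument is identical to the earlier theorems.
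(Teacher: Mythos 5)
Your proposal is correct and follows essentially the same route as the paper: the paper's own proof just records the identity $(r^\prime)^2-r^2\lambda^2=-1+r^2$, the sign table $\<y,y\>=\<B_2,B_2\>=-1$, $\<B_3,B_3\>=1$, $\<\nu,\nu\>=-1$, and the same normal $\nu=-r\lambda\,y+\frac{r^2\lambda}{\sqrt{1-r^2}}B_2-\frac{r^\prime}{\sqrt{1-r^2}}B_3$, then appeals to the template of the earlier theorems, which is exactly what you carry out in full. Your careful handling of the $y$-component comparison (where $\<y,y\>=-1$ flips both $\<\partial_u\nu,y\>$ and $\<\partial_u\phi,y\>$, leaving the ratio $-\mu$ unchanged) is sound and matches the paper's conclusion $\partial_u\nu=-\mu\,\partial_u\phi$.
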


\begin{proof}
The proof follows the same arguments as before. In this case

$$ (r^\prime)^2-r^2 \lambda^2=-1+r^2$$

and

$$\<y,y\>=-1,\quad\<B_2,B_2\>=-1,\quad \<B_3,B_3\>=1 \com{and} \<\nu,\nu\>=-1$$

where,

$$\nu=-r\lambda\,   y +\frac{r^2\, \lambda}{\sqrt{1-r^2}} \, B_2 - \frac{r^\prime}{\sqrt{1-r^2}} \, B_3$$

\end{proof}

\begin{thm}\label{rem example2 Hm1}
With the same notation as in the Theorem (\ref{example2 Hm1}), if $-1<r_0<1$ is any non zero constant, then the map $\phi:\tilde{\Hi}_{k-2}^{n-1}\times \R \to \Hi_{k-1}^{n+1}$ given by

$$\phi(y,t)=r_0\, y+\sqrt{1-r_0^2} \, B_2(t)$$

defines a complete hypersurface with constant mean curvature

$$ \frac{r_0}{n\, \sqrt{1-r_0^2}}\, +\, \frac{(n-1)\, \sqrt{1-r_0^2}}{n\, r_0}$$

In the case $k=2$, these examples are part of the examples known as hyperbolic cylinder in the anti de Sitter space $\Hi_1^{n+1}$.

\end{thm}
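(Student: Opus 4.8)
The plan is to read this statement as the degenerate, constant-solution case of Theorem~\ref{example2 Hm1}, in which the profile function $g$, and hence $r=g/\sqrt{-c}$, is replaced by the constant $r_0$, so that $r^\prime\equiv 0$. First I would check that $\phi$ lands in $\Hi_{k-1}^{n+1}$ and is an immersion. Since $\langle y,y\rangle=-1$, $\langle B_2,B_2\rangle=-1$, and $y$ and $B_2$ have disjoint supports, one gets $\langle\phi,\phi\rangle=-r_0^2-(1-r_0^2)=-1$; moreover $\frac{\partial\phi}{\partial t}=\sqrt{1-r_0^2}\,B_3$ together with the $(n-1)$ base directions $r_0 v$ (for $v\perp y$ in $\tilde{\R}_{k-1}^n$) span a nondegenerate $n$-plane, the off-diagonal inner products vanishing because the supports are disjoint.

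Next I would carry over the unit normal from the parent theorem, specialized to $r^\prime=0$, namely $\nu=-r_0\lambda\,y+\frac{r_0^2\lambda}{\sqrt{1-r_0^2}}\,B_2=-\sqrt{1-r_0^2}\,y+r_0\,B_2$, and verify $\langle\nu,\nu\rangle=-1$ together with $\nu\perp\phi$ and $\nu\perp T\phi$. Here the constant $\lambda$ is pinned down by the algebraic identity $(r^\prime)^2-r^2\lambda^2=-1+r^2$ of that theorem: setting $r^\prime=0$ and $r=r_0$ forces $r_0^2\lambda^2=1-r_0^2$, i.e.\ $\lambda=\frac{\sqrt{1-r_0^2}}{r_0}$ (up to the choice of orientation of $\nu$).

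The two principal curvatures are then computed separately. The base directions go through verbatim from Theorem~\ref{example2 Hm1}: differentiating $\nu$ along a curve $\gamma(s)$ in $\tilde{\Hi}_{k-2}^{n-1}$ with $\gamma^\prime(0)=v$ gives $d\nu(r_0 v)=-\sqrt{1-r_0^2}\,v$, so $\frac{\sqrt{1-r_0^2}}{r_0}$ is a principal curvature of multiplicity $n-1$. The crucial point is the fiber direction: the parent theorem identified the remaining principal curvature as $\mu=nh-(n-1)\lambda$ by invoking the relation $(r\lambda)^\prime=\mu r^\prime$ and cancelling the factor $r^\prime\neq0$, a step that collapses to $0=0$ once $r$ is constant and therefore must be redone from scratch. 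I would compute directly $\frac{\partial\nu}{\partial t}=r_0\,B_2^\prime=r_0\,B_3=\frac{r_0}{\sqrt{1-r_0^2}}\frac{\partial\phi}{\partial t}$, which exhibits $\frac{\partial\phi}{\partial t}$ as a principal direction and produces $\frac{r_0}{\sqrt{1-r_0^2}}$ as the second principal curvature. Averaging yields $H=\frac1n\bigl[(n-1)\frac{\sqrt{1-r_0^2}}{r_0}+\frac{r_0}{\sqrt{1-r_0^2}}\bigr]$, which is exactly the stated value and is manifestly independent of $(y,t)$.

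Finally, completeness and the $k=2$ interpretation. Because the off-diagonal inner products $\langle r_0 v,\frac{\partial\phi}{\partial t}\rangle$ vanish, the induced metric splits as an orthogonal product of the metric of the scaled model space $r_0\,\tilde{\Hi}_{k-2}^{n-1}$ and the metric of the fiber $\R$ traced by $t\mapsto\sqrt{1-r_0^2}\,B_2(t)$; each factor is a complete (pseudo-)Riemannian space form, so their product, and hence the immersion on $\tilde{\Hi}_{k-2}^{n-1}\times\R$, is geodesically complete. When $k=2$ the base $\tilde{\Hi}_0^{n-1}$ is an ordinary hyperbolic space and the fiber a spacelike line, so $\phi$ is recognizably the classical hyperbolic cylinder in $\Hi_1^{n+1}$. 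I expect the fiber-direction principal curvature to be the main obstacle, precisely because the one-line argument of the parent theorem degenerates there; the accompanying bookkeeping of signs (the causal characters of $y$, $B_2$, $B_3$ and the orientation of $\nu$) is what must be handled carefully in order to land on the stated form of $H$.
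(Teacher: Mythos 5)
Your overall plan is the right one: the paper in fact prints no proof of this theorem, so everything has to be redone from scratch by specializing the frame of Theorem~(\ref{example2 Hm1}) to constant $r=r_0$, and you correctly identify that the fiber principal curvature cannot be read off from the parent argument (which divides by $r^\prime$) but must be computed directly. Your normal vector, your base-direction computation, the completeness argument, and the $k=2$ identification are all sound. The gap is a sign inconsistency at exactly the step you flagged as crucial. In the base directions you use the paper's convention, under which $d\nu(X)=-\kappa X$ means $\kappa$ is a principal curvature: from $d\nu(r_0v)=-\sqrt{1-r_0^2}\,v$ you extract $\kappa=+\frac{\sqrt{1-r_0^2}}{r_0}$. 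In the fiber direction you correctly compute $d\nu\bigl(\frac{\partial\phi}{\partial t}\bigr)=\frac{\partial\nu}{\partial t}=+\frac{r_0}{\sqrt{1-r_0^2}}\frac{\partial\phi}{\partial t}$, but then declare $+\frac{r_0}{\sqrt{1-r_0^2}}$ to be the principal curvature, i.e.\ you silently switch to the convention $d\nu(X)=+\kappa X$. Under one single convention (the paper's $S=-d\nu$, the one that makes $(n-1)\lambda+\mu=nh$ compute the mean curvature in the parent theorems), the fiber principal curvature is $-\frac{r_0}{\sqrt{1-r_0^2}}$, and the mean curvature of this hypersurface is
$$\frac{(n-1)\,\sqrt{1-r_0^2}}{n\, r_0}\;-\;\frac{r_0}{n\,\sqrt{1-r_0^2}},$$
a difference, not the sum.

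This cannot be repaired by changing orientation or convention: the product of the two principal curvatures is $\frac{\sqrt{1-r_0^2}}{r_0}\cdot\bigl(-\frac{r_0}{\sqrt{1-r_0^2}}\bigr)=-1$, and this product is invariant under $\nu\mapsto-\nu$ and under $S\mapsto-S$, so the two curvatures can never be given the same sign. A bare-hands check for $k=n=2$ confirms it: with $\phi=(r_0\cosh s,\sqrt{1-r_0^2}\cosh t,\sqrt{1-r_0^2}\sinh t,r_0\sinh s)$ and $\nu=(-\sqrt{1-r_0^2}\cosh s,r_0\cosh t,r_0\sinh t,-\sqrt{1-r_0^2}\sinh s)$ one gets $\partial_s\nu=-\frac{\sqrt{1-r_0^2}}{r_0}\,\partial_s\phi$ but $\partial_t\nu=+\frac{r_0}{\sqrt{1-r_0^2}}\,\partial_t\phi$. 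Geometrically, the difference formula is the correct one: at $r_0^2=\frac{n-1}{n}$ it gives $H=0$, recovering the classical maximal hyperbolic cylinder (product of hyperbolic factors of radii $\sqrt{(n-1)/n}$ and $\sqrt{1/n}$) in the anti de Sitter space, whereas the stated sum is bounded below by $\frac{2\sqrt{n-1}}{n}$ and can never vanish. The sum pattern is genuinely correct in the de Sitter companion, Theorem~(\ref{rem example1 Sm1}), because there $\<y,y\>=+1$ and $\<\nu,\phi\>=0$ forces the two coefficients of $\nu$ to have the same relative sign, so both factors bend coherently (product of principal curvatures $+1$); in the anti de Sitter case $\<y,y\>=-1$ forces the mixed signs you actually computed. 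So your derivation reproduces the printed formula only by virtue of the inconsistent convention; carried out consistently, it does not confirm the statement but corrects it to the difference.
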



\begin{thm}\label{example3 Hm1}
Let $c$ be a negative constant and let $g:(a_1,a_2)\to \R$ be a non constant positive solution of (\ref{Hyperbolic m1}) such that $g(t)>\sqrt{-c}$. For any pair of integers $k$ and $n$  such that $n\ge 2$ and $2\le k\le n$, if we define

$$\lambda=h+g^{-n},\quad \mu=h-(n-1)g^{-n}\com{and} r=\frac{g}{\sqrt{-c}}\quad \theta(t)=\int_0^t\frac{r(\tau)\lambda(\tau)}{r(\tau)^2-1}d\tau$$

and

$$\tilde{\Hi}_{k-2}^{n-1}=\Hi_{k-1}^{n+1}\cap\tilde{\R}_{k-1}^n\com{where}\tilde{\R}_{k-1}^n=\{x\in\R_k^{n+2}:x_{k}=x_{k+1}=0\} $$

$$ B_2(t)=(0,\dots,0,\sinh(t),\cosh(t),0,\dots,0)\com{and} B_3(t)=(0,\dots,0,\cosh(t),\sinh(t),0,\dots,0) $$

then the map $\phi:\tilde{\Hi}_{k-2}^{n-1}\times (a_1,a_2)\to \Hi_{k-1}^{n+1}$ given by

$$\phi(y,u)=r(u)\, y+\sqrt{r^2(u)-1} \, B_2(\theta(u))$$

defines a hypersurface with constant mean curvature h.

\end{thm}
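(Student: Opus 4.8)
The plan is to run the same machinery used in the de Sitter theorems, adjusting every sign to the present signature. The ambient data are $\<y,y\>=-1$ (since $y\in\tilde{\Hi}_{k-2}^{n-1}$), together with $\<B_2,B_2\>=1$, $\<B_3,B_3\>=-1$, $\<B_2,B_3\>=0$, and $B_2^\prime=B_3$, so that $\frac{d}{du}B_2(\theta)=\frac{r\lambda}{r^2-1}B_3$. The two identities that drive everything come next: substituting $g=\sqrt{-c}\,r$ into (\ref{Hyperbolic m1}) and dividing by $-c>0$ yields $(r^\prime)^2-r^2\lambda^2=r^2-1$, while $(r\lambda)^\prime=\mu r^\prime$ is precisely Lemma (\ref{mu}).

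First I would check that $\phi$ is an immersion into $\Hi_{k-1}^{n+1}$. Since $y$ and $B_2$ occupy complementary coordinate blocks, $\<\phi,\phi\>=r^2\<y,y\>+(r^2-1)\<B_2,B_2\>=-r^2+(r^2-1)=-1$. Differentiating gives $\frac{\partial\phi}{\partial u}=r^\prime\,y+\frac{rr^\prime}{\sqrt{r^2-1}}B_2+\frac{r\lambda}{\sqrt{r^2-1}}B_3$, and the first identity gives $\<\frac{\partial\phi}{\partial u},\frac{\partial\phi}{\partial u}\>=\frac{(r^\prime)^2-r^2\lambda^2}{r^2-1}=1$. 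The hypothesis $g>\sqrt{-c}$, i.e. $r>1$, is exactly what keeps $\sqrt{r^2-1}$ and all these quotients well defined, so it is the standing assumption throughout.

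Next I would propose the normal $\nu=-r\lambda\,y-\frac{r^2\lambda}{\sqrt{r^2-1}}B_2-\frac{r^\prime}{\sqrt{r^2-1}}B_3$ and verify $\<\nu,\nu\>=\frac{r^2\lambda^2-(r^\prime)^2}{r^2-1}=-1$, $\<\nu,\phi\>=0$, $\<\nu,\frac{\partial\phi}{\partial u}\>=0$, and $\<\nu,v\>=0$ for every $v\in\tilde{\R}_{k-1}^n$ with $\<v,y\>=0$ (the last automatically, since such $v$ have no $B_2,B_3$ components). To extract the principal curvatures I would slide $y$ inside $\tilde{\Hi}_{k-2}^{n-1}$: for a unit $v\perp y$, set $\beta(t)=\phi(\cosh t\,y+\sinh t\,v,u)$ when $\<v,v\>=1$ and $\beta(t)=\phi(\cos t\,y+\sin t\,v,u)$ when $\<v,v\>=-1$, so that $\beta(0)=\phi(y,u)$ and $\beta^\prime(0)=rv$. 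Because only the $y$-term of $\nu$ varies along $\beta$, one gets $d\nu(rv)=-r\lambda\,v$, so $\lambda$ is a principal curvature of multiplicity $n-1$.

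Finally, since $y$ is independent of $u$, the vector $\frac{\partial\nu}{\partial u}$ is orthogonal to each such $v$, to $\nu$, and to $\phi$, hence is a multiple of $\frac{\partial\phi}{\partial u}$. Comparing $y$-components, using $\<y,y\>=-1$ and $(r\lambda)^\prime=\mu r^\prime$, gives $\<\frac{\partial\nu}{\partial u},y\>=\mu r^\prime$ and $\<\frac{\partial\phi}{\partial u},y\>=-r^\prime$; since $r$ is non-constant this forces $\frac{\partial\nu}{\partial u}=-\mu\,\frac{\partial\phi}{\partial u}$, so the second principal curvature is $\mu=nh-(n-1)\lambda$ by Lemma (\ref{mu}). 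Averaging, $\frac1n[(n-1)\lambda+\mu]=h$. The only real obstacle is sign bookkeeping: the negative entries $\<y,y\>=-1$ and $\<B_3,B_3\>=-1$ are what force the timelike normal $\<\nu,\nu\>=-1$, and they must be tracked consistently through the norm of $\frac{\partial\phi}{\partial u}$, through $\<\nu,\nu\>$, and through the final ratio — though in that ratio the common factor $\<y,y\>=-1$ cancels, and the curvatures emerge exactly as in the de Sitter analogues.
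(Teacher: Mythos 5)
Your proof is correct and follows essentially the same route as the paper: the paper's own proof simply cites the earlier de Sitter computations and records the key identity $(r^\prime)^2-r^2\lambda^2=r^2-1$, the signs $\<y,y\>=-1$, $\<B_2,B_2\>=1$, $\<B_3,B_3\>=-1$, $\<\nu,\nu\>=-1$, and the same normal $\nu=-r\lambda\,y-\frac{r^2\lambda}{\sqrt{r^2-1}}B_2-\frac{r^\prime}{\sqrt{r^2-1}}B_3$ that you use. Your verification of the tangent/normal relations, the geodesic variation giving $\lambda$ with multiplicity $n-1$, and the $y$-component comparison via Lemma (\ref{mu}) yielding the second principal curvature $nh-(n-1)\lambda$ is exactly the argument the paper intends by ``the same arguments as before.''
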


\begin{proof}
The proof follows the same arguments as before. In this case

$$ (r^\prime)^2-r^2 \lambda^2=-1+r^2$$

and

$$\<y,y\>=-1,\quad\<B_2,B_2\>=1,\quad \<B_3,B_3\>=-1 \com{and} \<\nu,\nu\>=-1$$

where,

$$\nu=-r\lambda\,   y -\frac{r^2\, \lambda}{\sqrt{r^2-1}} \, B_2 - \frac{r^\prime}{\sqrt{r^2-1}} \, B_3$$

\end{proof}

\begin{rem}\label{rem1 example3 Hm1}
When $k=2$, the previous theorem  provides a family of space like hypersurfaces in the anti de Sitter space, it is surprising that when we make $r=r_0$  constant, we do not obtain space like hypersurfaces but Lorentzian.

\end{rem}



\begin{thm}\label{rem2 example3 H1}
With the same notation as in Theorem (\ref{example3 Hm1}), if $r_0$, with $r_0^2>1$,  is any constant, then the map $\phi:\tilde{\Hi}_{k-2}^{n-1}\times \R \to \Hi_{k-1}^{n+1}$ given by

$$\phi(y,t)=r_0\, y+\sqrt{r_0^2-1} \, B_2(t)$$

defines a cmc hypersurface with constant mean curvature

$$ \frac{r_0}{n\, \sqrt{r_0^2-1}}\, +\, \frac{(n-1)\, \sqrt{r_0^2-1}}{n\, r_0}$$

\end{thm}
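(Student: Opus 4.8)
The plan is to verify the claim by a direct computation, treating these constant-$r_0$ hypersurfaces as a genuinely separate family rather than as a specialization of Theorem \ref{example3 Hm1}. Indeed, setting $r\equiv r_0$ in the identity $(r')^2-r^2\lambda^2=-1+r^2$ established in that proof would force $-r_0^2\lambda^2=r_0^2-1>0$, which is impossible; so a constant $r_0$ with $r_0^2>1$ never solves the defining ODE (\ref{Hyperbolic m1}), and in particular the timelike Gauss map of Theorem \ref{example3 Hm1} is not available here. Since $r_0$ is constant the reparametrization $\theta$ becomes affine in $t$, so using $B_2(t)$ directly in place of $B_2(\theta(t))$ costs nothing. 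The first step is then to confirm the image lies in the ambient space: using $\<y,y\>=-1$, $\<B_2,B_2\>=1$, and $\<y,B_2\>=0$ (because $y\in\tilde{\R}_{k-1}^n$ is supported away from the two slots carrying $B_2$), one gets $\<\phi,\phi\>=-r_0^2+(r_0^2-1)=-1$, so $\phi$ maps into $\Hi_{k-1}^{n+1}$.

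Next I would read off the tangent space and build a unit normal from scratch. Differentiation gives $\partial_t\phi=\sqrt{r_0^2-1}\,B_3$, and since $\<B_3,B_3\>=-1$ this direction is \emph{timelike}, so the hypersurface is Lorentzian; this is exactly the surprise recorded in Remark \ref{rem1 example3 Hm1}. The remaining tangent vectors are the $r_0 v$ with $v\in T_y\tilde{\Hi}_{k-2}^{n-1}$. Imposing $\<\nu,\phi\>=0$, $\<\nu,v\>=0$ and $\<\nu,B_3\>=0$ confines $\nu$ to the span of $y$ and $B_2$, and after normalization I would take $\nu=-\sqrt{r_0^2-1}\,y-r_0\,B_2$, for which $\<\nu,\nu\>=-(r_0^2-1)+r_0^2=1$; as expected for a Lorentzian hypersurface of $\Hi_{k-1}^{n+1}$ the normal is space like, in contrast to the timelike normal of the non-constant family.

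The principal curvatures then follow by differentiating $\nu$. Moving $y$ along a curve in $\tilde{\Hi}_{k-2}^{n-1}$ with velocity $v\perp y$ gives $d\nu(r_0 v)=-\sqrt{r_0^2-1}\,v=-\tfrac{\sqrt{r_0^2-1}}{r_0}\,(r_0 v)$, so $\tfrac{\sqrt{r_0^2-1}}{r_0}$ is a principal curvature of multiplicity $n-1$; differentiating in $t$ gives $\partial_t\nu=-r_0 B_3=-\tfrac{r_0}{\sqrt{r_0^2-1}}\,\partial_t\phi$, so the last principal curvature is $\tfrac{r_0}{\sqrt{r_0^2-1}}$. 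Averaging these $n$ numbers yields precisely $\tfrac{r_0}{n\sqrt{r_0^2-1}}+\tfrac{(n-1)\sqrt{r_0^2-1}}{n\,r_0}$, which is constant because $r_0$ is, proving the theorem.

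There is no serious obstacle: every step is an explicit algebraic verification and the constancy of $h$ is immediate once $r_0$ is fixed. The only points demanding care are bookkeeping the indefinite inner products to confirm the timelike/space like split of the frame, and choosing the orientation of $\nu$ so that both principal curvatures carry the sign producing the stated positive value of $h$. It is also worth emphasizing the conceptual point above, namely that this family must be constructed independently of Theorem \ref{example3 Hm1}, since its normal has the opposite causal character.
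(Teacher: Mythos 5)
Your proof is correct and complete. Note that the paper actually states this theorem without proof --- like its other constant-radius (cylinder-type) results, it is left as a direct verification --- and your computation is exactly the verification the paper's framework calls for: the frame $\{y, B_2, B_3\}$, an explicit unit normal in the span of $y$ and $B_2$, and the two principal curvatures read off from $d\nu$, following the same pattern as the proofs of Theorems \ref{examples1 Sm1}--\ref{example3 Hm1} (with the paper's convention that a principal curvature $\kappa$ satisfies $d\nu(w)=-\kappa\,w$). Your conceptual point that the constant family cannot be a specialization of the ODE-based Theorem \ref{example3 Hm1}, and that the causal character flips (spacelike normal, Lorentzian hypersurface), is precisely the content of Remark \ref{rem1 example3 Hm1}, so your argument supplies the omitted proof in the intended way.
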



\begin{thm}\label{example4 H1}
Let $c$ be a negative constant and let $g:(a_1,a_2)\to \R $ be a non constant positive solution of (\ref{Hyperbolic 1}) such that $g(t)>\sqrt{-c}$. For any pair of integers $k$ and $n$  such that $n\ge 2$ and $1\le k\le n$, if we define

$$\lambda=h+g^{-n},\quad \mu=h-(n-1)g^{-n}\com{and} r=\frac{g}{\sqrt{-c}}\quad \theta(t)=\int_0^t\frac{r(\tau)\lambda(\tau)}{r(\tau)^2-1}d\tau$$

and

$$\tilde{\Hi}_{k-1}^{n-1}=\Hi_{k-1}^{n+1}\cap\tilde{\R}_{k}^n\com{where}\tilde{\R}_{k}^n=\{x\in\R_k^{n+2}:x_{n+1}=x_{n+2}=0\} $$

$$ B_2(t)=(0,\dots,0,\cos(t),\sin(t))\com{and} B_3(t)=(0,\dots,0,-\sin(t),\cos(t)) $$

then, the map $\phi:\tilde{\Hi}_{k-1}^{n-1}\times (a_1,a_2)\to \Hi_{k-1}^{n+1}$ given by

$$\phi(y,u)=r(u)\, y+\sqrt{r^2(u)-1} \, B_2(\theta(u))$$

defines a hypersurface with constant mean curvature h.

\end{thm}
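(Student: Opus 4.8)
The plan is to follow the proofs of Theorems~\ref{examples1 Sm1} and~\ref{example3 Hm1} line by line, adjusting only the signature bookkeeping dictated by the present choice of ambient coordinates. First I would extract the algebraic identity forced by the differential equation~(\ref{Hyperbolic 1}). Writing $g=\sqrt{-c}\,r$ and substituting into $(g^\prime)^2=p_2(g)$, then dividing by $-c>0$, yields
$$(r^\prime)^2+r^2\lambda^2=r^2-1,$$
while Lemma~\ref{mu} supplies $(r\lambda)^\prime=\mu\,r^\prime$. The hypothesis $g>\sqrt{-c}$ gives $r>1$, so $\sqrt{r^2-1}$ is real and $\theta$ is well defined.

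Next I would record the inner products of the building blocks. Since $y\in\tilde{\Hi}_{k-1}^{n-1}$ we have $\langle y,y\rangle=-1$, and because $B_2,B_3$ occupy the last two (positive) coordinates of $\R_k^{n+2}$ we get $\langle B_2,B_2\rangle=\langle B_3,B_3\rangle=1$, $\langle B_2,B_3\rangle=0$ and $B_2^\prime=B_3$. Substituting these into $\phi$ shows $\langle\phi,\phi\rangle=-r^2+(r^2-1)=-1$, so $\phi$ maps into $\Hi_{k-1}^{n+1}$, and a short computation using the identity above shows that
$$\frac{\partial\phi}{\partial u}=r^\prime\,y+\frac{rr^\prime}{\sqrt{r^2-1}}\,B_2+\frac{r\lambda}{\sqrt{r^2-1}}\,B_3$$
is a unit space-like vector.

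I would then propose the Gauss map
$$\nu=-r\lambda\,y-\frac{r^2\lambda}{\sqrt{r^2-1}}\,B_2+\frac{r^\prime}{\sqrt{r^2-1}}\,B_3,$$
and verify $\langle\nu,\phi\rangle=\langle\nu,\partial\phi/\partial u\rangle=0$, that $\nu$ is orthogonal to every $v\in\tilde{\R}_k^n$ with $\langle v,y\rangle=0$, and that $\langle\nu,\nu\rangle=1$ (this last step uses $(r^\prime)^2=r^2-1-r^2\lambda^2$). The two principal curvatures are then read off exactly as in the earlier proofs: for a unit $v$ tangent to $\tilde{\Hi}_{k-1}^{n-1}$ the curve $\beta(t)=\phi(\cosh(t)y+\sinh(t)v,u)$ when $\langle v,v\rangle=1$, or $\beta(t)=\phi(\cos(t)y+\sin(t)v,u)$ when $\langle v,v\rangle=-1$, satisfies $\beta(0)=\phi(y,u)$, $\beta^\prime(0)=rv$ and $\frac{d}{dt}\nu(\beta(t))|_{t=0}=-r\lambda\,v$, so $\lambda$ is a principal curvature of multiplicity $n-1$. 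Differentiating $\nu$ in $u$ and pairing with $y$, the relations $\langle\partial\nu/\partial u,y\rangle=(r\lambda)^\prime=\mu r^\prime$ and $\langle\partial\phi/\partial u,y\rangle=-r^\prime$ force $\partial\nu/\partial u=-\mu\,\partial\phi/\partial u$, so the remaining principal curvature is $\mu$. Lemma~\ref{mu} gives $(n-1)\lambda+\mu=nh$, hence the mean curvature equals $h$.

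The one point requiring genuine care is sign discipline. Here $\langle y,y\rangle=-1$, opposite to the de Sitter Theorem~\ref{examples1 Sm1}, so both $\langle\partial\nu/\partial u,y\rangle$ and $\langle\partial\phi/\partial u,y\rangle$ change sign relative to that computation; these two flips must cancel so that their ratio still yields $-\mu$. Similarly, the signs of the $B_2$ and $B_3$ components of $\nu$ must be pinned down so that $\langle\nu,\nu\rangle=1$ and both orthogonality conditions hold simultaneously. This is the only delicate step, and it is a finite verification rather than a conceptual difficulty.
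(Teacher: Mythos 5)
Your proposal is correct and takes essentially the same approach as the paper: the paper's proof is just a citation of the earlier template together with the identity $(r^\prime)^2+r^2\lambda^2=r^2-1$, the signs $\langle y,y\rangle=-1$, $\langle B_2,B_2\rangle=\langle B_3,B_3\rangle=1$, $\langle\nu,\nu\rangle=1$, and exactly the Gauss map $\nu=-r\lambda\,y-\frac{r^2\lambda}{\sqrt{r^2-1}}\,B_2+\frac{r^\prime}{\sqrt{r^2-1}}\,B_3$ that you write down. The sign bookkeeping you flag (the two compensating flips in $\langle\partial\nu/\partial u,y\rangle$ and $\langle\partial\phi/\partial u,y\rangle$ caused by $\langle y,y\rangle=-1$, together with the hyperbolic/circular split of the curves $\beta$ according to $\langle v,v\rangle=\pm1$) is precisely the content the paper compresses into ``the proof follows the same arguments as before.''
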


\begin{proof}
The proof follows the same arguments as before. In this case

$$ (r^\prime)^2+r^2 \lambda^2=r^2-1$$

and

$$\<y,y\>=-1,\quad\<B_2,B_2\>=1,\quad \<B_3,B_3\>=1 \com{and} \<\nu,\nu\>=1$$

where,

$$\nu=-r\lambda\,   y -\frac{r^2\, \lambda}{\sqrt{r^2-1}} \, B_2 + \frac{r^\prime}{\sqrt{r^2-1}} \, B_3$$

\end{proof}

\begin{thm}\label{example5 H1}
Let $c$ be a positive constant and let $g:(a_1,a_2)\to \R $ be a non constant positive solution of (\ref{Hyperbolic 1}) For any pair of integers $k$ and $n$  such that $n\ge 2$ and $1\le k\le n$, if we define

$$\lambda=h+g^{-n},\quad \mu=h-(n-1)g^{-n}\com{and} r=\frac{g}{\sqrt{c}}\quad \theta(t)=\int_0^t\frac{r(\tau)\lambda(\tau)}{r(\tau)^2+1}d\tau$$

and

$$\tilde{\Sp}_{k-1}^{n-1}=\Sp_k^{n+1}\cap\tilde{\R}_{k-1}^n\com{where}\tilde{\R}_{k-1}^n=\{x\in\R_k^{n+2}:x_{k}=x_{k+1}=0\} $$

$$ B_2(t)=(0,\dots,0,\cosh(t),\sinh(t),0,\dots,0)\com{and} B_3(t)=(0,\dots,0,\sinh(t),\cosh(t),0,\dots,0) $$

then, the map $\phi:\tilde{\Sp}_{k-1}^{n-1}\times (a_1,a_2)\to \Hi_{k-1}^{n+1}$ given by

$$\phi(y,u)=r(u)\, y+\sqrt{r^2(u)+1} \, B_2(\theta(u))$$

defines a hypersurface with constant mean curvature h.

\end{thm}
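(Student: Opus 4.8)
The plan is to follow the template established in Theorems \ref{examples1 Sm1}--\ref{example4 H1}, since the only thing that changes from case to case is the signature bookkeeping. First I would turn the defining ODE (\ref{Hyperbolic 1}) into an identity for $r$. Writing $p_2(g)=c+g^2-g^2\lambda^2$ (using $\lambda=h+g^{-n}$) and substituting $g=\sqrt{c}\,r$ with $c>0$, dividing through by $c$ gives the key relation
$$(r^\prime)^2+r^2\lambda^2=r^2+1,$$
which is exactly the identity quoted in the analogous Theorem \ref{example4 H1} with $r^2-1$ replaced by $r^2+1$. Alongside this I would record $(r\lambda)^\prime=\mu r^\prime$ from Lemma \ref{mu}, together with $B_2^\prime=B_3$, $B_3^\prime=B_2$ and $\theta^\prime=\frac{r\lambda}{r^2+1}$, so that $\frac{d}{du}B_2(\theta)=\frac{r\lambda}{r^2+1}B_3$.

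Next I would fix the signature of the building blocks. Because $y\in\tilde{\Sp}_{k-1}^{n-1}$ we have $\<y,y\>=1$, while the two nonzero slots of $B_2,B_3$ occupy coordinates $k$ (a negative direction) and $k+1$ (a positive direction), so $\<B_2,B_2\>=-1$, $\<B_3,B_3\>=1$ and $\<B_2,B_3\>=0$; moreover $y$ is orthogonal to $B_2$ and $B_3$ since their supports are disjoint. A direct expansion then yields $\<\phi,\phi\>=r^2-(r^2+1)=-1$, so $\phi$ genuinely maps into $\Hi_{k-1}^{n+1}$. I would compute
$$\frac{\partial\phi}{\partial u}=r^\prime\,y+\frac{rr^\prime}{\sqrt{r^2+1}}\,B_2+\frac{r\lambda}{\sqrt{r^2+1}}\,B_3$$
and check, via the key identity, that $\<\frac{\partial\phi}{\partial u},\frac{\partial\phi}{\partial u}\>=\frac{(r^\prime)^2+r^2\lambda^2}{r^2+1}=1$, so the $u$-curves are unit speed and $T_{\phi(y,u)}=\{v+s\,\frac{\partial\phi}{\partial u}:v\in\tilde{\R}_{k-1}^n,\ \<v,y\>=0,\ s\in\R\}$.

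The core of the argument is the Gauss map. Guided by Theorem \ref{example4 H1}, I would take
$$\nu=-r\lambda\,y-\frac{r^2\lambda}{\sqrt{r^2+1}}\,B_2+\frac{r^\prime}{\sqrt{r^2+1}}\,B_3,$$
and verify $\<\nu,\phi\>=0$, $\<\nu,\frac{\partial\phi}{\partial u}\>=0$, and $\<\nu,v\>=0$ for $v\in\tilde{\R}_{k-1}^n$ with $\<v,y\>=0$ (the last being automatic from disjoint supports), so that $\nu$ is a legitimate unit normal; the key identity again gives $\<\nu,\nu\>=\frac{r^2\lambda^2+(r^\prime)^2}{r^2+1}=1$, i.e. $\nu$ is spacelike. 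To extract the principal curvatures I would, for a unit $v\in\tilde{\R}_{k-1}^n$ perpendicular to $y$, form $\beta(t)=\phi(\cos t\,y+\sin t\,v,u)$ when $\<v,v\>=1$ and $\beta(t)=\phi(\cosh t\,y+\sinh t\,v,u)$ when $\<v,v\>=-1$; in either case $\beta(0)=\phi(y,u)$, $\beta^\prime(0)=rv$ and $\frac{d}{dt}\nu(\beta(t))|_{t=0}=-r\lambda v$, so $\lambda$ is a principal curvature of multiplicity $n-1$. For the remaining direction, $\frac{\partial\nu}{\partial u}$ is orthogonal to the whole $\lambda$-eigenspace, hence a multiple of $\frac{\partial\phi}{\partial u}$; computing $\<\frac{\partial\nu}{\partial u},y\>=-(r\lambda)^\prime=-\mu r^\prime$ by Lemma \ref{mu} and comparing with $\<\frac{\partial\phi}{\partial u},y\>=r^\prime$ identifies the second principal curvature as $\mu=nh-(n-1)\lambda$. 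Lemma \ref{mu} then gives mean curvature $\frac{1}{n}[(n-1)\lambda+\mu]=h$.

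I expect the only genuine pitfall to be the signature bookkeeping: getting $\<B_2,B_2\>=-1$ and $\<\nu,\nu\>=1$ right is precisely what separates this case (target $\Hi_{k-1}^{n+1}$, spacelike normal) from the de Sitter constructions, and a single sign slip would break both $\<\phi,\phi\>=-1$ and the unit-normal check. Once the signs are fixed, every remaining computation is the routine verification carried out verbatim in the preceding theorems.
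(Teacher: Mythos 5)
Your proposal is correct and follows essentially the same route as the paper: the paper's own (abbreviated) proof consists precisely of recording the key identity $(r^\prime)^2+r^2\lambda^2=r^2+1$, the signature facts $\<y,y\>=1$, $\<B_2,B_2\>=-1$, $\<B_3,B_3\>=1$, $\<\nu,\nu\>=1$, and the same Gauss map $\nu=-r\lambda\,y-\frac{r^2\lambda}{\sqrt{r^2+1}}\,B_2+\frac{r^\prime}{\sqrt{r^2+1}}\,B_3$, deferring the rest to the template of the earlier theorems. Your write-up simply makes explicit the template verification (tangent space, eigenvalue computation via the curves $\beta$, and the identification of the second principal curvature through Lemma \ref{mu}) that the paper invokes by reference.
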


\begin{proof}
The proof follows the same arguments as before. In this case

$$ (r^\prime)^2+r^2 \lambda^2=r^2+1$$

and

$$\<y,y\>=1,\quad\<B_2,B_2\>=-1,\quad \<B_3,B_3\>=1 \com{and} \<\nu,\nu\>=1$$

where,

$$\nu=-r\lambda\,   y -\frac{r^2\, \lambda}{\sqrt{r^2+1}} \, B_2 + \frac{r^\prime}{\sqrt{r^2+1}} \, B_3$$

\end{proof}


\subsection{CMC hypersurfaces in $\R_k^{n+1}$}

Let us start this section by considering the following differential equations

\begin{eqnarray}\label{Euclidean m1}
(g^\prime(t))^2=q_3(g(t))\com{where} q_3(t)=c+t^2(h+t^{-n})^2
\end{eqnarray}

and
\begin{eqnarray}\label{Euclidean 1}
(g^\prime(t))^2=p_3(g(t))\com{where} p_3(t)=c-t^2(h+t^{-n})^2
\end{eqnarray}

where $c$ is a non zero constant.

\begin{thm}\label{example1 E1}
Let $c$ be a positive constant and let $g:(a_1,a_2)\to \R $ be a non constant positive solution of (\ref{Euclidean 1}). For any pair of integers $k$ and $n$  such that $n\ge 2$ and $0\le k\le n$, if we define

$$\lambda=h+g^{-n},\quad \mu=h-(n-1)g^{-n}\com{and} r=\frac{g}{\sqrt{c}}\quad R(t)=\int_0^t\, r(\tau)\lambda(\tau)\, d\tau$$

and

$$\tilde{\Sp}_{k}^{n-1}=\Sp_k^{n}\cap\tilde{\R}_{k}^{n}\com{where}\tilde{\R}_{k}^{n}=\{x\in\R_k^{n+1}:x_{n+1}=0\} $$

$$ B_2=(0,\dots,0,1) $$

then, the map $\phi:\tilde{\Sp}_{k}^{n-1}\times (a_1,a_2)\to \R_k^{n+1}$ given by

$$\phi(y,u)=r(u)\, y+ R(u) \, B_2$$

defines a hypersurface with constant mean curvature h.

\end{thm}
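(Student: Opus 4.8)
The plan is to follow the scheme of the proofs of Theorems~\ref{examples1 Sm1}--\ref{example5 H1}, adapted to the flat ambient $\R_k^{n+1}$. The structural changes are that the constraint $\<\phi,\phi\>=\pm1$ disappears, the vector $B_2$ is constant, and the twist function $\theta$ is replaced by the genuine height $R$ with $R^\prime=r\lambda$. First I would extract the working identity: since $r=g/\sqrt c$ and $(g^\prime)^2=p_3(g)=c-g^2\lambda^2$, dividing by $c$ gives
$$(r^\prime)^2+r^2\lambda^2=1.$$
Combined with Lemma~\ref{mu}, which gives $(r\lambda)^\prime=\mu r^\prime$ and $(n-1)\lambda+\mu=nh$, differentiating the identity yields $r^{\prime\prime}=-\mu r\lambda$; these three relations drive the entire computation.

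Next I would record $\<y,y\>=1$, $\<B_2,B_2\>=1$ and $\<y,B_2\>=0$ (the last two because $B_2=(0,\dots,0,1)$, the $(n+1)$-st direction is spacelike as $k\le n$, and $y\in\tilde{\R}_k^n$ has vanishing last coordinate). From $\phi=r\,y+R\,B_2$ I get $\frac{\partial\phi}{\partial u}=r^\prime\,y+r\lambda\,B_2$, whose squared norm is $(r^\prime)^2+r^2\lambda^2=1$. I would then propose the Gauss map
$$\nu=-r\lambda\,y+r^\prime\,B_2,$$
and verify $\<\nu,\nu\>=r^2\lambda^2+(r^\prime)^2=1$, $\<\nu,\frac{\partial\phi}{\partial u}\>=-r\lambda r^\prime+r^\prime r\lambda=0$, and $\<\nu,v\>=0$ for every $v\in\tilde{\R}_k^n$ with $\<v,y\>=0$ (automatic since $v\perp y$ and $v_{n+1}=0$). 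Thus $\nu$ is a unit normal and $T_{\phi(y,u)}$ is spanned by such $v$ together with $\frac{\partial\phi}{\partial u}$.

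To read off the principal curvatures I would move along $\beta(t)=\phi(\cos t\,y+\sin t\,v,u)$ when $\<v,v\>=1$ and $\beta(t)=\phi(\cosh t\,y+\sinh t\,v,u)$ when $\<v,v\>=-1$, for a unit $v\perp y$ in $\tilde{\R}_k^n$; in both cases $\beta^\prime(0)=rv$ and $\frac{d}{dt}\nu(\beta(t))\big|_{t=0}=-r\lambda v$, so $\lambda$ is a principal curvature of multiplicity $n-1$. In the remaining direction, using $(r\lambda)^\prime=\mu r^\prime$ and $r^{\prime\prime}=-\mu r\lambda$,
$$\frac{\partial\nu}{\partial u}=-(r\lambda)^\prime\,y+r^{\prime\prime}\,B_2=-\mu r^\prime\,y-\mu r\lambda\,B_2=-\mu\,\frac{\partial\phi}{\partial u},$$
so the second principal curvature is $\mu=nh-(n-1)\lambda$. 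Hence the mean curvature equals $\frac1n\big((n-1)\lambda+\mu\big)=h$ by Lemma~\ref{mu}.

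Since every step is a direct verification once the identity $(r^\prime)^2+r^2\lambda^2=1$ is in hand, I do not anticipate a genuine obstacle. The one point needing care is the case analysis on the causal type of $v$ (both $\<v,v\>=1$ and $\<v,v\>=-1$ occur when $k\ge1$), together with confirming that the flat-ambient bookkeeping---no normalization $\<\phi,\phi\>=\pm1$, constant $B_2$, and height $R$ in place of $\theta$---leaves the eigenvalues $\lambda$ and $\mu$ unchanged.
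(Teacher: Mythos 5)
Your proposal is correct and follows essentially the same route as the paper: the paper's proof of this theorem consists precisely of the identity $(r^\prime)^2+r^2\lambda^2=1$, the signs $\<y,y\>=\<B_2,B_2\>=\<\nu,\nu\>=1$, and the Gauss map $\nu=-r\lambda\,y+r^\prime\,B_2$, with all remaining steps deferred to the argument of Theorem~\ref{examples1 Sm1}, which is exactly what you reproduced. The only (harmless) deviation is that you obtain $\frac{\partial\nu}{\partial u}=-\mu\frac{\partial\phi}{\partial u}$ by the direct computation $r^{\prime\prime}=-\mu r\lambda$, whereas the paper argues that $\frac{\partial\nu}{\partial u}$ must be parallel to $\frac{\partial\phi}{\partial u}$ and compares their $y$-components via Lemma~\ref{mu}.
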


\begin{proof}
The proof follows the same arguments as before. In this case

$$ (r^\prime)^2+r^2 \lambda^2=1$$

and

$$\<y,y\>=1,\quad\<B_2,B_2\>=1, \com{and} \<\nu,\nu\>=1$$

where,

$$\nu=-r\lambda\,   y +r^\prime \, B_2 $$

\end{proof}





\begin{thm}\label{example2 E1}
Let $c$ be a positive constant and let $g:(a_1,a_2)\to \R $ be a non constant positive solution of (\ref{Euclidean 1}). For any pair of integers $k$ and $n$  such that $n\ge 2$ and $2\le k\le n$, if we define

$$\lambda=h+g^{-n},\quad \mu=h-(n-1)g^{-n}\com{and} r=\frac{g}{\sqrt{c}}\quad R(t)=\int_0^t\, r(\tau)\lambda(\tau)\, d\tau$$

and

$$\tilde{\Hi}_{k-2}^{n-1}=\Hi_{k-1}^{n}\cap\tilde{\R}_{k-1}^{n}\com{where}\tilde{\R}_{k-1}^{n}=\{x\in\R_k^{n+1}:x_{1}=0\} $$

$$ B_2=(1,0,\dots,0) $$

then, the map $\phi:\tilde{\Hi}_{k-2}^{n-1}\times (a_1,a_2)\to \R_k^{n+1}$ given by

$$\phi(y,u)=r(u)\, y+ R(u) \, B_2$$

defines a hypersurface with constant mean curvature h.

\end{thm}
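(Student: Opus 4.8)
The plan is to follow verbatim the template of Theorem \ref{example1 E1}, the only genuine changes being the causal characters forced by placing both the fiber point $y$ and the axis vector $B_2$ along time-like coordinate directions. First I would record the two algebraic identities that drive every one of these computations. Dividing the defining ODE (\ref{Euclidean 1}) by $c$ and using $r=g/\sqrt{c}$ gives $(r^\prime)^2+r^2\lambda^2=1$, while Lemma \ref{mu} supplies $(r\lambda)^\prime=\mu r^\prime$. Differentiating the first identity and cancelling the factor $r^\prime$ (legitimate since $g$, hence $r$, is non-constant) yields $r^{\prime\prime}=-\mu r\lambda$. These three relations are all the analysis the proof needs; everything else is linear algebra in $\R_k^{n+1}$, which, being flat, requires no constraint of the form $\langle\phi,\phi\rangle=\pm1$.

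Next I would set up the inner products. Here $\langle y,y\rangle=-1$ because $y\in\Hi_{k-1}^n$, and $\langle B_2,B_2\rangle=-1$ because $B_2=(1,0,\dots,0)$ points along the first, time-like axis (this is exactly why the hypothesis $k\ge 2$ is needed, so that both a time-like axis and a non-empty hyperbolic fiber are available); moreover $\langle y,B_2\rangle=0$ since every $y\in\tilde{\R}_{k-1}^n$ has vanishing first coordinate. Since $R^\prime=r\lambda$, one computes $\frac{\partial\phi}{\partial u}=r^\prime\,y+r\lambda\,B_2$ and, using $(r^\prime)^2+r^2\lambda^2=1$, checks $\langle\frac{\partial\phi}{\partial u},\frac{\partial\phi}{\partial u}\rangle=-1$, so the $u$-line is time-like and $\phi$ is an immersion. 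The tangent space at $\phi(y,u)$ is spanned by $\frac{\partial\phi}{\partial u}$ together with the vectors $r\,v$ for $v\in\tilde{\R}_{k-1}^n$ with $\langle v,y\rangle=0$.

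I would then exhibit the Gauss map $\nu=-r\lambda\,y+r^\prime\,B_2$, precisely as in Theorem \ref{example1 E1}. A direct check gives $\langle\nu,\nu\rangle=-1$, $\langle\nu,\frac{\partial\phi}{\partial u}\rangle=0$, and $\langle\nu,v\rangle=0$ on the fiber directions (the last because such $v$ have vanishing first coordinate, so $\langle v,B_2\rangle=0$). To read off the principal curvatures in the fiber I would, for a fixed unit $v\perp y$, move along the appropriate geodesic of $\tilde{\Hi}_{k-2}^{n-1}$, namely $\cosh(t)\,y+\sinh(t)\,v$ when $\langle v,v\rangle=1$ and $\cos(t)\,y+\sin(t)\,v$ when $\langle v,v\rangle=-1$; in either case the curve $\beta(t)=\phi(\cosh(t)y+\sinh(t)v,u)$ (resp. its trigonometric analogue) satisfies $\beta^\prime(0)=rv$ and $\frac{d}{dt}\nu(\beta(t))|_{t=0}=-r\lambda\,v$, so $\lambda$ is a principal curvature of multiplicity $n-1$.

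Finally, for the remaining direction I would differentiate $\nu$ in $u$: using $(r\lambda)^\prime=\mu r^\prime$ and $r^{\prime\prime}=-\mu r\lambda$ one obtains $\frac{\partial\nu}{\partial u}=-\mu r^\prime\,y-\mu r\lambda\,B_2=-\mu\,\frac{\partial\phi}{\partial u}$, so the second principal curvature is $\mu=nh-(n-1)\lambda$. The mean curvature is then $\frac1n\big((n-1)\lambda+\mu\big)=h$ by Lemma \ref{mu}. The only point that requires care, the ``main obstacle'' such as it is, is the bookkeeping of signs: because $y$ and $B_2$ are now both time-like, several inner products that equalled $+1$ in Theorem \ref{example1 E1} become $-1$, and one must confirm that these sign changes cancel in pairs, both in the verification $\langle\frac{\partial\phi}{\partial u},\frac{\partial\phi}{\partial u}\rangle=-1$ and in the ratio $\langle\frac{\partial\nu}{\partial u},y\rangle/\langle\frac{\partial\phi}{\partial u},y\rangle=-\mu$, so that the eigenvalue computations, and hence the value $h$ of the mean curvature, are unchanged.
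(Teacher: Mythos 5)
Your proposal is correct and takes essentially the same route as the paper: the paper's own proof of this theorem just says it ``follows the same arguments as before,'' recording precisely the data you verify, namely the identity $(r')^2+r^2\lambda^2=1$, the signs $\langle y,y\rangle=\langle B_2,B_2\rangle=\langle\nu,\nu\rangle=-1$, and the Gauss map $\nu=-r\lambda\,y+r'\,B_2$. Your only (harmless) deviation is obtaining $\frac{\partial\nu}{\partial u}=-\mu\,\frac{\partial\phi}{\partial u}$ by direct differentiation via $r''=-\mu r\lambda$, where the paper's template instead notes that $\frac{\partial\phi}{\partial u}$ is a principal direction and reads off the eigenvalue from $\langle\frac{\partial\nu}{\partial u},y\rangle=-\mu r'$ versus $\langle\frac{\partial\phi}{\partial u},y\rangle=r'$; both give the same conclusion.
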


\begin{proof}
The proof follows the same arguments as before. In this case

$$ (r^\prime)^2+r^2 \lambda^2=1$$

and

$$\<y,y\>=-1,\quad\<B_2,B_2\>=-1 \com{and} \<\nu,\nu\>=-1$$

where,

$$\nu=-r\lambda\,   y +r^\prime \, B_2 $$

\end{proof}


\begin{thm}\label{example3 E1}
Let $c$ be a positive constant and let $g:(a_1,a_2)\to \R $ be a non constant positive solution of (\ref{Euclidean m1}). For any pair of integers $k$ and $n$  such that $n\ge 2$ and $1\le k\le n$, if we define

$$\lambda=h+g^{-n},\quad \mu=h-(n-1)g^{-n}\com{and} r=\frac{g}{\sqrt{c}}\quad R(t)=\int_0^t\, r(\tau)\lambda(\tau)\, d\tau$$

and

$$\tilde{\Sp}_{k-1}^{n-1}=\Sp_k^{n}\cap\tilde{\R}_{k-1}^{n}\com{where}\tilde{\R}_{k-1}^{n}=\{x\in\R_k^{n+1}:x_{1}=0\} $$

$$ B_2=(1,0,\dots,0) $$

then, the map $\phi:\tilde{\Sp}_{k-1}^{n-1}\times (a_1,a_2)\to \R_k^{n+1}$ given by

$$\phi(y,u)=r(u)\, y+ R(u) \, B_2$$

defines a hypersurface with constant mean curvature h.

\end{thm}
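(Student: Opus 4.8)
The plan follows the template established in all the preceding theorems of this subsection, since the statement of Theorem \ref{example3 E1} differs from Theorems \ref{example1 E1} and \ref{example2 E1} only in the defining differential equation (\ref{Euclidean m1}) instead of (\ref{Euclidean 1}), the signature index $k$ (requiring $1\le k\le n$), and the causal character of the fibre $\tilde{\Sp}_{k-1}^{n-1}$. First I would record the algebraic identity satisfied by $r$ on the support of $g$. Differentiating $r=g/\sqrt{c}$ and using that $g$ solves (\ref{Euclidean m1}), namely $(g')^2=c+g^2(h+g^{-n})^2$, together with $\lambda=h+g^{-n}$, one computes directly that
\begin{equation}
(r')^2-r^2\lambda^2=1.
\end{equation}
This is the sign pattern appropriate to the present case, and it replaces the relation $(r')^2+r^2\lambda^2=1$ of the two previous Euclidean theorems; the change of sign is exactly what forces the fibre to carry signature index $k-1$ rather than $k$.

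Next I would verify the inner-product data for the building blocks. With $y\in\tilde{\Sp}_{k-1}^{n-1}\subset\Sp_k^n$ one has $\<y,y\>=1$, while the fixed vector $B_2=(1,0,\dots,0)$ lying in the first (timelike) coordinate gives $\<B_2,B_2\>=-1$, and $B_2$ is orthogonal to $\tilde{\R}_{k-1}^n=\{x_1=0\}$ which contains $y$. I would then check that the immersion $\phi(y,u)=r(u)\,y+R(u)\,B_2$ lands in $\R_k^{n+1}$ with $\<\phi,\phi\>$ of the correct constant value, and compute the tangent vector $\partial\phi/\partial u=r'\,y+R'\,B_2=r'\,y+r\lambda\,B_2$ using $R'=r\lambda$. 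A short computation, invoking the identity above, shows $\<\partial\phi/\partial u,\partial\phi/\partial u\>=(r')^2-r^2\lambda^2=1$, confirming that the $u$-direction is unit and spacelike. The Gauss map should be taken as $\nu=-r\lambda\,y+r'\,B_2$, and I would verify $\<\nu,\nu\>=-(r\lambda)^2\cdot(-1)$ type bookkeeping gives the stated $\<\nu,\nu\>=1$, together with $\<\nu,\partial\phi/\partial u\>=0$ and $\<\nu,v\>=0$ for every $v$ in the fibre direction perpendicular to $y$.

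The curvature computation is then identical in structure to Theorem \ref{examples1 Sm1}. For a unit vector $v$ tangent to $\tilde{\Sp}_{k-1}^{n-1}$ at $y$ I would write the curve $\beta(t)=\phi(\cos(t)y+\sin(t)v,u)$ (or its hyperbolic analogue when $\<v,v\>=-1$), note $\beta(0)=\phi(y,u)$ and $\beta'(0)=rv$, and differentiate $\nu(\beta(t))$ to obtain $d\nu(rv)=-r\lambda\,v$, so that $\lambda$ is a principal curvature of multiplicity $n-1$. For the remaining direction I would use Lemma \ref{mu}, whose identity $(r\lambda)'=\mu r'$ is precisely what makes the $u$-curve a line of curvature: computing $\<\partial\nu/\partial u,y\>=-\lambda'r-\lambda r'=-\mu r'$ and comparing with $\<\partial\phi/\partial u,y\>=r'$ forces $\partial\nu/\partial u=-\mu\,\partial\phi/\partial u$, so the second principal curvature is $\mu=nh-(n-1)\lambda$. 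The $n$ principal curvatures are therefore $\lambda$ (with multiplicity $n-1$) and $nh-(n-1)\lambda$, whose average is exactly $h$.

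The only genuine subtlety, and the step I would treat most carefully, is the sign bookkeeping in the semi-Riemannian inner products: because $B_2$ now occupies a timelike slot and the fibre has index $k-1$, each $\<\,\cdot\,,\cdot\,\>$ acquires sign changes relative to the Riemannian prototype, and one must confirm that these conspire to yield $\<\phi,\phi\>=1$, $\<\partial\phi/\partial u,\partial\phi/\partial u\>=1$, and $\<\nu,\nu\>=1$ with the identity $(r')^2-r^2\lambda^2=1$ rather than its $+$ counterpart. Once these signs are checked, the differentiation of $\nu$ along $\beta$ and along $\partial\phi/\partial u$ proceeds verbatim as in the earlier proofs, and no new analytic input beyond Lemma \ref{mu} is required.
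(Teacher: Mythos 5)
Your outline follows the paper's template, and your identity $(r')^2-r^2\lambda^2=1$ and the inner products $\langle y,y\rangle=1$, $\langle B_2,B_2\rangle=-1$ are correct. But the step you yourself single out as the crucial one --- the sign bookkeeping for the normal --- is wrong on both counts. First, the vector you propose, $\nu=-r\lambda\,y+r'\,B_2$, is not normal to the immersion: since $\partial\phi/\partial u=r'\,y+r\lambda\,B_2$ and $B_2$ is timelike,
$$\langle \nu,\tfrac{\partial\phi}{\partial u}\rangle=(-r\lambda)(r')\langle y,y\rangle+(r')(r\lambda)\langle B_2,B_2\rangle=-r\lambda r'-r\lambda r'=-2\,r\lambda r',$$
which is not identically zero (the equation $(r')^2=1+r^2\lambda^2\ge 1$ forces $r'\neq 0$, and $\lambda\equiv 0$ would force $g$ constant). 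You carried over the normal $-r\lambda\,y+r'\,B_2$ from Theorems \ref{example1 E1} and \ref{example2 E1}, where $\langle y,y\rangle$ and $\langle B_2,B_2\rangle$ have the \emph{same} sign; in the present mixed-signature case the correct choice, which is the paper's, is $\nu=-r\lambda\,y-r'\,B_2$. Second, your claim $\langle\nu,\nu\rangle=+1$ is impossible for either sign of the $B_2$ component: $\langle\nu,\nu\rangle=r^2\lambda^2\langle y,y\rangle+(r')^2\langle B_2,B_2\rangle=r^2\lambda^2-(r')^2=-1$. This is also forced geometrically: the tangent space (fibre of index $k-1$ plus the spacelike direction $\partial\phi/\partial u$) has index $k-1$ inside $\R_k^{n+1}$, so the unit normal must be timelike --- which is precisely why the remark following the theorem can call the $k=1$ examples space like rather than Lorentzian. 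With your $\nu$, the identities $\langle\partial\nu/\partial u,y\rangle=-\mu r'$ and $\partial\nu/\partial u=-\mu\,\partial\phi/\partial u$ fail, so the computation of the second principal curvature collapses as written.

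Once the normal is corrected to $\nu=-r\lambda\,y-r'\,B_2$ (with $\langle\nu,\nu\rangle=-1$), the rest of your argument goes through exactly as you describe: the $\beta$-curves give $d\nu(rv)=-r\lambda\,v$, Lemma \ref{mu} together with differentiation of $(r')^2=1+r^2\lambda^2$ gives $r''=\mu r\lambda$ and hence $\partial\nu/\partial u=-\mu\,\partial\phi/\partial u$, and the mean curvature is $\frac{1}{n}\bigl((n-1)\lambda+nh-(n-1)\lambda\bigr)=h$. One smaller point: there is nothing to check about $\langle\phi,\phi\rangle$ having a ``correct constant value'' --- unlike the $\Sp_k^{n+1}$ and $\Hi_{k-1}^{n+1}$ cases, the target here is the flat space $\R_k^{n+1}$, not a quadric, and indeed $\langle\phi,\phi\rangle=r^2-R^2$ is not constant. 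The only verifications needed are that $\partial\phi/\partial u$ is unit, that $\nu$ is a unit normal field, and the two principal-curvature computations.
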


\begin{proof}
The proof follows the same arguments as before. In this case

$$ (r^\prime)^2-r^2 \lambda^2=1$$

and

$$\<y,y\>=1,\quad\<B_2,B_2\>=-1 \com{and} \<\nu,\nu\>=-1$$

where,

$$\nu=-r\lambda\,   y -r^\prime \, B_2 $$

\end{proof}

\begin{rem}

For $k=1$, the examples above give space like immersion in the Minkowski space. None of these examples is complete.

\end{rem}


\begin{thm}\label{example4 E1}
Let $c$ be a negative constant and let $g:(a_1,a_2)\to \R $ be a non constant positive solution of (\ref{Euclidean m1}). For any pair of integers $k$ and $n$  such that $n\ge 2$ and $1\le k\le n$, if we define

$$\lambda=h+g^{-n},\quad \mu=h-(n-1)g^{-n}\com{and} r=\frac{g}{\sqrt{-c}}\quad R(t)=\int_0^t\, r(\tau)\lambda(\tau)\, d\tau$$

and

$$\tilde{\Hi}_{k-1}^{n-1}=\Hi_k^{n}\cap\tilde{\R}_{k}^{n-1}\com{where}\tilde{\R}_{k}^{n-1}=\{x\in\R_k^{n+1}:x_{n+1}=0\} $$

$$ B_2=(0,0,\dots,1) $$

then, the map $\phi:\tilde{\Hi}_{k}^{n-1}\times (a_1,a_2)\to \R_k^{n+1}$ given by

$$\phi(y,u)=r(u)\, y+ R(u) \, B_2$$

defines a hypersurface with constant mean curvature h.

\end{thm}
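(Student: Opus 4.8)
The plan is to run the same template used for Theorems \ref{example1 E1}--\ref{example3 E1}: the target $\R_k^{n+1}$ is flat, so there is no constraint of the form $\<\phi,\phi\>=\text{const}$ to impose, and the whole verification reduces to recording three inner products, exhibiting a unit normal, and reading off the two principal curvatures. First I would convert the hypothesis that $g$ solves (\ref{Euclidean m1}) with $c<0$ into a single scalar identity for $r=g/\sqrt{-c}$. Since $g^2=-c\,r^2$ and $\lambda=h+g^{-n}$, the equation $(g')^2=c+g^2\lambda^2$ becomes, after dividing by $-c>0$,
\[
(r')^2-r^2\lambda^2=-1 .
\]
This is the sign-reversed analogue of the identity $(r')^2-r^2\lambda^2=1$ appearing in Theorem \ref{example3 E1}, and it is precisely this reversal that forces the leaf to be hyperbolic and the Gauss map to be timelike. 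I would pair it with the flat products $\<y,y\>=-1$, $\<B_2,B_2\>=1$, and $\<y,B_2\>=0$, the last holding because $y$ lies in the hyperplane $x_{n+1}=0$ while $B_2$ is the omitted direction.

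Next I would compute, using $R'=r\lambda$, that $\partial\phi/\partial u=r'\,y+r\lambda\,B_2$ and check from the identity above that $\<\partial\phi/\partial u,\partial\phi/\partial u\>=-(r')^2+r^2\lambda^2=1$, so this is a unit spacelike field; the remaining tangent vectors are the $r\,v$ obtained by moving $y$ inside the leaf, with $v\in\tilde{\R}_k^{n-1}$ and $\<v,y\>=0$. I would then propose the normal
\[
\nu=-r\lambda\,y-r'\,B_2 ,
\]
verify $\<\nu,\nu\>=(r')^2-r^2\lambda^2=-1$, and confirm orthogonality to the tangent space: orthogonality to every leaf vector $r\,v$ is automatic because both $y$ and $B_2$ pair to zero with such $v$, while $\<\nu,\partial\phi/\partial u\>=r\lambda\,r'-r'\,r\lambda=0$. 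Thus $\nu$ is a timelike Gauss map for $\phi$.

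To read off the principal curvatures I would use the two standard families of curves. For a unit leaf vector $v$ I take $\beta(t)=\phi(\cosh t\,y+\sinh t\,v,u)$ when $\<v,v\>=1$ (and the $\cos/\sin$ variant when $\<v,v\>=-1$), so that $\beta'(0)=r\,v$; since $\nu$ depends on $y$ only through the factor $-r\lambda\,y$, differentiating gives $d\nu(r\,v)=-r\lambda\,v$, whence $\lambda$ is a principal curvature of multiplicity $n-1$. For the last direction, $\partial\nu/\partial u=-(r\lambda)'\,y-r''\,B_2$ has no leaf component, so $\partial\phi/\partial u$ is principal; here Lemma \ref{mu} enters twice. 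Directly it gives $(r\lambda)'=\mu\,r'$, and differentiating $(r')^2-r^2\lambda^2=-1$ yields $r'r''=r\lambda(r\lambda)'=\mu\,r\lambda\,r'$, hence $r''=\mu\,r\lambda$. Together these collapse $\partial\nu/\partial u$ to $-\mu\,\partial\phi/\partial u$, so the remaining principal curvature is $\mu=nh-(n-1)\lambda$, and the mean curvature is $\tfrac1n[(n-1)\lambda+\mu]=h$ by the first conclusion of Lemma \ref{mu}.

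I do not anticipate a genuine obstacle, since every step is an explicit computation; the only delicate matter is the sign bookkeeping created by $c<0$ and by the timelike normal $\<\nu,\nu\>=-1$. The step most apt to conceal an error is the second-order relation $r''=\mu\,r\lambda$: it requires differentiating the scalar identity, cancelling the common factor $r'$ (legitimate because $g$, and hence $r$, is nonconstant on $(a_1,a_2)$), and recognizing the outcome as Lemma \ref{mu} rather than an independent fact.
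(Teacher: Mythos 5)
Your proposal is correct and follows essentially the same route as the paper: the paper's proof simply invokes the template of the earlier theorems and records exactly the data you derive, namely the first integral $(r^\prime)^2-r^2\lambda^2=-1$, the products $\<y,y\>=-1$, $\<B_2,B_2\>=1$, and the timelike Gauss map $\nu=-r\lambda\, y-r^\prime\, B_2$ with $\<\nu,\nu\>=-1$. Your only (harmless) variation is to pin down the proportionality $\partial\nu/\partial u=-\mu\,\partial\phi/\partial u$ via the second-order relation $r^{\prime\prime}=\mu\, r\lambda$ instead of comparing $y$-components as the template does; both rest on Lemma \ref{mu}.
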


\begin{proof}
The proof follows the same arguments as before. In this case

$$ (r^\prime)^2-r^2 \lambda^2=-1$$

and

$$\<y,y\>=-1,\quad\<B_2,B_2\>=1 \com{and} \<\nu,\nu\>=-1$$

where,

$$\nu=-r\lambda\,   y -r^\prime \, B_2 $$

\end{proof}

\begin{rem}

For $k=1$, the examples above are  space like cmc immersions in the Minkowski space. In the next section we will show that they are complete.

\end{rem}


\section{Closed and complete examples.}

In this section we will analyze the completeness of the space-like examples and we will study if the semi riemannian  examples can be extended to closed hypersurfaces. 

\subsection{Closed and embedded examples with CMC in $\Sp_k^{n+1}$ }

\begin{thm}
Let $n\ge3$ be an integer. For any $h\in [-1,-\frac{2\sqrt{n-1}}{n})$ there exist solutions of the equation (\ref{Sphere m1}) such that the immersions given in Theorem (\ref{examples1 Sm1}) define  closed immersions. For $k=1$, these immersions define complete space like immersions in the de Sitter space.
\end{thm}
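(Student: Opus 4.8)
The plan is to produce, for each admissible $h$, a \emph{periodic} (hence closed) profile $g$ solving (\ref{Sphere m1}) for a well‑chosen $c>0$, and then to read off completeness and the spacelike character directly from Theorem \ref{examples1 Sm1}. Everything reduces to the one–variable analysis of $q_1(t)=c-t^2+t^2(h+t^{-n})^2$ on $(\sqrt c,\infty)$. The key simplification I would exploit first is the factorization $q_1(t)=c+t^2(\lambda^2-1)$, where $\lambda=h+t^{-n}$ is exactly the principal curvature of multiplicity $n-1$. Thus the $t$–dependent part of $q_1$ is independent of $c$, and varying $c$ merely translates the graph vertically; this is what lets me tune $c$ to create the oscillation interval.

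Next I would locate the critical points of $q_1$. Differentiating and substituting $t^{-n}=\lambda-h$ turns $q_1'(t)=0$ into the quadratic $(n-1)\lambda^2-nh\lambda+1=0$. Its discriminant is $n^2h^2-4(n-1)$, which is positive exactly when $|h|>\frac{2\sqrt{n-1}}{n}$ — this is precisely where the lower endpoint $-\frac{2\sqrt{n-1}}{n}$ of the interval is forced, and strict inequality is what separates the two roots. For $h<0$ the product $\tfrac{1}{n-1}>0$ and sum $\tfrac{nh}{n-1}<0$ of the roots make both of them negative, and evaluating the quadratic at $\lambda=h$ (value $1-h^2\ge0$) together with $n\ge3$ places $h$ to the left of the vertex, so both roots lie in $(h,0)\subset(-1,0)$ and yield genuine critical points $t_1<t_2$ with $\lambda_i=h+t_i^{-n}\in(-1,0)$. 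Since $q_1\to+\infty$ as $t\to0^+$ and, because $h^2\le1$, $q_1\to-\infty$ as $t\to\infty$, the smaller critical point $t_1$ is a local minimum and $t_2$ a local maximum. (The upper endpoint $h=-1$ is the boundary case where $q_1\to c>0$ and one root degenerates to $t=\infty$; there I would use a half–line, linearly growing profile giving a complete non‑oscillating example, in the spirit of Lemma \ref{solution lemma 1}.)

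Now I would arrange the bump. Because $\lambda_i^2<1$ one has $q_1(t_i)-c=t_i^2(\lambda_i^2-1)<0$ at both critical points, and since the local maximum value exceeds the local minimum value the interval $\bigl(t_2^2(1-\lambda_2^2),\,t_1^2(1-\lambda_1^2)\bigr)$ is nonempty and lies in $(0,\infty)$. Choosing $c$ there makes $q_1(t_2)>0$ and $q_1(t_1)<0$, so $q_1$ has two simple zeros $a_1<a_2$ straddling $t_2$ with $q_1>0$ on $(a_1,a_2)$, $q_1'(a_1)>0$ and $q_1'(a_2)<0$. Moreover, at the preceding zero $a_0$ the relation $c=a_0^2\bigl(1-\lambda(a_0)^2\bigr)\le a_0^2$ forces $\sqrt c\le a_0<a_1$, so the hypothesis $g>\sqrt c$ of Theorem \ref{examples1 Sm1} holds automatically. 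Lemma \ref{solutions lemma 2} then produces a nonconstant $T$‑periodic solution $g$ oscillating in $[a_1,a_2]$, and Theorem \ref{examples1 Sm1} turns it into a CMC‑$h$ immersion with closed profile.

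Finally, for $k=1$ the base $\tilde{\Sp}_{0}^{n-1}=\Sp_1^{n+1}\cap\{x_1=x_2=0\}$ is the round sphere $\Sp^{n-1}$, so the immersion is defined on $\Sp^{n-1}\times\R$ (the periodic $g$ extends to all of $\R$). Its induced metric is the warped product $du^2+r(u)^2\,g_{\Sp^{n-1}}$ with $r=g/\sqrt c$ periodic and bounded away from $0$ and $\infty$, hence complete; and since the Gauss map $\nu$ of Theorem \ref{examples1 Sm1} satisfies $\<\nu,\nu\>=-1$, the hypersurface is spacelike. I expect the main obstacle to be the second and third steps: correctly reducing the existence of a bounded oscillation to the discriminant condition and verifying the sign and positioning inequalities — in particular the placement of the quadratic's roots relative to $h$ and the automatic bound $\sqrt c<a_1$ — since this is exactly the analysis that forces the endpoints $-1$ and $-\frac{2\sqrt{n-1}}{n}$.
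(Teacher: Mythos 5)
Your proposal is correct and reaches the same two destinations as the paper's proof --- periodic solutions of (\ref{Sphere m1}) staying above $\sqrt{c}$ for $h\in(-1,-\frac{2\sqrt{n-1}}{n})$ via Lemma \ref{solutions lemma 2}, plus a separate non-oscillating complete solution at the endpoint $h=-1$ --- but your technical route is genuinely different and in places cleaner. The paper works with explicit closed-form expressions for the two critical points $v_0<v_1$ of $q_1$ and for the critical level $c_1$ at which $q_1(v_1)=0$; it then takes $c\in(c_1,c_1+\epsilon)$, uses continuity of the roots $t_1(c),t_2(c)\to v_1$, and establishes the hypothesis $g>\sqrt{c}$ of Theorem \ref{examples1 Sm1} by an explicit algebraic verification that $(v_1/\sqrt{c_1})^2>1$. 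You instead (i) convert $q_1'(t)=0$ into the quadratic $(n-1)\lambda^2-nh\lambda+1=0$, whose discriminant $n^2h^2-4(n-1)$ explains the endpoint $-\frac{2\sqrt{n-1}}{n}$ and whose root placement in $(-1,0)$ replaces the paper's formulas; (ii) exploit the fact that $c$ only translates $q_1$ vertically, so the critical points are $c$-independent and the admissible $c$'s form the explicit interval $(t_2^2(1-\lambda_2^2),\,t_1^2(1-\lambda_1^2))$ rather than a perturbative neighborhood of $c_1$; and (iii) get the crucial bound from the identity $c=a^2(1-\lambda(a)^2)\le a^2$ valid at every root $a$ of $q_1$, which yields $\sqrt{c}\le a_0<a_1\le g$ with strictness for free. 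Point (iii) in particular replaces the least transparent computation in the paper's argument, while the paper's computations have the merit of producing the concrete threshold values $c_1$ and $c_0$.

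The one place where you are materially thinner than the paper is the endpoint $h=-1$, which the statement includes and to which the paper devotes half of its proof: there $q_1(t)=c-2t^{2-n}+t^{2-2n}$ has a single positive critical point $v_0=((n-1)/(n-2))^{1/n}$, the paper takes $c$ slightly below the level $c_0=n(n-2)^{(n-2)/n}(n-1)^{(2-2n)/n}$ so that two roots exist, checks $v_0/\sqrt{c_0}>1$ to conclude $t_2(c)>\sqrt{c}$, and then extends the solution evenly to all of $\R$ with linear growth --- exactly the construction your parenthetical names. Your sketch, however, omits two verifications: that $c$ must lie in $(0,c_0)$ for a root $t_2(c)$ to exist at all, and that $g>\sqrt{c}$ still holds in this case. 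Note that your own identity closes the second point immediately: $c=t_2^2(1-\lambda(t_2)^2)<t_2^2$ strictly, since $\lambda(t_2)=-1+t_2^{-n}\neq0$ because $t_2>v_0>1$. Relatedly, your passing claim that $h^2\le1$ forces $q_1\to-\infty$ at infinity should read $h^2<1$; at $h=-1$ one has $q_1\to c>0$, which is precisely why that case must be split off.
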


\begin{proof}

 We will show the theorem by showing that there exist positive values of $c$ such that the equation  (\ref{Sphere m1}) has periodic solutions $g(t)$ such that $g(t)>\sqrt{c}$. A direct verification shows that for values of $h$ in $h\in (-1,-\frac{2\sqrt{n-2}}{n})$, the polynomial $q_1$ has exactly two positive critical points $v_0$ and $v_1$ with $0<v_0<v_1$. The values for $v_0$ and $v_1$ are

$$ v_0=2^{-\frac{1}{n}} (\frac{h (n-2) + \sqrt{4 - 4 n + h^2 n^2}}{h^2-1})^\frac{1}{n}\com{and}
v_1 = 2^{-\frac{1}{n}} (\frac{h (n-2) - \sqrt{4 - 4 n + h^2 n^2}}{h^2-1})^\frac{1}{n} $$

It follows that the function $q_1$ is decreasing from $0$ to $v_0$, increasing from $v_0$ to $v_1$ and decreasing from $v_1$ to $\infty$. A direct computation shows that if $q_1(v_1)=0$ then

$$ c=c_1= (2 - 2 h^2)^\frac{n-2}{n}\,   n\,  {\big(}-h (n-2) + \sqrt{4 - 4n + h^2 n^2}{\big)}^{\frac{2-2n}{n}} (h^2 n-2 - h \sqrt{4 - 4n + h^2 n^2}) $$

It is not difficult to show that $c_1$ is positive. Taking in consideration the intervals where the function $q_1$ is increasing and decreasing, we have that, for every $c>c_1$ close to $c_1$ the polynomial $q_1$ has exactly $3$ positive roots $t_i(c)$ such that $t_0(c) <t_1(c)<v_1<t_2(c)$, and moreover,

$$q_1(t)>0\com{for} t_1(c)<t<t_2(c)\com{and} \lim_{t\to c_1^{+}}t_1(c)= \lim_{t\to c_1^{+}}t_2(c)=v_1 $$

Notice that if the conditions above hold true, then, any solution $g$  of the differential equation (\ref{Sphere m1}) is periodic and satisfy that

$$t_1(c)<g(t)<t_2(c)\com{for all} t$$

The expression $(\frac{v_1}{\sqrt{c_1}})^2$ reduces to

$$\frac{2 + (-2 + h^2) n + h \sqrt{4 + n (-4 + h^2 n)}}{2 (-1 + h^2) n}$$

and a direct computation shows that the expression above is greater than $1$ for values of $h\in(-1,-\frac{2\sqrt{n-2}}{n})$. Since $\frac{v_1}{\sqrt{c_1}}>1$, there exists a positive $\epsilon$ such that  for all $c\in(c_1,c_1+\epsilon)$ we have that $\frac{t_1(c)}{\sqrt{c}}>1$. Therefore, for all these $c$ we will have, using Lemma (\ref{solutions lemma 2}), that the solutions of the differential equation (\ref{Sphere m1}) are periodic and satisfy that $g(t)>\sqrt{c}$ and then, these functions will define closed immersions from $\Sp_{k-1}^{n-1}\times \R$ in $\Sp_k^{n+1}$. When $k=1$, the completeness of the riemannian induced metric on $\Sp^{n-1}\times \R$ easily follows using the fact that $g$ is positive and periodic. When $h=-1$, a direct verification shows that, under this additional condition, the polynomial $q_1$ satisfies that

 $$ \lim_{t\to\infty}q_1(t)=c \com{and} \lim_{t\to 0^+}q_1(t)=\infty$$

and it only has one critical point $v_0$ given by,

$$v_0={\big (}\frac{n-1}{n-2}{\big )}^{\frac{1}{n}}$$

Since the value of $q_1$ at this critical point is given by

$$q_1(v_0)=c-c_0\com{where}c_0= n\, (n-2)^{\frac{n-2}{n}}\,  ( n-1)^{\frac{2-2n}{n}}$$

Therefore, if $c\in (0,c_0)$, the polynomial $q_1$ will have exactly two positive roots $t_1(c)$ and $t_2(c)$ with  $t_1(c)<v_0<t_2(c)$. Since,

$$\frac{v_0}{\sqrt{c_0}}=\frac{n-1}{\sqrt{n^2-2n}}>1$$

 and $\lim_{c\to c_0}t_2(c)=v_0$ we get that for values of $c$ close to $c_0$, $t_2(c)$ is greater than $\sqrt{c}$ and therefore $g(t)>\sqrt{c}$ for all $t$. In this case, using a small variation of Lemma (\ref{solution lemma 1}), we get that the function $g(t)$ can be extended as an even function to all real numbers. Notice that in this case, the function $F$ defined in the proof Lemma (\ref{solution lemma 1}) grows linearly instead of  logarithmicaly. Since  $g(t)>\sqrt{c}$, then, we obtain closed examples with $h=-1$.

\end{proof}

\begin{rem}
In the previous examples, along the geodesic given by the principal direction $\mu$, as the arc length parameter $u$ of this geodesic goes to infinity,  the function $\lambda$ remains bounded when $h\ne-1$ and decays as  $u^{-n}$ when $h=-1$.
\end{rem}

\begin{thm}
For any $h\in (-\infty,-1)\cup(1,\infty)$, there exist solutions of the equation (\ref{Sphere m1}) such that the immersions given in Theorem (\ref{examples2 Sm1}) define  closed immersions. For $k=1$, these immersions define complete space like immersions in the de Sitter space. When $h\in (1,\infty)$ the examples are embedded.
\end{thm}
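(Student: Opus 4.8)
The plan is to produce, for each such $h$, a negative constant $c$ and a positive solution $g$ of (\ref{Sphere m1}) that is defined on all of $\R$, so that Theorem (\ref{examples2 Sm1}) yields an immersion of $\tilde{\Hi}_{k-1}^{n-1}\times\R$ with no boundary. The first step is to analyze the profile polynomial. Writing $q_1(t)=c+(h^2-1)t^2+2h\,t^{2-n}+t^{2-2n}$ and setting $s=t^n$, the equation $q_1^\prime(t)=0$ reduces to the quadratic $(h^2-1)s^2+h(2-n)s+(1-n)=0$. Since $h^2>1$, the product of its roots equals $(1-n)/(h^2-1)<0$, so exactly one root is positive; hence $q_1$ has a unique positive critical point $v$. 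Because $q_1(t)\to+\infty$ both as $t\to0^+$ (where $t^{2-2n}$ dominates) and as $t\to\infty$ (where $(h^2-1)t^2$ dominates), this critical point is the global minimum of $q_1$ on $(0,\infty)$, with $q_1$ decreasing on $(0,v)$ and increasing on $(v,\infty)$.

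Next I would fix $c$ negative enough that $q_1(v)<0$; this is possible because the location of $v$ does not depend on $c$, while $q_1(v)=c+\tilde q_1(v)$ with $\tilde q_1(v)>0$ the fixed, positive minimum of the $c$-independent part $\tilde q_1=q_1-c$. For such $c$ the polynomial $q_1$ has exactly two positive roots $t_1<v<t_2$, with $q_1<0$ on $(t_1,t_2)$ and $q_1>0$ on $(t_2,\infty)$, and at the larger root one has $q_1(t_2)=0$, $q_1^\prime(t_2)>0$ and $\lim_{t\to\infty}q_1(t)/t^2=h^2-1>0$. These are exactly the hypotheses of Lemma (\ref{solution lemma 1}) with $a=t_2$, so it produces a solution $g$ defined on all of $\R$, even in $u$, with $\min g=g(0)=t_2>0$ and $g(u)\to\infty$ as $|u|\to\infty$. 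Since $g$ is positive and nonconstant, it is an admissible profile for Theorem (\ref{examples2 Sm1}), and the resulting $\phi$ is a complete, boundaryless immersion of $\tilde{\Hi}_{k-1}^{n-1}\times\R$; this is the asserted closed immersion. For the completeness statement when $k=1$ the base is the genuine hyperbolic space $\Hi^{n-1}$, which is complete, the coordinate $u$ is arc length along the $\mu$-direction and ranges over all of $\R$, and $r=g/\sqrt{-c}\ge t_2/\sqrt{-c}>0$ is bounded away from zero; the induced metric has the form $du^2+r(u)^2\,g_{\Hi^{n-1}}$, which is complete because $r$ is bounded below. Thus for $k=1$ one obtains complete space-like hypersurfaces of $\Sp_1^{n+1}$.

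Finally, for the embeddedness claim when $h>1$ I would exploit that $\lambda=h+g^{-n}>0$ throughout, so $\theta$ is strictly increasing; since $g$ is even, $\theta$ is odd, and because $g(u)\sim e^{\sqrt{h^2-1}\,|u|}$ the integrand $r\lambda/(r^2+1)\sim h/r$ decays exponentially, so $\theta$ has finite limits $\pm\theta_\infty$. Injectivity of $\phi$ then reduces to an angle estimate: matching the last two space-like coordinates forces $r(u)=r(u^\prime)$ and $\theta(u)\equiv\theta(u^\prime)\ (\mathrm{mod}\ 2\pi)$, whence $u^\prime=\pm u$ by evenness of $r$, and $y=y^\prime$ from the remaining coordinates; the case $u^\prime=-u$ with $u\ne0$ would require $\theta(u)\in\pi\Z$, which is impossible once one knows $\theta_\infty\le\pi$.

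The main obstacle is therefore the quantitative bound $\theta_\infty\le\pi$, uniform in $h>1$. I would attack it through the substitution $du=dg/\sqrt{q_1(g)}$, writing
$$\theta_\infty=\int_{t_2}^\infty\frac{r\,\lambda}{(r^2+1)\,\sqrt{q_1(g)}}\,dg ,$$
and estimating this integral directly, using $q_1(g)=g^2(\lambda^2-1)-|c|$ to control the integrand. The same computation explains why embeddedness is claimed only for $h>1$: when $h<-1$ the factor $\lambda=h+g^{-n}$ changes sign along the profile, $\theta$ fails to be monotone, and the angle argument collapses, so one expects only the weaker closed (non-embedded) conclusion there.
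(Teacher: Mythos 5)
Your construction of the closed and complete examples is correct and is essentially the paper's own proof: the same count of positive critical points of $q_1$ (you locate the unique critical point via the quadratic $(h^2-1)s^2+h(2-n)s+(1-n)=0$ in $s=t^n$, the paper instead writes $v_0$ explicitly), the same choice of a sufficiently negative $c$ so that $q_1$ has exactly two positive roots with $q_1>0$ beyond the larger one $t_2(c)$, the same application of Lemma (\ref{solution lemma 1}) with $a=t_2(c)$ to get an even entire solution with $g\ge t_2(c)>0$, and the same completeness argument for $k=1$ from $g$ being bounded away from zero. One factual slip, which you share with the paper (where it appears as ``it is not difficult to show that $c_0$ is positive''): the minimum of the $c$-independent part $q_1-c$ is positive only when $h>1$; for $h<-1$ one has $(q_1-c)\bigl((-h)^{-1/n}\bigr)=-(-h)^{-2/n}<0$, so that minimum is negative. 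This is harmless --- for $h<-1$ every $c<0$ already makes $\min q_1<0$, and for $h>1$ one takes $c<-c_0$ --- but the sentence as written is false.

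The only genuine gap is the one you flag yourself: the embeddedness claim for $h>1$ is correctly reduced to the bound $\theta_\infty\le\pi$ (your reduction via $r(u)=r(u')\Rightarrow u'=\pm u$ and oddness of $\theta$ is sound), but the bound is never proved. For what it is worth, the paper's printed proof is silent on embeddedness altogether, so your proposal goes further than the original; still, as a proof of the stated theorem it is incomplete at exactly this point. Two remarks on closing it. First, the bound cannot hold for all admissible $c$: as $c$ decreases to $-c_0$ the solution lingers near $v_0$, where $\theta'$ is a fixed positive constant, so $\theta_\infty\to\infty$; embeddedness can only be claimed for suitable $c$, which is all the ``there exist solutions'' phrasing requires. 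Second, your substitution does finish the job in the limit $c\to-\infty$: writing $g=\sqrt{-c}\,\rho$, so that $r=\rho$ and $\lambda=h+(-c)^{-n/2}\rho^{-n}$, one gets
$$\theta_\infty=\int_{\rho_2}^{\infty}\frac{\rho\,\lambda\,d\rho}{(\rho^2+1)\sqrt{\rho^2\lambda^2-\rho^2-1}}\;\longrightarrow\;\int_{1/\sqrt{h^2-1}}^{\infty}\frac{h\,\rho\,d\rho}{(\rho^2+1)\sqrt{(h^2-1)\rho^2-1}}=\int_0^{\infty}\frac{h\,dw}{w^2+h^2}=\frac{\pi}{2},$$
where $\rho_2$ is the larger root of $\rho^2\lambda^2-\rho^2-1$ and the last equality uses $w=\sqrt{(h^2-1)\rho^2-1}$. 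Hence $\theta_\infty<\pi$ for all sufficiently negative $c$, and those examples are embedded, which completes the statement.
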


\begin{proof}

 We will show the theorem by showing that there exist positive values of $c$ such that the equation  (\ref{Sphere m1}) is defined in all real numbers bounded away from zero. A direct verification shows that for values of $h$ in $(-\infty,-1)\cup(1,\infty)$, the polynomial $q_1$ has exactly one positive critical point $v_0$  given by,

$$ v_0=2^{-\frac{1}{n}} (\frac{h (n-2) + \sqrt{4 - 4 n + h^2 n^2}}{h^2-1})^\frac{1}{n} $$

It follows that the function $q_1$ is decreasing from $0$ to $v_0$ and increasing from $v_0$ to $\infty$. A direct computation shows that, $q_1(v_0)=c+c_0$ where

$$ c_0= ( 2 h^2-2)^\frac{n-2}{n}\,   n\,  {\big(}h (-2 + n) + \sqrt{4 - 4 n + h^2 n^2)}{\big)}^{\frac{2-2n}{n}} (h^2 n-2 + h \sqrt{4 - 4n + h^2 n^2}) $$

It is not difficult to show that $c_0$ is positive. Taking in consideration the intervals where the function $q_1$ is increasing and decreasing, we have that, for every $c\in (-\infty,-c_0)$ the polynomial $q_1$ has exactly $2$ positive roots $t_i(c)$ such that $t_1(c)<v_0<t_2(c)$, and moreover,

$$q_1(t)>0\com{for} t>t_2(c) $$

Using the lemma (\ref{solution lemma 1}) we get that for values of $c$ in $ (-\infty,-c_0)$, any solution $g$  of the differential equation (\ref{Sphere m1}) can be extended to the whole real line, and moreover $g(t)\ge t_2(c)>0$. Therefore, using these solutions in Theorem (\ref{examples2 Sm1}) we obtain closed immersions of $\Hi_{k-1}^{n-1}\times \R$ in $\Sp_k^{n+1}$. When $k=1$, the completeness of the riemannian induced metric on $\Hi^{n-1}\times \R$ easily follows using the fact that $g$ is bounded away from zero.

\end{proof}

We also can repeat the argument used in \cite{P} to obtain the following theorem.

\begin{thm}
For any $n\ge 2$  and  for any integer $m>1$ and $h$ between the numbers

$$\cot{\frac{\pi}{m}} \com{and} \frac{ (m^2-2)\, \sqrt{(n-1)}}{n \sqrt{m^2-1}} $$

there are examples of the type (\ref{examples1 S1}) that represent embedded hypersurfaces of $\Sp_k^{n-1}\times\Sp^1$ in $\Sp_k^{n+1}$ with constant mean curvature $h$, such that
the group of isometries contains the group $O(n)\times Z_m$.

\end{thm}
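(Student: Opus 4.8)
The plan is to specialize to the family of Theorem \ref{examples1 S1}, governed by equation (\ref{Sphere 1}) with $c>0$ and $g(t)<\sqrt c$, and to produce, for each admissible $h$, a periodic profile whose angular function $\theta$ advances by exactly $2\pi/m$ over one period of $g$. First I would use Lemma \ref{solutions lemma 2} to manufacture the periodic profiles: analyzing $p_1(t)=c-t^2-t^2(h+t^{-n})^2$ one checks that $p_1\to-\infty$ both as $t\to 0^+$ and as $t\to\infty$, and that for a suitable range of $c$ the polynomial has a single interior local maximum $v_1$ with $p_1(v_1)>0$; hence $p_1$ has exactly two simple positive roots $t_1(c)<v_1<t_2(c)$ with $p_1>0$ in between, and after verifying $t_2(c)<\sqrt c$ the corresponding $g$ oscillates periodically in $[t_1,t_2]$ with $r=g/\sqrt c\in(0,1)$. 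Since $\tilde{\Sp}_k^{n-1}$ is already compact, $r$ is periodic, and $\theta'=r\lambda/(1-r^2)>0$ is strictly positive (here $\lambda=h+g^{-n}>0$ because every admissible $h$ satisfies $h>\cot(\pi/m)\ge 0$), the only obstruction to $\phi$ descending to a closed hypersurface is the angle: because $B_2$ is $2\pi$-periodic, the immersion closes onto $\tilde{\Sp}_k^{n-1}\times\Sp^1$ precisely when the increment
$$\Delta\theta(h,c)=\oint\frac{r\lambda}{1-r^2}\,du=2\int_{t_1}^{t_2}\frac{r\lambda}{(1-r^2)\sqrt{p_1(g)}}\,dg$$
equals $2\pi/m$. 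In that case, gluing $m$ translated copies of one period produces the $\Sp^1$ factor; the rotation by $2\pi/m$ in the $B_2$--$B_3$ plane combined with the one-period shift in $u$ gives the $\Z_m$ symmetry, while the isometries of the sphere factor give the $O(n)$ symmetry. Strict monotonicity of $\theta$ together with $\Delta\theta=2\pi/m<2\pi$ for $m>1$ guarantees the profile is simple, so the resulting hypersurface is embedded.

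The heart of the argument is then to show that, for each fixed $h$ in the stated interval, $\Delta\theta(h,c)$ sweeps through the value $2\pi/m$ as $c$ varies. I would compute the two limiting values of $\Delta\theta$. In the small-amplitude limit $c\to c_*^+$, where $t_1,t_2\to v_1$ and the profile collapses onto the constant (cylinder) solution $g\equiv v_1$, I would linearize (\ref{Sphere 1}) at the maximum: writing $g=v_1+\delta$ gives $(\delta')^2=p_1(v_1)-\tfrac12|p_1''(v_1)|\delta^2$, so the period tends to $T_0=2\pi/\sqrt{-\tfrac12 p_1''(v_1)}$ while $\theta'\to r_0\lambda_0/(1-r_0^2)$ becomes asymptotically constant, whence $\Delta\theta\to T_0\,r_0\lambda_0/(1-r_0^2)$. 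Carrying out the explicit computation of $v_1$, of $p_1''(v_1)$, and of the cylinder's curvature, and setting this limit equal to $2\pi/m$, should reproduce the endpoint $h=\tfrac{(m^2-2)\sqrt{n-1}}{n\sqrt{m^2-1}}$. For the opposite endpoint I would let the amplitude grow until the profile reaches the equator $r\to 1$ (equivalently $t_2\to\sqrt c$), where $1-r^2\to 0$ forces $\theta$ to accumulate and the hypersurface degenerates into a chain of $m$ round spheres meeting along the axis; identifying the contact angle of consecutive spheres with $\pi/m$ yields the remaining endpoint $h=\cot(\pi/m)$.

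Finally, I would invoke continuity. Since $\Delta\theta(h,\cdot)$ depends continuously on the parameter and takes the two computed boundary values in the two limits, the intermediate value theorem shows that for every $h$ strictly between $\cot(\pi/m)$ and $\tfrac{(m^2-2)\sqrt{n-1}}{n\sqrt{m^2-1}}$ there is a choice of $c$ with $\Delta\theta=2\pi/m$; feeding this $c$ into Theorem \ref{examples1 S1} produces an embedded closed CMC hypersurface of $\Sp_k^{n+1}$ of type $\tilde{\Sp}_k^{n-1}\times\Sp^1$ with mean curvature $h$ and symmetry group containing $O(n)\times\Z_m$. I expect the main obstacle to be the rigorous control of the two limiting angles, and in particular the large-amplitude (necklace) limit, where the integrand is singular at $r=1$: one must show both that the limit of $\Delta\theta$ exists and that it equals the value giving $\cot(\pi/m)$. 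Establishing that $\Delta\theta$ actually attains every value between these two limits (monotonicity, or at least surjectivity onto the interval) is the key quantitative step that makes the intermediate value argument go through.
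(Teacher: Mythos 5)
The paper never writes out a proof of this theorem: its entire argument is the sentence preceding the statement, which defers to the continuity argument of \cite{P}. Your proposal is a reconstruction of exactly that Delaunay-type argument --- periodic profiles from Lemma \ref{solutions lemma 2}, the angle increment $\Delta\theta$ per period of $g$, the closing/embeddedness condition $\Delta\theta=2\pi/m$ with $\theta$ strictly monotone, and an intermediate value argument between the small-amplitude (cylinder) limit and the large-amplitude (sphere-chain) limit --- so structurally you are on the intended path. Your small-amplitude analysis is also sound: $p_1$ has exactly one positive critical point $v_1$ (the substitution $s=t^{-n}$ turns $p_1'(t)=0$ into $(n-1)s^2+(n-2)hs-(1+h^2)=0$, which has a single positive root), and linearizing at the resulting constant solution gives the limit angle $2\pi/\sqrt{1+(n-1)\lambda_1^2}$, where $\lambda_1$ is the value of $\lambda$ on that cylinder; setting this equal to $2\pi/m$ reproduces $h=\frac{(m^2-2)\sqrt{n-1}}{n\sqrt{m^2-1}}$.

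There is, however, a genuine error in the step you yourself single out as the crux, namely the large-amplitude limit. In this family the profile never approaches the equator: at any root $t$ of $p_1$ one has $t^2\left(1+(h+t^{-n})^2\right)=c$, hence $r^2=t^2/c=1/\left(1+(h+t^{-n})^2\right)$, so $r_{\max}^2<1/(1+h^2)<1$ (every admissible $h$ is positive, both endpoints of the $h$-interval being nonnegative). Consequently $1-r^2$ stays bounded away from $0$, the integrand has no singularity at $r=1$, and there is no ``equator'' degeneration to analyze; a computation based on that picture would fail. The actual degeneration as $c\to\infty$ occurs at the other end of the profile: $t_1(c)\to 0$, so $r_{\min}\to 0$ and the hypersurface collapses onto a chain of umbilical spheres touching the axis $r=0$, each subtending axis angle $2\,\mathrm{arccot}(h)$; it is this computation that produces the endpoint $\cot(\pi/m)$. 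Separately, your final IVT step needs more than continuity plus the two limiting values: one must check that $2\pi/m$ lies strictly between the two limits for every $h$ strictly between the stated endpoints, which requires knowing that both limit functions, $2\pi/\sqrt{1+(n-1)\lambda_1(h)^2}$ and $2\,\mathrm{arccot}(h)$, are decreasing in $h$ (they are, but your sketch does not establish it, and which of the two endpoints is larger actually switches with $n$, so the inequality cannot be taken for granted).
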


\subsection{Closed and embedded examples with CMC in $\Hi_k^{n+1}$ }

In this section we point out that the theorems regarding embedding of hypersurfaces of the hyperbolic space proven in \cite{P} and \cite{P1}, can be extended to the semi riemannian case.

\begin{thm}

For any $n\ge 2$  and any  $h\ge0$ there are examples of the type (\ref{example5 H1}) that represent embedded hypersurfaces of $\Sp_{k-1}^{n-1}\times\R$ in $\Hi_{k-1}^{n+1}$ with constant mean curvature $h$. Moreover, if
$h>1$,  the embedded hypersurfaces admit the group $O(n)\times Z$ in its group of isometries, where $Z$ is the group of integers.
\end{thm}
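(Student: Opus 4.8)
The plan is to reduce the whole statement, exactly as in \cite{P}, to an analysis of the single function governing the ODE (\ref{Hyperbolic 1}),
$$p_2(t)=c+t^2-t^2(h+t^{-n})^2=c+(1-h^2)\,t^2-2h\,t^{2-n}-t^{2-2n}.$$
The one structural fact I would record at the outset, and which makes embeddedness almost automatic, is that $\lambda=h+g^{-n}>0$ whenever $h\ge 0$ and $g>0$; consequently $\theta^\prime=\frac{r\lambda}{r^2+1}>0$, so $\theta$ is \emph{strictly increasing} along every profile. Note also that near $t=0^+$ the term $-t^{2-2n}$ dominates and $p_2(t)\to-\infty$ for every $h$, while the behaviour at $t\to\infty$ splits precisely along the two regimes of the theorem.

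First I would construct the profiles $g$. Differentiating and clearing $t^{2n-1}$ shows that the positive critical points of $p_2$ are the positive roots of
$$(1-h^2)\,s^2+h(n-2)\,s+(n-1)=0,\qquad s=t^n.$$
For $0\le h\le 1$ every coefficient is $\ge 0$ with constant term $n-1>0$, so there is no positive root, $p_2^\prime>0$ on $(0,\infty)$, and $p_2$ increases from $-\infty$ to its limit at infinity; hence $p_2$ has a unique positive root $t_1$ with $p_2>0$ beyond it, for every admissible $c$ (when $h=1$, $n=2$ one needs $c>2$ so that $p_2\to c-2>0$). For $h>1$ the product of the roots of the quadratic is $(n-1)/(1-h^2)<0$, so there is exactly one positive critical point, a maximum; since $p_2(t_\ast)$ increases with $c$ and tends to $+\infty$, one may choose $c$ large enough that $p_2$ has two positive roots $t_1<t_2$ with $p_2>0$ in between. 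Lemma (\ref{solution lemma 1}) (applied with $\epsilon=1-h^2$ when $h<1$, and in the slightly modified linear-growth form when $h=1$) then extends $g$ to an even function on all of $\R$ with $g\to\infty$, while Lemma (\ref{solutions lemma 2}) produces, for $h>1$, a nonconstant $T$-periodic $g$ oscillating between $t_1$ and $t_2$. In all cases the $u$-parameter runs over $\R$, so Theorem (\ref{example5 H1}) yields an immersion of $\tilde{\Sp}_{k-1}^{n-1}\times\R$ into $\Hi_{k-1}^{n+1}$ with constant mean curvature $h$.

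Next I would deduce that these immersions are embeddings from the monotonicity of $\theta$. Given an image point $\phi(y,u)$, its component in the $(e_k,e_{k+1})$-plane equals $\sqrt{r^2(u)+1}\,B_2(\theta(u))$, from which the hyperbolic radius $\sqrt{r^2(u)+1}$ and the boost parameter $\theta(u)$ are read off; the radius then determines $r(u)>0$, and hence $y$ via the complementary component $r(u)\,y$. Since $\theta$ is strictly increasing it is injective, so $\theta(u_1)=\theta(u_2)$ forces $u_1=u_2$; this gives injectivity of $\phi$ even in the periodic case, where $r$ alone cannot separate the two endpoints of a period. To upgrade injectivity to an embedding I would verify properness: in the periodic case $r$ is bounded and $\theta(u)\to\pm\infty$, while in the complete case $r(u)\to\infty$ (here $\theta$ in fact stays bounded, but $r\to\infty$ still forces the image to leave every compact set), so in both situations preimages of compact sets are compact and $\phi$ is a homeomorphism onto its image.

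Finally, for $h>1$ I would exhibit the group $O(n)\times\Z$. The $O(n)$-factor is the manifest action $A\cdot\phi(y,u)=\phi(Ay,u)$ by the orthogonal group of the $n$-dimensional space $\tilde{\R}_{k-1}^n$ (a genuine $O(n)$ when $k=1$), which extends by the identity on the $(e_k,e_{k+1})$-plane to an isometry of $\Hi_{k-1}^{n+1}$. The $\Z$-factor is generated by the Lorentz boost $L_\Delta$ acting as $B_2(\theta)\mapsto B_2(\theta+\Delta)$ in the $(e_k,e_{k+1})$-plane and as the identity on $\tilde{\R}_{k-1}^n$, where $\Delta=\int_0^T\frac{r\lambda}{r^2+1}\,d\tau>0$ is the increment of $\theta$ over one period $T$. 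Since $r(u+T)=r(u)$ and $\theta(u+T)=\theta(u)+\Delta$, one checks directly that $L_\Delta(\phi(y,u))=\phi(y,u+T)$, so $L_\Delta$ preserves the hypersurface; as $\Delta\ne0$ it has infinite order (giving $\Z$), and as it fixes $\tilde{\R}_{k-1}^n$ pointwise it commutes with the $O(n)$-action. I expect the main obstacle to lie not in the algebra of $p_2$, which collapses to the one-line quadratic above, but in the two analytic endpoints: confirming for $h>1$ that a value of $c$ with the required two-positive-root configuration genuinely exists, and carrying out the properness check that promotes the injective immersion to an embedding, where the subtlety is that in the complete range $0\le h<1$ the boost parameter $\theta$ converges yet $r\to\infty$ still guarantees properness.
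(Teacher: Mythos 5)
Your proposal is correct and follows what is essentially the paper's own approach: for this theorem the paper offers no argument beyond deferring to \cite{P} and \cite{P1}, and your treatment---reducing the critical-point structure of $p_2$ to the quadratic $(1-h^2)s^2+h(n-2)s+(n-1)$ in $s=t^n$, invoking Lemmas (\ref{solution lemma 1}) and (\ref{solutions lemma 2}) to produce unbounded profiles for $0\le h\le 1$ and periodic ones for $h>1$, getting injectivity from the strict monotonicity of $\theta$, and realizing the $\Z$-factor by the boost $B_2(\theta)\mapsto B_2(\theta+\Delta)$---is precisely the scheme used in those references and in the paper's other Section 3 proofs. Two harmless inaccuracies: your parenthetical that $\theta$ stays bounded in the complete case fails at $h=1$ (where $g$ grows linearly and $\theta$ grows logarithmically), though your properness argument correctly relies only on $r\to\infty$; and for $k\ge 2$ properness also requires noting that escape to infinity in the non-compact factor $\tilde{\Sp}_{k-1}^{n-1}$ forces $\phi$ to leave every compact set, which is immediate since $r$ is bounded below by a positive constant and the $y$-component and $B_2$-component occupy disjoint coordinate slots.
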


\begin{thm}\label{case n immersion}

For any $n\ge 2$  and any  $h\ge0$ there are examples of the type (\ref{example4 H1}) that represent embedded hypersurfaces of $\Hi_{k-1}^{n-1}\times\R$ in $\Hi_{k-1}^{n+1}$ with constant mean curvature $h$. Moreover, there exists a constant $h_0(n)<-1$ that depends of $n$ such that for any  $h<h_0(n)$, there are embedded examples of the type   (\ref{example4 H1}) that induce hypersurfaces with constant mean curvature $h$ of $\Hi_{k-1}^{n-1}\times\Sp^1$ in $\Hi_{k-1}^{n+1}$.

\end{thm}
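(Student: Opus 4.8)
The construction in Theorem \ref{example4 H1} reduces the whole statement to the study of the scalar ODE (\ref{Hyperbolic 1}), so the plan is to first settle the behaviour of its solutions and then read off the topology and the embeddedness of $\phi$. Write $p_2(t)=c+t^2-t^2(h+t^{-n})^2$. Any positive non constant solution $g$ of (\ref{Hyperbolic 1}) with $c<0$ and $g>\sqrt{-c}$ yields, through Theorem \ref{example4 H1}, a cmc-$h$ hypersurface defined on $\tilde{\Hi}_{k-1}^{n-1}\times I$, where $I$ is the maximal interval of $g$. The $\R$-factor will therefore come from a solution defined on all of $\R$ and the $\Sp^1$-factor from a periodic solution; in the periodic case the immersion closes up exactly when the angle
$$\Delta\theta(c)=\oint\frac{r\lambda}{r^2-1}\,du=2\int_{t_1}^{t_2}\frac{r(t)\,\lambda(t)}{(r(t)^2-1)\sqrt{p_2(t)}}\,dt$$
swept over one period is a rational multiple of $2\pi$, where $\sqrt{-c}<t_1<t_2$ are the two roots bounding the orbit.

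For the first assertion I would carry out the root and critical-point analysis of $p_2$ exactly as in the de Sitter proofs of Section \ref{Sitter examples}. One checks that $p_2(\sqrt{-c})<0$, while $p_2(t)/t^2\to 1-h^2$, so for the admissible values of $h$ one can fix $c<0$ for which $p_2$ has a single simple root $t_*>\sqrt{-c}$ with $p_2>0$ and bounded below by a positive multiple of $t^2$ on $(t_*,\infty)$. Lemma \ref{solution lemma 1} then produces an even solution $g:\R\to[t_*,\infty)$ with $g>\sqrt{-c}$, so $\phi$ is defined on $\tilde{\Hi}_{k-1}^{n-1}\times\R$ and the induced metric is complete because $g$ stays bounded away from $0$. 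Embeddedness is the point that needs care: along this solution $r=g/\sqrt{-c}\to\infty$, hence $\theta'=r\lambda/(r^2-1)$ is of order $1/r$, so $\theta$ is monotone and converges to a finite limit as $u\to\pm\infty$; the profile $u\mapsto(r(u),\theta(u))$ is thus an embedded arc, and since the $\tilde{\Hi}_{k-1}^{n-1}$-slices lie in the orthogonal subspace $\tilde{\R}_k^n$, one concludes that $\phi$ is injective.

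For the closed examples I would work in the regime $h<-1$, where $p_2(t)\sim(1-h^2)t^2\to-\infty$ forces every admissible solution to be bounded. Choosing $c<0$ so that $p_2$ has two simple roots $\sqrt{-c}<t_1<t_2$ with $p_2>0$ on $(t_1,t_2)$, Lemma \ref{solutions lemma 2} gives a periodic $g$ oscillating between them, and by the reduction above the immersion closes into $\tilde{\Hi}_{k-1}^{n-1}\times\Sp^1$ precisely when $\Delta\theta(c)=2\pi/m$ for some integer $m$ (with the profile then sweeping total angle $2\pi$ once around the $(x_{n+1},x_{n+2})$-plane, so that it remains embedded). The heart of the proof is to control $\Delta\theta(c)$ as $c$ ranges over the admissible interval: as the orbit degenerates onto the cylinder solution $r\equiv r_0$ of (\ref{Hyperbolic 1}) (where $t_1,t_2$ coalesce at a double root $g_0$ of $p_2$), a standard small-oscillation computation gives the limiting value
$$\Delta\theta_\infty=\frac{r_0\lambda_0}{r_0^2-1}\,\frac{2\pi}{\sqrt{-\tfrac12 p_2''(g_0)}},$$
which is explicit in $h$ and $n$; at the other end of the $c$-interval $\Delta\theta$ has a second explicit limit. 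By the intermediate value theorem $\Delta\theta$ takes every value between these two, and one defines $h_0(n)$ as the threshold below which this range contains a number $2\pi/m$. Estimating the two limiting angles sharply enough to produce the explicit bound $h_0(n)<-1$ is the main obstacle, and it is exactly here that the argument of \cite{P} is reused; the remainder is the now-routine analysis of the roots of $p_2$ together with Lemmas \ref{solution lemma 1}, \ref{solutions lemma 2} and \ref{mu}.
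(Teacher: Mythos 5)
You should first know what the paper itself does here: it gives no self-contained proof of this theorem. The subsection simply observes that Theorem \ref{example4 H1} reduces everything to the ODE (\ref{Hyperbolic 1}), which is the same equation that governs the Riemannian case $k=1$, and then imports the embedding results of \cite{P} and \cite{P1}; even $h_0(n)$ is only computed numerically in \cite{P1}. Your outline (root analysis of $p_2$, Lemmas \ref{solution lemma 1} and \ref{solutions lemma 2}, the closing angle $\Delta\theta$, the degenerate-orbit limit, the intermediate value theorem) is exactly the strategy of those cited papers, so you are on the intended route; but as a standalone proof it has two genuine gaps. The first concerns the range of $h$: for $h\ge1$ and any $t>0$ one has $(h+t^{-n})^2>1$, hence $p_2(t)=c+t^2\bigl(1-(h+t^{-n})^2\bigr)<c<0$, so with $c<0$ equation (\ref{Hyperbolic 1}) has no positive solutions at all, and there are no examples of type (\ref{example4 H1}) with ODE parameter $h\ge1$. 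Your unbounded solutions therefore exist only for $0\le h<1$ (where $1-h^2>0$); the phrase ``for the admissible values of $h$'' hides this, and the assertion for $h\ge1$ --- which can only be reached by flipping the orientation, i.e.\ passing to parameter $-h\le-1$, whose solutions are all periodic --- is simply not addressed by your argument.

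The second gap is embeddedness. For the unbounded solutions, monotonicity and convergence of $\theta$ do not make the profile an embedded arc: since $g$ is even, $\rho=\sqrt{r^2-1}$ is even and $\theta$ is odd, so the two ends of the spiral cross precisely when $\theta$ attains the value $\pi$; you must therefore bound the total angle $\int_0^\infty\frac{r\lambda}{r^2-1}\,du$ by $\pi$ (for instance by taking $c$ close to $0^-$, where this integral tends to $0$), and nothing in your proposal does this. In the periodic case the problem is worse. Writing the closing angle as $\Delta\theta(c)=2\int_{t_1}^{t_2}\frac{\sqrt{-c}\,g\,\lambda(g)}{(g^2+c)\sqrt{p_2(g)}}\,dg$, one sees that $\Delta\theta(c)\to0$ as $c\to0^-$ (the factor $\sqrt{-c}$ tends to $0$ while $\int_{t_1}^{t_2}dg/\sqrt{p_2(g)}$ stays bounded, the limiting roots being simple), so for \emph{every} $h<-1$ the range of $\Delta\theta$ contains numbers of the form $2\pi/m$; your definition of $h_0(n)$ as ``the threshold below which this range contains a number $2\pi/m$'' would then give $h_0(n)=-1$, not a constant strictly less than $-1$. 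What breaks down is that for $c$ near $0^-$ the function $\lambda=h+g^{-n}$ changes sign along the orbit ($\lambda\to1$ at the inner root and $\lambda\to-1$ at the outer one), so $\theta$ is not monotone there and the implication ``$\Delta\theta=2\pi/m$ implies embedded'' fails: the $m$ arcs of the profile need no longer stay in disjoint angular sectors. The true threshold $h_0(n)$ arises from demanding simultaneously that $\Delta\theta=2\pi/m$ and that $\lambda$ be sign-definite along the orbit, and this quantitative step --- the actual content of the corresponding theorem in \cite{P1}, on which the paper itself is leaning --- is missing from your argument.
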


\begin{rem}
In \cite{P1} we provide a way  to compute $h_0(n)$ numerically.
\end{rem}


\subsection{Closed and embedded examples with CMC in $\R_k^{n+1}$ }

\begin{thm}
For any $h\ne0$ and any $c<0$, there exist solutions of the equation (\ref{Euclidean m1}) such that the immersions given in Theorem (\ref{example4 E1}) define closed immersions. For $k=1$, these immersions define complete space like immersions in the Minkowski space.
\end{thm}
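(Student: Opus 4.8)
The plan is to produce, for the given $h\ne0$ and $c<0$, a solution $g$ of (\ref{Euclidean m1}) that is defined on all of $\R$ and stays bounded away from zero; substituting such a $g$ into Theorem (\ref{example4 E1}) then yields an immersion whose $u$-parameter runs over the whole real line, which is the sense in which it is closed. First I would study the function $q_3(t)=c+t^2(h+t^{-n})^2=c+(th+t^{1-n})^2$ on $(0,\infty)$. Since $n\ge2$ and $h\ne0$, one has $q_3(t)\to+\infty$ both as $t\to0^+$ (where $t^{2-2n}$ dominates) and as $t\to\infty$ (where $h^2t^2$ dominates), and moreover $q_3(t)/t^2\to h^2>0$. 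The object of this step is to locate the largest positive root of $q_3$.

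Next I would show that $q_3$ has a simple positive root $t_2$ with $q_3'(t_2)>0$ and $q_3>0$ on $(t_2,\infty)$. Writing $q_3'=2\psi\psi'$ with $\psi(t)=th+t^{1-n}$ and $\psi'(t)=h-(n-1)t^{-n}$ organizes the analysis. When $h<0$ this is immediate: $\psi$ vanishes at $t_0=(-1/h)^{1/n}$ and $\psi'<0$ throughout, so $q_3$ strictly decreases and then strictly increases, with unique minimum $q_3(t_0)=c<0$; hence there are exactly two simple roots $t_1<t_0<t_2$, and the larger one satisfies $q_3'(t_2)>0$. When $h>0$ the bump $\psi^2$ is strictly positive with minimum $\psi(t_{\min})^2$ at $t_{\min}=((n-1)/h)^{1/n}$, so the same conclusion requires $c<-\psi(t_{\min})^2$; verifying that the prescribed $c<0$ actually produces a simple root with $q_3'(t_2)>0$ is the delicate point of the argument and is the main obstacle.

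With $t_2$ in hand I would invoke Lemma (\ref{solution lemma 1}) with $a=t_2$ and $\epsilon=h^2$: its hypotheses $q_3(t_2)=0$, $q_3'(t_2)>0$, $q_3>0$ on $(t_2,\infty)$, and $q_3(t)/t^2\to h^2>0$ all hold by the previous step. The lemma then furnishes an even function $g:\R\to[t_2,\infty)$ solving $(g')^2=q_3(g)$, globally defined and satisfying $g\ge t_2>0$. Feeding this $g$ into Theorem (\ref{example4 E1}) produces the desired closed immersion of $\tilde{\Hi}_{k-1}^{n-1}\times\R$: the profile never degenerates, since $g$ is bounded below away from $0$, and the $u$-interval is the whole line.

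Finally, for $k=1$ I would check completeness of the induced (Riemannian) metric. From the identity $(r')^2-r^2\lambda^2=-1$ recorded in the proof of Theorem (\ref{example4 E1}), together with $\langle y,y\rangle=-1$ and $\langle B_2,B_2\rangle=1$, one gets $\langle\partial_u\phi,\partial_u\phi\rangle=1$, and the base and fiber directions are orthogonal, so the induced metric is the warped product $du^2+r(u)^2\,d\sigma^2$, where $d\sigma^2$ is the complete metric of $\Hi^{n-1}$ and $r=g/\sqrt{-c}\ge t_2/\sqrt{-c}>0$. Because the base $\R$ and the fiber $\Hi^{n-1}$ are complete and $r$ is bounded below by a positive constant, every divergent curve has infinite length, so the metric is complete. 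Thus the entire argument reduces to the root analysis of $q_3$, and the only genuinely subtle point is controlling the minimum of $\psi^2=(th+t^{1-n})^2$ against $c$, i.e. handling the sign of $h$.
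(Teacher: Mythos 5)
Your overall route is the same as the paper's: analyze the positive roots of $q_3$, apply Lemma \ref{solution lemma 1} at the largest root $a=t_2$ to obtain an even solution $g$ defined on all of $\R$ with $g\ge t_2>0$, and then deduce completeness for $k=1$ from the warped-product form $du^2+r(u)^2\,d\sigma^2$ of the induced metric (your completeness argument is correct, and is exactly the ``similar arguments'' the paper leaves implicit). For $h<0$ your root analysis is complete and correct: $\psi(t)=ht+t^{1-n}$ is strictly decreasing with a unique zero $t_0=(-1/h)^{1/n}$, so $q_3=c+\psi^2$ attains its minimum value $c<0$ at $t_0$, has exactly two simple roots $t_1<t_0<t_2$, and $q_3'(t_2)>0$. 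Up to this point you have proved the statement for every $h<0$ and every $c<0$, in the same way the paper does.

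The $h>0$ case, which you flag as ``the delicate point'' and ``the main obstacle'', is not a step you failed to finish: it is a genuine counterexample, both to your plan and to the theorem as stated, and it exposes an error in the paper's own proof. The paper's assertion that ``since $c<0$, $q_3$ has exactly two positive roots'' tacitly uses that the minimum value of $q_3$ equals $c$, which holds precisely when $\psi$ vanishes somewhere on $(0,\infty)$, i.e. when $h<0$. For $h>0$ one has $\psi>0$ with $\min_{t>0}\psi=n\,t_*^{1-n}$ attained at $t_*=((n-1)/h)^{1/n}$, so whenever $-n^2t_*^{2-2n}<c<0$ the function $q_3$ is strictly positive on all of $(0,\infty)$ and has no positive roots whatsoever. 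In that range every positive solution of (\ref{Euclidean m1}) is strictly monotone (its derivative can never vanish) and its maximal interval of positivity has a finite endpoint at which $g\to 0$, since $\int_0 dg/\sqrt{q_3(g)}\sim\int_0 g^{n-1}\,dg<\infty$; hence no solution is defined on all of $\R$ and bounded away from zero, and the corresponding immersions of Theorem \ref{example4 E1} are neither closed nor complete. So the theorem, quantified ``for any $h\ne0$ and any $c<0$'', is false for $h>0$ with $|c|$ small; the correct hypotheses are $h<0$ (your completed case), or $h>0$ together with $c<-n^2t_*^{2-2n}$. Your instinct that the sign of $h$ is the crux was exactly right, and no additional argument will remove that restriction.
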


\begin{proof}

The proof follows the same type of arguments as the previous ones. In this case, since $h\ne0$, the polynomial $q_3$ given in (\ref{Euclidean m1}) satisfies that,

$$\lim_{t\to\infty}q_3(t)=\lim_{t\to 0}q_3(t)=\infty\com{and only has one positive critical point}$$

Since $c<0$, $q_3$ has exactly two positive roots $t_1(c)<t_2(c)$ and $q_3(t)>0$ for values of $t>t_2(c)$. Therefore, by lemma (\ref{solution lemma 1}), there exists an even solution defined in all the real numbers with global minimum equal to  $t_2(c)$. Similar arguments as those in the previous theorems complete the proof of the theorem.

\end{proof}

The same proof for theorem (7.2) in \cite{P} generalizes to the following result,

\begin{thm}
For  any  real number  $h$  and any positive $c$ the solutions of (\ref{Euclidean 1}) are periodic. Moreover, the immersions given in example (\ref{example1 E1}) are closed immersions of $\Sp_k^k\times \R$ in $\R_k^{n+1}$, and the immersions given in example (\ref{example2 E1}) are closed immersions of  $\Hi_k^{n-1}\times\R$ in $\R_k^{n+1}$. Additionally, when $h>0$, both types of  examples are embedded.

\end{thm}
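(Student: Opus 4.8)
The plan is to reuse the template of the previous completeness arguments: first extract the qualitative shape of the defining polynomial of (\ref{Euclidean 1}), deduce periodicity from Lemma (\ref{solutions lemma 2}), and then read off closedness and, when $h>0$, embeddedness directly from the explicit formula for $\phi$.

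First I would write the right-hand side of (\ref{Euclidean 1}) in the factored form
$$p_3(t)=c-u(t)^2,\com{where}u(t)=ht+t^{1-n}=t\,(h+t^{-n}),$$
so that the positivity set of $p_3$ is exactly $\{\,u^2<c\,\}$ and, since $u(t)\to+\infty$ as $t\to0^+$ (here $n\ge2$), this set is automatically bounded away from the origin. The critical points of $p_3$ are governed by $p_3'=-2\,u\,u'$ with $u'(t)=h-(n-1)t^{-n}$, and I would split into the two essential cases. For $h>0$ the function $u$ is positive on $(0,\infty)$ and convex with a single minimum at $t_\ast=\big((n-1)/h\big)^{1/n}$, so $p_3$ is a single hump tending to $-\infty$ at both ends. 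For $h<0$ the function $u$ is strictly decreasing from $+\infty$ to $-\infty$, vanishing exactly once, so $p_3$ is again a single hump whose maximum value is precisely $c>0$. In either case, whenever $p_3$ takes positive values its positivity set is a single bounded interval $(t_1,t_2)$ with $0<t_1<t_2<\infty$, $p_3(t_1)=p_3(t_2)=0$, $p_3'(t_1)>0$ and $p_3'(t_2)<0$.

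With this shape in hand, Lemma (\ref{solutions lemma 2}) applies verbatim and shows that every non-constant positive solution $g$ of (\ref{Euclidean 1}) is $T$-periodic and oscillates between $t_1$ and $t_2$; in particular $g$ is bounded above and bounded away from zero. (For $h<0$ such a solution exists for every $c>0$, since the hump of $p_3$ has height $c$; for $h>0$ it exists once $c$ exceeds $u(t_\ast)^2$, the borderline value producing the constant solution, i.e. the round cylinder.) Because Lemma (\ref{solutions lemma 2}) delivers $g$ as a function on all of $\R$, the maps of Theorems (\ref{example1 E1}) and (\ref{example2 E1}) are then defined on the full products $\tilde\Sp_k^{n-1}\times\R$ and $\tilde\Hi_{k-2}^{n-1}\times\R$ rather than on a bounded $u$-interval; this is exactly the assertion that they are closed immersions.

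Finally, for the embeddedness when $h>0$, the key observation is that the $B_2$-component of $\phi(y,u)=r(u)\,y+R(u)\,B_2$ equals $R(u)=\int_0^u r\lambda\,d\tau$, because in both examples $y$ is orthogonal to the fixed vector $B_2$ and $B_2$ has a single nonzero entry. Since $h>0$ and $g>0$ force $\lambda=h+g^{-n}>0$, we obtain $R'=r\lambda>0$, so $R:\R\to\R$ is a strictly increasing bijection. Hence $\phi(y_1,u_1)=\phi(y_2,u_2)$ first forces $R(u_1)=R(u_2)$, so $u_1=u_2$, and then $r(u_1)\,y_1=r(u_2)\,y_2$ with $r>0$ gives $y_1=y_2$; thus $\phi$ is injective and the immersion is embedded. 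I expect the one genuinely careful step to be the case analysis pinning down the single-hump shape of $p_3$ uniformly in $h$ (together with the observation that the positivity interval stays bounded away from $0$ and $\infty$); once that is settled, periodicity, closedness and the monotonicity proof of embeddedness are all immediate. Note that the hypothesis $h>0$ is essential in the last step: for $h\le 0$ the sign of $\lambda$ is no longer controlled, $R$ ceases to be monotone, and the examples become the self-intersecting nodoid-type immersions, which is why embeddedness is claimed only for $h>0$.
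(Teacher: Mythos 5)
Your argument is sound where it applies, and it is in essence the argument the paper intends: the paper itself gives no details for this theorem (it only remarks that the proof of Theorem 7.2 of \cite{P} generalizes), and your combination of (i) the single-hump analysis of $p_3$, (ii) Lemma (\ref{solutions lemma 2}) for periodicity, and (iii) strict monotonicity of the height function $R(u)=\int_0^u r\lambda\,d\tau$ for embeddedness is exactly the template the paper uses in its other closedness proofs in Section 3. Your factorization $p_3=c-u^2$ with $u(t)=ht+t^{1-n}$, the endpoint checks $p_3'(t_1)>0$, $p_3'(t_2)<0$ via $p_3'=-2uu'$, and the observation that for $h>0$ non-constant solutions require $c>u(t_*)^2$ are all correct; the injectivity argument is also complete, and since $\lambda\ge h>0$ and $r$ is bounded below by $t_1/\sqrt{c}$, the function $R$ is in fact proper, which upgrades injectivity to a genuine embedding.

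The gap is the case $h=0$, which the statement includes (``for any real number $h$'') but which your ``two essential cases'' silently omit --- and it cannot be repaired, because the claim is false there. For $h=0$ one has $p_3(t)=c-t^{2-2n}$, which is strictly increasing on $(0,\infty)$ with $\lim_{t\to\infty}p_3(t)=c>0$; the positivity set is the unbounded interval $(c^{-1/(2n-2)},\infty)$, so Lemma (\ref{solutions lemma 2}) does not apply (nor does Lemma (\ref{solution lemma 1}), since $p_3(t)/t^2\to 0$), and the non-constant solutions of (\ref{Euclidean 1}) increase monotonically to infinity with $g'\to\sqrt{c}$: these are the catenoid-type minimal examples, which are not periodic. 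So your proof actually establishes the theorem for all $h\neq 0$, and simultaneously exposes that the statement (and the paper's appeal to \cite{P}, whose spherical ODE has an extra $-t^2$ term forcing the hump shape for every $h$) is too generous at $h=0$. You should make this exclusion explicit rather than leave the case unaddressed: as written, a reader cannot tell whether you overlooked $h=0$ or deliberately excluded it.
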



\end{document}